\let\hat\widehat
\newtheorem{theorem}{Theorem}
\newtheorem{lemma}{Lemma}
\newtheorem{corollary}{Corollary}
\newtheorem{proposition}{Proposition}
\newtheorem{remark}{Remark}
\newtheorem{fact}{Fact}
\newtheorem{definition}{Definition}
\DeclareMathOperator*{\argmin}{arg\,min}
\newcommand{\nat}{\theta}
\newcommand{\Nat}{\Theta}
\newcommand{\mep}{\mu}
\newcommand{\Mep}{M}
\newcommand{\LR}{\mathrm{L}}
\newcommand{\GLR}{\mathrm{GL}}
\newcommand{\KL}{\mathrm{KL}}
\newcommand{\D}{D_{\psi^*_{\mep_0}}}
\newcommand{\high}{\mathrm{high}}
\newcommand{\C}{\mathrm{C}}
\newcommand{\CI}{\mathrm{CI}}
\newcommand{\ML}{\mathrm{ML}}
\newcommand{\DM}{\mathrm{DM}}
\newcommand{\nmin}{n_{\min}}
\newcommand{\nminbar}{\bar{n}_{\min}}
\newcommand{\nmax}{n_{\max}}
\newcommand{\n}{n}
\newcommand{\ii}{i}
\newcommand{\st}{N}
\title{Nonparametric iterated-logarithm extensions of the sequential generalized likelihood ratio test}
\author{%
	Jaehyeok Shin, Aaditya Ramdas and Alessandro Rinaldo \\
	Carnegie Mellon University \\
	\texttt{\{shinjaehyeok, aramdas, arinaldo\}@cmu.edu}
}
\begin{document}
	\maketitle
	\begin{abstract}
We develop a nonparametric extension of the sequential generalized likelihood ratio (GLR) test and corresponding time-uniform confidence sequences for the mean of a univariate distribution. By utilizing a  geometric interpretation of the GLR statistic, we derive a simple analytic upper bound on the probability that it exceeds any prespecified boundary; these are intractable to approximate via simulations due to infinite horizon of the tests and the composite nonparametric nulls under consideration. Using time-uniform boundary-crossing inequalities, we carry out a unified nonasymptotic analysis of expected sample sizes of one-sided and open-ended tests over nonparametric classes of distributions (including sub-Gaussian, sub-exponential, sub-gamma, and exponential families). Finally, we present a flexible and practical method to construct time-uniform confidence sequences that are easily tunable to be uniformly close to the pointwise Chernoff bound over any target time interval.  
\end{abstract}

\section{Introduction} \label{sec::intro}

Consider the following setup for open-ended sequential testing: we observe i.i.d.\ data sequentially from an infinite stream $X_1,X_2,\dots$ generated by an unknown distribution $P$ over the real line with finite first moment and belonging to large nonparametric class of distributions. We wish to test a one-sided hypothesis about its mean $\mep := \int x \mathrm{d}P(x)$ by deciding, at each time point, whether to reject the null hypothesis or instead to continue sampling, possibly indefinitely. 
With a slight abuse of notation, we denote with $\mathbb{P}_{\mep}$ and $\mathbb{E}_{\mep}$ the probability and expectation when the mean of the  data generating distribution is equal to $\mep$. (In many parametric models, one can safely assume that there is a one to one map between $\mathbb{P}_{\mep}$ and $\mep$ and, therefore, that the mean value parametrization is well-defined, but in the nonparametric settings considered in this paper there could be more than one distribution with the same mean $\mu$; as discussed in Remark~\ref{rem:PmuEmu} below, this does not affect the validity of our results.) 
For a fixed $\alpha \in (0,1)$, we are concerned with developing level $\alpha$ open-ended tests ---also known as tests with power one \cite{robbins1970statistical}--- for the  one-sided hypothesis problem about $\mu$ of the form
\begin{equation} \label{eq::one_sided_hypo}
      H_0 : \mep \leq \mep_0 ~~\text{vs.}~~ H_1: \mep > \mep_1,  \text{ for some $\mep_1 \geq \mep_0$. }
\end{equation}
Formally, a level $\alpha$ open-ended test consists of a stopping time $\st$ with respect to the natural filtration generated by the data,  which satisfies the constraints
\begin{equation} \label{eq::constraint}
\begin{aligned}
\text{Bounded type-1 error: }\quad    \mathbb{P}_{\mep}\left(\st < \infty\right) &\leq \alpha,~~\text{if }~ \mep \leq \mep_0, \\
\text{Asymptotically power one: } \quad    \mathbb{P}_{\mep}\left(\st < \infty\right) & = 1 ,~~\text{if }~ \mep  > \mep_1.
\end{aligned}    
\end{equation}
At each time $n$, we either stop and reject the null if $N = n$ or  continue sampling if $N>n$. In particular, when $\mu \leq \mu_0$, we never stop with probability at least $1-\alpha$. The case of $\mu_0=\mu_1$ in which there is no separation between the null and alternative hypothesis will be of special interest.

The possibility of sampling indefinitely is characteristic of tests of power one~\cite{robbins1974expected}, and stems from allowing for arbitrarily small signal strengths (or, equivalently of no separation between the null and the alternative). This might initially be viewed as an undesirable property. However, open-ended tests are typically adaptive to the underlying signal strength and will stop early when $\mu$ is much larger than $\mu_1$. Indeed, open-ended tests not only have practical applications such as post-marketing drug and vaccine safety surveillance \cite{kulldorff2011maximized} but also serve as building blocks of more complicated sequential analyses. For instance, if we want to relax the power one constraint to a level $\beta \in (0,1)$ of type-2 error control, we can introduce another stopping time $M$ corresponding to a level $\beta$ open-ended test with swapped null and alternative hypotheses. Then, the minimum of the two stopping times $N$ and $M$ can be used as a sequential testing procedure that simultaneously controls type-1 and type-2 error at level $\alpha$ and $\beta$, respectively. In this case, with probability $1$ the procedure will stop in finite time under both the null and alternative hypotheses. 

 The expected sample size $\mathbb{E}_{\mep}N$ under an alternative distribution with mean $\mep > \mep_1$ is a traditional and widely used measure to quantify the performance of a sequential testing procedure satisfying the error constraints in \eqref{eq::constraint}; see, e.g.,  \cite{wald1948optimum, farrell1964asymptotic,robbins1974expected, tartakovsky.book}. In particular, the smaller the expected time to rejection under the alternative, the better the test. In parametric settings, if the testing problem is simple,
i.e., if it takes the form  
\begin{equation} \label{eq::simple_hypo}
    H_0 : \mep = \mep_0 ~~\text{vs}~~ H_1: \mep = \mep_1
\end{equation}
for some $\mu_0 \neq \mu_1$,
then the optimal testing procedure  is the sequential probability ratio test (SPRT), originally put forward by Wald~\cite{wald1945sequential} and further studied by Wald and Wolfowitz~\cite{wald1948optimum}. In detail, based on observations $X_1, X_2, \dots$, the one-sided SPRT is defined by the stopping time 
\begin{equation}
    \st := \inf\left\{ \n \geq 1: \log \LR_\n(\mep_1,\mep_0) \geq A \right\},
\end{equation}
where  $A \geq 0$ is an appropriately chosen threshold and $\LR_\n(\mep_1,\mep_0)$ is the likelihood ratio (LR) statistic given by
\begin{equation}
    \LR_\n(\mep_1,\mep_0) := \prod_{\ii=1}^\n \frac{p_{\mep_1}(X_\ii)}{p_{\mep_0}(X_\ii)}.
\end{equation}
Here $p_{\mep_1}$ and $p_{\mep_0}$ are the probability densities functions (with respect to a common dominating measure) of the data generating distributions under the alternative and null hypothesis,  respectively. If the threshold $A$ is chosen to satisfy the constraint in \eqref{eq::constraint}, then the one-sided SPRT is optimal in the sense that it minimizes $\mathbb{E}_{\mep_1} \st$, the expected sample size for a rejection under the alternative, among all test satisfying the error constraints; see, e.g.,  \cite{wald1948optimum, chow1971great}. 

More generally, one may wish to design a sequential test that satisfies the error constraints \eqref{eq::constraint} while nearly minimizing the expected sample size  uniformly over many possible null and alternatives distributions. However, it is well known that in this composite setting the SPRT does not yield such a guarantee~\cite{kiefer1957some, lai1988nearly}.

A natural way to extend the SPRT to accommodate composite null and alternative hypotheses in parametric settings is to use the generalized likelihood ratio (GLR) statistic:
\begin{equation}
 \GLR_\n(\mep_1,\mep_0) :=  \frac{\sup_{\mep  > \mep_1~\text{or}~\mep \leq \mep_0} \prod_{\ii=1}^\n  p_{\mep} (X_\ii)}{\sup_{\mep \leq \mep_0} \prod_{\ii=1}^\n  p_{\mep} (X_\ii)}.
\end{equation}
(Note that the above GLR statistic cannot be smaller than one by definition.) The one-sided sequential GLR (SGLR) test can then be defined by the stopping time
\begin{equation}
    \st_g := \inf\left\{ \n \geq 1:  \log \GLR_\n(\mep_1, \mep_0) \geq g_\alpha(\n) \right\},
\end{equation}
where $ g_\alpha : \mathbb{N} \to [0, \infty)$ is a {\it boundary function,} appropriately chosen to ensure that the error constraints in~\eqref{eq::constraint}, namely $\sup_{\mep \leq \mep_0}\mathbb{P}_\mep (N_g < \infty) \leq \alpha$ and $\inf_{\mep > \mep_1} \mathbb{P}_\mep (N_g < \infty) = 1$, are fulfilled.  
 
For exponential families, Farrell~\cite{farrell1964asymptotic} derived a sharp asymptotic lower bound on the expected sample size of any composite sequential test satisfying the constraint~\eqref{eq::constraint} in the moderate confidence regime in which the testing level $\alpha$ is fixed but $\mu_1$ approaches $\mu_0$.  The author further proposed a procedure  to threshold the GLR statistic that attains this lower bound in the limit as the gap  $|\mep_1 - \mep_0|\to0$, but did not  provide an explicit boundary function $g_\alpha$. Lorden \cite{lorden1973open} obtained an explicit boundary and nonasymptotic bounds on the testing errors and the expected sample sizes for well-separated alternatives ($\mep_1 > \mep_0$); see \cref{subSec::well-separated}.

\begin{remark}
Throughout the paper, we will follow the convention from the sequential analysis literature of using the term asymptotic to describe a vanishing separation between the null and the alternative hypotheses or a vanishing value of $\alpha$.  Accordingly, we will say that a result holds nonasymptotically when it holds for all finite values of $\mu_0$, $\mu_1$ or $\alpha$ and not only in the limit.
\end{remark}

In the more challenging non-separated case ($\mep_1 = \mep_0$),  virtually all of the existing SGLR tests are designed under parametric assumptions. In this case, one may hope to calibrate a boundary to a desired level $\alpha$ using simulations, but this is a non-trivial task for one-sided, open-ended tests because the type-1 error guarantee must hold for an infinite time horizon. Further, in nonparametric settings, the choice of which distribution to use is itself not obvious. Our analytic boundaries solve these issues. Also, most older works only deliver asymptotic analyses of  error bounds and of  expected sample sizes in the high-confidence regime where $\alpha \to 0$; see, e.g., \cite{chernoff1961sequential, lai1988nearly}. In contrast, our boundaries allow for a thorough nonasymptotic analysis in this setting.

It is also important to point out that the recent literature on best-arm identification  has produced several time-uniform law of iterated logarithm (``finite LIL'') bounds that allow for nonasymptotic analyses of type-1 error bounds with explicit boundary functions; see \cite{jamieson_lil_2014, zhao2016adaptive, howard2021time, kaufmann2018mixture, garivier2021nonasymptotic}. However, most existing boundaries are of a rigid form and are difficult to tune to be as tight as possible over any target time interval. Further, the expected sample sizes of corresponding testing procedures have been studied mostly in the asymptotic framework of the high confidence regime in which $\alpha \to 0$ but $\mep$  is fixed. As a result, nonasymptotic expected sample size analyses for the ``moderate confidence regime'' (fixed $\alpha$, $\mu \to \mu_0$) are not yet thoroughly studied beyond the (sub-)Gaussian case. In this paper, we bring to bear and sharpen tools from this line of work and apply them in novel ways to the problem of designing SGLR tests. 

Below we outline our main contributions, which advance both the theory and practice of sequential testing based on the GLR statistic. A technical summary of our results can be also found in \cref{tab::summary} in the Discussion \cref{sec:discussion}.

\begin{enumerate}
    \item We design new SGLR tests that satisfy the error constraints in \eqref{eq::constraint} with an explicit boundary function that are applicable in nonparametric settings in which a likelihood function is not available. Specifically, we present a unified analysis of sequential testing for sub-Gaussian, sub-exponential, and exponential family distributions (among others) via a new geometric interpretation of GLR statistics. 
    \item We derive novel nonasymptotic bounds on the sample size for the SGLR tests  that hold both in expectation and with high probability and are valid under any alternative. Though our results apply to nonparametric families of distributions, the bounds match in rate the known lower bounds for exponential family distributions in the moderate confidence regime  where $\alpha$ is fixed and $|\mep_1 - \mep_0| \to 0$.  
    \item Leveraging the duality between sequential tests and confidence sequences~\cite[Section 6]{howard2021time}, we develop a flexible method to construct confidence sequences which can be easily tuned to be uniformly close to the fixed-sample Chernoff bound on prespecified time intervals. 
\end{enumerate}

The rest of the paper is organized as follows. In \cref{sec::sub-psi}, we introduce the sub-$\psi_{\Mep}$ family of distributions, a nonparametric generalization of the exponential family which includes sub-Gaussian, sub-exponential, and exponential family distributions as special cases. \cref{sec::GLR-like test} presents the sequential GLR-like (SGLR-like) test, which is a nonparametric counterpart of the SGLR test for the sub-$\psi_{\Mep}$ family of distributions. We then derive nonasymptotic bounds on expected sample sizes, which demonstrate that the proposed SGLR-like test can detect the alternative signal in a sample efficient way. In \cref{sec::CS}, we introduce a flexible method to build anytime-valid confidence sequences that can be tuned to be close to the pointwise Chernoff bound on target time intervals. We conclude with a brief summary of our contribution and discussions on  future directions. In the interest of space, we defer  proofs and simulations to the supplement. 

\section{GLR statistic for the exponential family and its nonparametric extension } \label{sec::sub-psi}
Before presenting our main results in full generality and in order to build some intuition for our results, we first review the GLR statistic in the standard setting of exponential families of distributions. Consider a natural exponential family of distributions with densities of the form 
\begin{equation} \label{eq::EF_density}
    p_{\nat}(x) = \exp\left\{\nat x - B(\nat) \right\}, ~~\nat \in \Nat \subset \mathbb{R},
\end{equation}
with respect to a reference Borel measure $\nu$ on the real line,  where $\Nat \subset \left\{\nat \in \mathbb{R} :  \int e^{\nat x } \nu(\mathrm{d}x) < \infty \right\}$ is the natural parameter space and $B \colon \Nat \rightarrow \mathbb{R}$ is a strictly convex function given by $\nat \mapsto B(\theta) = \int e^{\nat x } \nu(\mathrm{d}x)$.  Throughout, we assume  $\Nat$ to be nonempty and open. For each natural parameter $\nat \in \Nat$, let $\mep = \mep(\theta)$ be the corresponding mean parameter
\[
\mep = \int x p_{\nat}(x) \nu(\mathrm{d}x). 
\]
It is well known \cite{barndorff2014information} that, under the stated assumptions, there is a one-to-one correspondence between natural and mean parameters via $\mep = \nabla B (\nat)$, where throughout the manuscript, for a differentiable univariate function $f$, $\nabla f$ will refer to its derivative function. Therefore, we can reparameterize the  exponential family based on the mean parameter space $M := \left\{ \nabla B(\nat) : \nat \in \Nat \right\}$, so that,  each point $\mu \in M$ will uniquely identify the density $p_\mu := p_{(\nabla B)^{-1}(\mu)}$.

When the samples $X_1,X_2, \ldots$ are i.i.d. from a distribution in the family, for any $\mep_0, \mep_1 \in \Mep$, the likelihood ratio (LR) statistic based on first $\n$ samples is defined by
\begin{equation}\label{eq:LR}
    \LR_{\n}(\mep_1, \mep_0):= \prod_{\ii=1}^\n  \frac{p_{\mep_1} (X_\ii)}{p_{\mep_0} (X_\ii)}.
\end{equation}
For fixed choices of $\mep_0$ and $\mep_1$, it is well-known that the normalized log LR statistics can be expressed as a function of the sample mean $\bar{X}_{\n} = \frac{1}{n}  \sum_{i=1}^n X_i $ in two equivalent forms as follows:
\begin{align}
\frac{1}{n}\log \LR_{\n}(\mep_1, \mep_0) &= \KL\left(\bar{X}_{\n}, \mep_0 \right) - \KL \left(\bar{X}_{\n}, \mep_1 \right) \label{eq::LR_diff_form} \\
& = \KL\left(\mep_1, \mep_0\right) + \nabla_z \KL\left(z, \mep_0\right)|_{z= \mep_1} \left(\bar{X}_{\n} - \mep_1\right) \label{eq::LR_tangent_line_form},
\end{align}
where $\KL (z_1, z_2)$ is the Kullback-Leibler (KL) divergence from $p_{z_2}$ to $p_{z_1}$, for $z_1, z_2 \in \Mep$. For the completeness, we derive the above identities in Appendix~\ref{appen:facts_on_sub_psi}. From the first expression in \eqref{eq::LR_diff_form}, we see that the normalized LR statistic is equal to the difference between the KL divergences from the distribution in the family parametrized by the sample mean $\bar{X}_{\n}$ to the one corresponding to the null and alternative hypotheses. Perhaps more importantly for our derivations below, the second expression in \eqref{eq::LR_tangent_line_form} shows that the normalized log LR statistic, as a function of $\bar{X}_{\n}$, is also the tangent line to the KL divergence function $z \mapsto \KL(z,\mep_0)$ at $z = \mep_1$.  See \cref{fig::GLR_stat} for an illustration.

Now, recall that we are concerned with one-sided composite testing problem 
\begin{equation} \label{eq::GLR_hypo}
    H_0: \mep \leq \mep_0 ~~\text{vs}~~H_1: \mep > \mep_1,
\end{equation}
where $\mep_1 \geq \mep_0 \in \Mep$. From the expression of the LR statistic in \eqref{eq::LR_diff_form}, it can be easily shown that the corresponding GLR statistic is given by 
\[
\GLR_n(\mep_1, \mep_0) = \sup_{z > \mep_1} \LR_\n (z, \mep_0) \vee 1.
\]
Using the alternative expression of the LR statistic in \eqref{eq::LR_tangent_line_form}, we  conclude that the normalized log GLR statistic can be written as $\frac{1}{\n}\log\GLR_\n(\mep_1, \mep_0)  := f(\bar{X}_{\n};\mep_1,\mep_0)$, where
\begin{equation} \label{eq::GLR_fn}
\begin{aligned}
f(z ; \mep_1, \mep_0):=
    \begin{cases} 
    \left[\KL(\mep_1,\mep_0) + \nabla_z \KL(z, \mep_0)|_{z= \mep_1}(z -\mep_1)  \right]_+  & \mbox{if } z\leq \mep_1 \\
    \KL(z, \mep_0) &\mbox{if } z> \mep_1
    \end{cases}.
\end{aligned}
\end{equation}

\begin{figure}
    \begin{center}
    \includegraphics[scale =  0.75]{./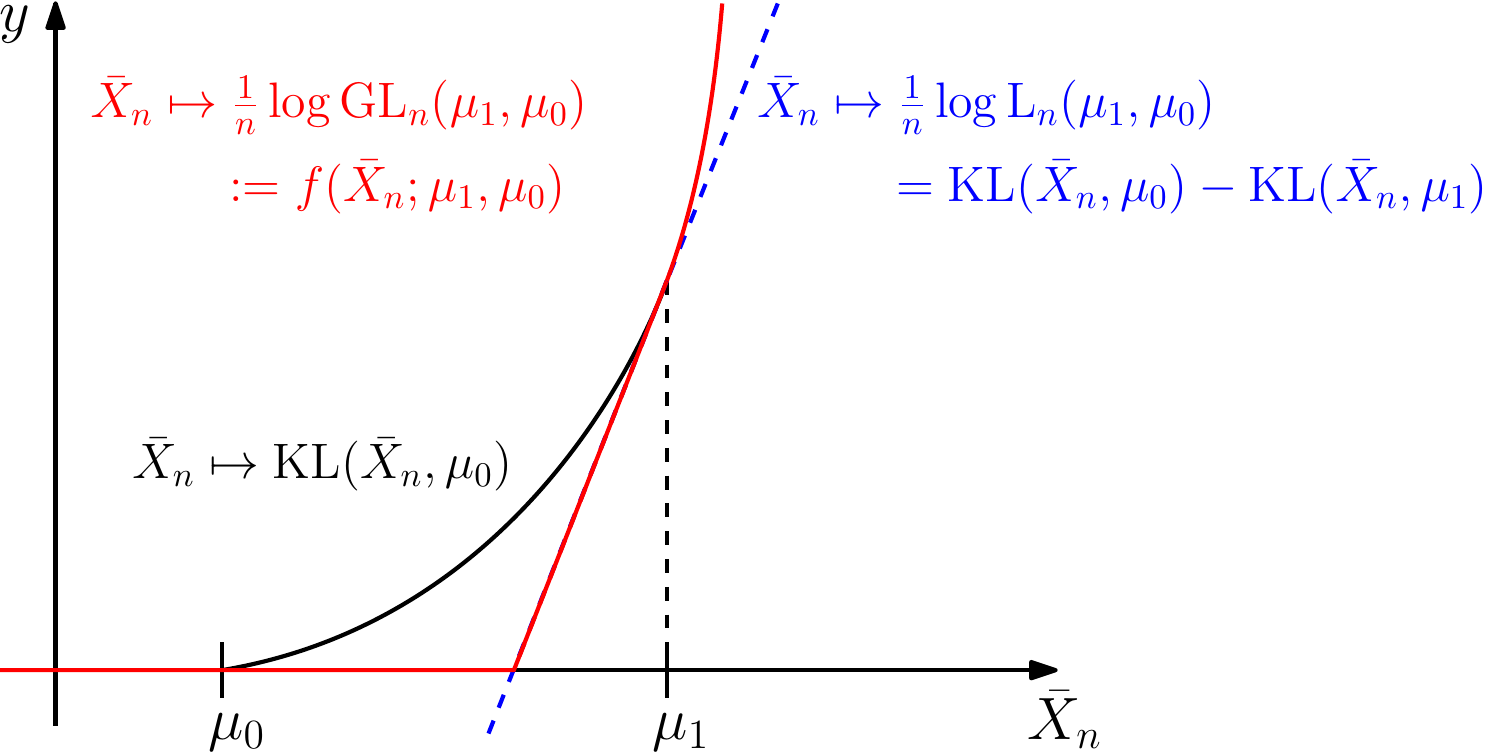}
    \end{center}
    \caption{ Illustration of normalized log LR and GLR statistics for exponential family distributions.  The dashed blue line corresponds to the normalized log LR statistic as a function of the sample mean $\bar{X}_{\n}$ which is tangent to the KL divergence function at $\mep_1$. The red line shows the normalized log GLR statistic which is equal to the KL divergence for $\bar{X}_{\n} > \mep_1$ and its ``clipped'' tangent line for $\bar{X}_{\n} \leq \mep_1$, respectively. } 
    \label{fig::GLR_stat}
\end{figure}

\cref{fig::GLR_stat} depicts the normalized log LR (dashed blue line) and GLR statistics (red line) based on the first $n$ samples, as functions of  $\bar{X}_{\n}$.
In particular, the normalized log GLR statistic is equal to the KL divergence between $p_{\bar{X}_{\n}}$ and $p_{\mep_0}$ when $\bar{X}_{\n} > \mep_1$ and to its tangent line at $\mep_1$, clipped at zero, otherwise.

The above expression for the normalized GLR statistic and its simple but revealing geometric interpretation provide the conceptual underpinning and intuition for much of the contributions made in this article.
In the next subsection, we introduce a class of distributions, called sub-$\psi$ distributions \cite{howard2020time, shin2019bias} that exhibit analogous GLR statistic for the testing problem at hand and thus  can be viewed as a natural nonparametric generalization of the exponential family distributions.

\subsection{Extensions to the \texorpdfstring{sub-$\psi_{\Mep}$}{sub-psi} family distributions} \label{subSec::def_sub-psi}
In this section we will assume throughout some familiarity with basic concepts from convex analysis; see, e.g.,  \cite{rockafellar1997convex}. Let $M$ be an open, convex subset of $\mathbb{R}$ such that, for each $\mep \in M$, there exists  an extended real-valued convex function $\psi_{\mep}$ that is finite and strictly convex on a common closed supporting set $\Lambda \subset \mathbb{R}$ and differentiable on its nonempty interior $\Lambda^{\mathrm{o}}$ containing $0$, with $\psi_{\mep}(0) = 0$ and $\nabla\psi_{\mep}(0) = \mep$. We also assume that, for each $\mep \in M$, the the convex conjugate of $\psi_{\mep}$, denoted with  $\psi_{\mep}^*$, is finite and differentiable on $\Mep$.

For any $\mep \in M$, a collection $\mathcal{P}$ of probability distributions over the real line is said to be a {\it sub-$\psi_{\mep}$ family} if, for each $P \in \mathcal{P}$,  $\mathrm{supp}(P) \subset \overline{\Mep}$, $\mathbb{E}_{X\sim P}[X] = \mep$ and 
\begin{equation}\label{eq:psi_mu}
\log \mathbb{E}_{X \sim P}\left[ e^{\lambda X}\right] \leq \psi_{\mep}(\lambda),~~\forall \lambda \in \Lambda.
\end{equation}
We will denote such class with $\mathcal{P}_{\psi_{\mep}}$. Notice that, for each  $\mep \in M$, $\mathcal{P}_{\psi_{\mep}}$ is a nonparametric statistical model and the set $M$ plays the role of all possible mean parameters of interest. Finally, we will write
\[
\mathcal{P}_{\psi_{\Mep}}:=\bigcup_{\mep \in \Mep} \mathcal{P}_{\psi_{\mep}}
\] 
for the collection of all sub-$\psi_{\mep}$ families as $\mu$ ranges in $M$, which we will then refer to as a {\it sub-$\psi_{\Mep}$ family of  distributions.}

We will further require that a sub-$\psi_{\Mep}$ family satisfies the following \emph{order-preserving} property for the Bregman divergences arising from the conjugate functions $\psi^*_{\mep}$. 

\begin{definition}
For each $\mep \in \Mep$, let $D_{\psi_{\mep}^*}(\cdot, \cdot)$ be the Bregman divergence with respect to $\psi_{\mep}^*$. We say a sub-$\psi_{\Mep}$ family of distributions has an order-preserving class of Bregman divergences  if 
\begin{equation}
    D_{\psi^*_{z_0}}(\mep_1, z_0) \geq     D_{\psi^*_{\mep_0}}(\mep_1, \mep_0)
\end{equation}
holds for any $z_0, \mep_0, \mep_1 \in \Mep$ such that  $\mep_1 \leq \mep_0 \leq z_0$ or $z_0 \leq \mep_0 \leq \mep_1$.
\end{definition}
At a high-level, the order-preserving property  implies that the Bregman divergence expresses a natural ordering of a sub-$\psi_{\Mep}$ family  with respect to the mean parametrization. This ordering is naturally suited to handle the hypothesis testing problem we are studying.  As an important special case, sub-Gaussian distributions with a common  variance parameter $\sigma^2$ form a sub-$\psi_{\Mep}$ family with an order-preserving class of Bregman divergence where $\Mep = \Lambda = \mathbb{R}, \psi_{\mep}(\lambda) = \lambda \mep + \frac{\sigma^2}{2} \lambda^2$ and $\D(\mep_1,\mep_0) = \frac{1}{2\sigma^2}(\mep_1 -\mep_0)^2$. Another important example of a sub-$\psi_{\Mep}$ family of distributions with an order-preserving class of the Bregman divergences is the family of Bernoulli distributions where we have $\Mep = (0,1), \Lambda = \mathbb{R}, \psi_{\mep}(\lambda) = \log\left(1-\mep + \mep e^\lambda\right)$ and $\D(\mep_1,\mep_0) = \KL(\mep_1,\mep_0) = \mep_1 \log\left(\frac{\mep_1}{\mep_0}\right) + (1-\mep_1) \log\left(\frac{1-\mep_1}{1-\mep_0}\right)$. These two examples are representative distributions belonging to the following  two large nonparametric classes of  distributions:
\begin{enumerate}
    \item {\bf Additive sub-$\psi$ distributions.} A sub-$\psi_{\Mep}$ family is said to be additive if for any $\mep \in M$,
    \begin{equation}
 \psi_{\mep}(\lambda) - \lambda \mep = \psi_0(\lambda) := \psi(\lambda)~~~ \text{ for all } \lambda \in \mathbb{R}.
 \end{equation}
It can be checked that sub-Gaussian and sub-exponential distributions are instances of additive sub-$\psi$ distributions. For each additive sub-$\psi$ distribution, the Bregman divergence can be expressed as $\D(\mep_1, \mep_0) = \psi^*(\mep_1 - \mep_0)$ for each $\mep_1, \mep_0 \in \Mep$.

\item {\bf Exponential family-like (EF-like) sub-$B$ distributions.}  A sub-$\psi_{\Mep}$ family of distributions is called an EF-like sub-$B$ family if there exists an extended real-valued convex function $B$ which is finite, strictly convex  and differentiable on $\Lambda := (\nabla B)^{-1}(\Mep)$ such that, for each $\mep \in \Mep$,
    \begin{equation}
 \psi_{\mep}(\lambda)  = B(\lambda + \theta_\mep) - B(\theta_\mep),~\forall\lambda \in \mathbb{R},
 \end{equation}
 where $\theta_\mep := (\nabla B)^{-1}(\mep)$. 
All exponential family distributions and sub-Gaussian distributions are instances of EF-like sub-$B$ distributions. In fact, for exponential family of distributions, it is immediate to see that each $\psi_{\mep}$ is the logarithm of the moment generating function. For each EF-like sub-$B$ distribution, the Bregman divergence can be written as  $\D(\mep_1, \mep_0) = D_{B^*}(\mep_1. \mep_0)$, which, for a class of exponential families with densities satisfying \eqref{eq::EF_density} is equal to the dual of the KL divergence; see, e.g., \cite{JMLR:v6:banerjee05b}. 
\end{enumerate} 
From the expressions of the Bregman divergence described above, it follows that all additive sub-$\psi$ and EF-like sub-$B$ families have classes of Bregman divergences satisfying the order-preserving property. For completeness, this fact and related properties of sub-$\psi_\Mep$ family of  distributions are proven in Appendix~\ref{appen:facts_on_sub_psi}.

We now describe how to construct LR- and GLR-like statistics for the nonparametric class of distributions $\mathcal{P}_{M}$ in ways that mirror exactly the derivation of the LR and GLR statistics in exponential families, as described in the previous section. 
We will assume throughout that  $X_1, X_2, \dots$ is a sequence of independent, though not necessarily identically distributed, random variables with the same but unknown finite mean $\mep$, each drawn from a distribution belonging to a common   sub-class $\mathcal{P}_{\psi_{\mep}}$ of a sub-$\psi_\Mep$ family of distributions. \begin{remark}\label{rem:PmuEmu}
Slightly overloading notation, we denote with $\mathbb{P}_{\mep}$ and $\mathbb{E}_{\mep}$ the probability and expectation for the stochastic process $(X_\n)_{\n \in \mathbb{N}}$. That is, every statement written with respect to $\mathbb{P}_{\mep}$ and $\mathbb{E}_{\mep}$ refers to the case where each independent observation of the underlying stochastic process $(X_\n)_{\n \in \mathbb{N}}$ has a distribution in $\mathcal{P}_{\psi_{\mep}}$ with the same mean $\mep$.  We emphasize that, because of the nonparametric nature of our models, the observations need not be identically distributed. Further, since there could be more than one distribution with mean $\mep$, every statement related to $\mathbb{P}_{\mep}$ and $\mathbb{E}_{\mep}$ should be understood as a reference to any possible sub-$\psi_{\mep} $ distribution of   $(X_\n)_{\n \in \mathbb{N}}$. 
\end{remark}

 Now, consider the following test for the mean:
\begin{equation}
H_0 : \mep = \mep_0~~\text{vs}~~H_1 : \mep = \mep_1,
\end{equation}
for some $\mep_1, \mep_0 \in \Mep$. Let $\lambda_1 := \nabla \psi_{\mep_0}^{*}(\mep_1)$, and define
\begin{equation} \label{eq::LR-like_def}
       \LR_{\n}(\mep_1, \mep_0) := \exp\left\{\n\left[ \lambda_1\bar{X}_{\n} - \psi_{\mep_0}(\lambda_1)\right]\right\}.
\end{equation}
The above expression has the same form of the likelihood ration statistics \eqref{eq:LR} for parametric exponential families. Thus, with a slight abuse of notation, we refer to $\LR_{\n}(\mep_1, \mep_0)$ as the {\it LR-like statistic} for the above simple hypothesis testing based on first $\n$ samples.

Just like in the case of an exponential family, for a sub-$\psi_{\Mep}$ family of distributions,  the normalized log LR-like statistics can be re-written as
\begin{align}
\frac{1}{n}\log \LR_{\n}(\mep_1, \mep_0) &= \D\left(\bar{X}_{\n}, \mep_0 \right) - \D \left(\bar{X}_{\n}, \mep_1 \right) \label{eq::LR-like_diff_form} \\
& = \D\left(\mep_1, \mep_0\right) + \nabla_z \D\left(z, \mep_0\right)|_{z= \mep_1} \left(\bar{X}_{\n} - \mep_1\right) \label{eq::LR-like_tangent_line_form},
\end{align}
where $D_{\psi_{\mep_0}^*} (z_1, z_2)$ is the Bregman divergence from $z_2$ to $z_1$ with respect to $\psi_{\mep_0}^*$ for  $z_1, z_2 \in \Mep$. See Appendix~\ref{appen:facts_on_sub_psi} for details. Thus, similarly to the exponential family case, the normalized LR-like statistic is equal to the difference between two Bregman divergences, one from each hypothesis, to the sample mean. Also, from the second expression in \eqref{eq::LR-like_tangent_line_form}, we can check that the normalized log LR-like statistic is given by the tangent line to the Bregman divergence function $z \mapsto \D(z,\mep_0)$ at $z = \mep_1$. This is not a coincidence. Recall that every exponential family distribution is an EF-like sub-$B$ distribution. In this case, the corresponding LR and LR-like statistics are equal to each other since, for any $\mep_0, \mep_1 \in \Mep$, 
\begin{equation} \label{eq::LR-like_LR_equiv}
 D_{\psi_{z}^*}(\mep_1, \mep_0)  =  D_{B^*}(\mep_1, \mep_0)= \KL(\mep_1, \mep_0),~~\forall z \in \Mep.
\end{equation}
Finally, based on the definition of the LR-like statistic, for the one-sided testing problem 
\[
H_0: \mep \leq \mep_0 ~~\text{vs}~~H_1: \mep > \mep_1,
\]
with $\mep_1 \geq \mep_0 \in \Mep$, the {\it GLR-like statistic} is defined by
\begin{equation} \label{eq::GLR-like_def}
    \GLR_\n (\mep_1, \mep_0) := \sup_{z > \mep_1}  \LR_\n (z,\mep_0) \vee 1.
\end{equation}
From the expressions of LR statistic in \eqref{eq::LR-like_tangent_line_form}, we can derive a closed form expression for the normalized log GLR-like statistic  $\frac{1}{\n}\log\GLR_\n(\mep_1, \mep_0)  := f(\bar{X}_{\n};\mep_1,\mep_0)$, where
\begin{equation} \label{eq::GLR-like_fn}
	f(z ; \mep_1, \mep_0):=
	\begin{cases} 
		\left[\D(\mep_1,\mep_0) + \nabla_z \D(z, \mep_0)\mid_{z = \mep_1}(z -\mep_1)  \right]_+  & \mbox{if } z\leq \mep_1 \\
		\D(z, \mep_0) &\mbox{if } z >\mep_1
	\end{cases}. 
\end{equation}
This expression matches exactly the one for the normalized GLR statistics in exponential families given in \eqref{eq::GLR_fn} and in fact recovers it as a special case since for exponential families the Bregman divergence correspond to the KL divergence.
\begin{figure}
    \begin{center}
    \includegraphics[scale =  0.75]{./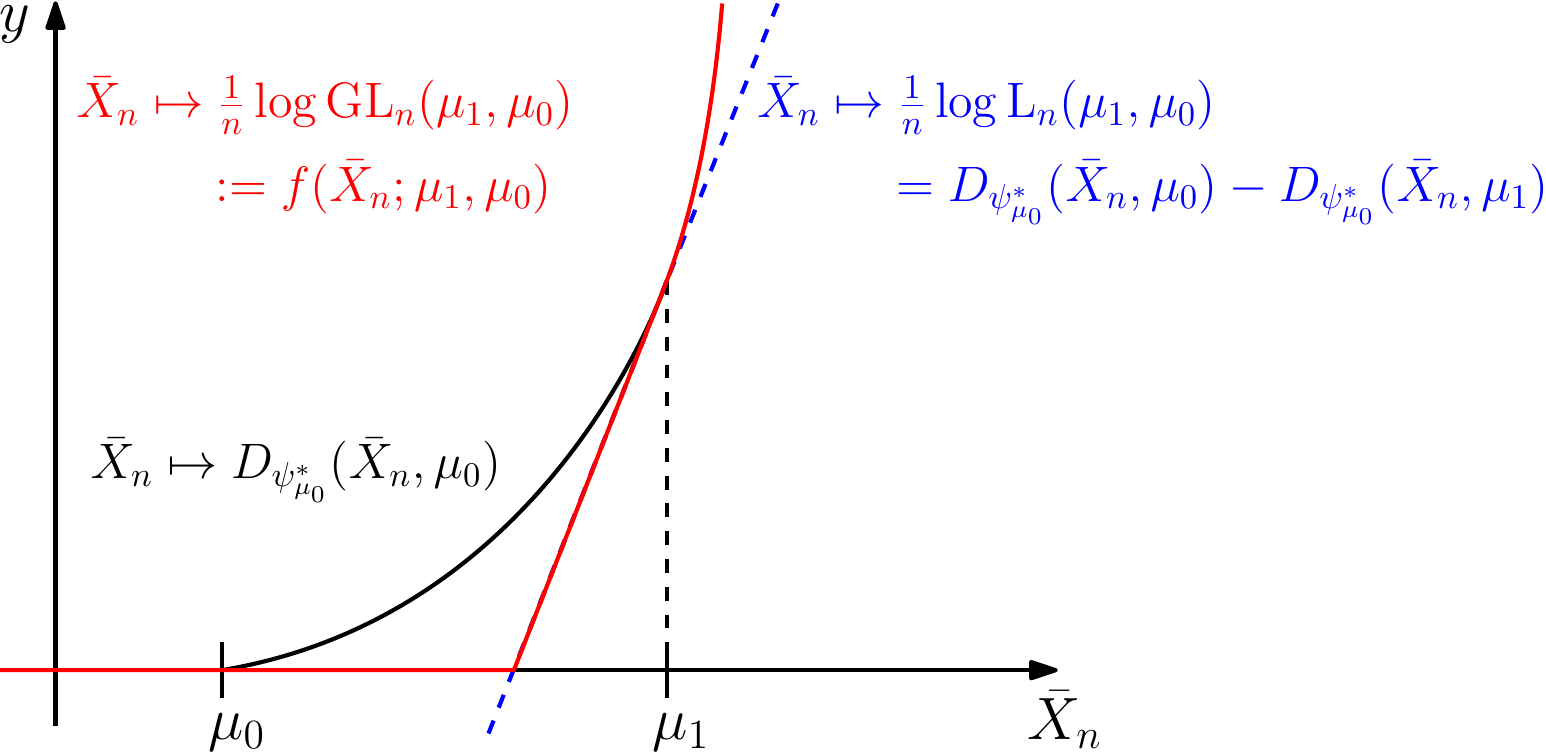}
    \end{center}
    \caption{ Illustration of normalized log LR-like and GLR-like statistics for the sub-$\psi_{\Mep}$ family of distributions. The dashed blue line corresponds to the normalized log LR-like statistic as a function of the sample mean $\bar{X}_{\n}$ which is tangent to the Bregman divergence function at $\mep_1$. The red line shows the normalized log GLR-like statistic which is equal to the Bregman divergence for $\bar{X}_{\n} > \mep_1$ and its ``clipped'' tangent line for $\bar{X}_{\n} \leq \mep_1$, respectively.} 
    \label{fig::GLR-like_stat}
\end{figure}

See \cref{fig::GLR-like_stat} for an illustration of the relationship between normalized log LR-like and GLR-like statistics based on first $n$ samples which is identical to the exponential family case in \cref{fig::GLR_stat}. The dashed blue line corresponds to the normalized log LR-like statistic as a function of the sample mean $\bar{X}_{\n}$ which is tangent to the Bregman divergence function at $\mep_1$. The red line shows the normalized log GLR-like statistic which is equal to the Bregman divergence for $\bar{X}_{\n} > \mep_1$ and its ``clipped'' tangent line for $\bar{X}_{\n} \leq \mep_1$, respectively. Since both LR- and GLR-like statistics depend only on the sample mean $\bar{X}_n$ and not on the entire history, we can update both test statistics (at each step) using constant time and memory. This property makes it possible to run sequential tests and confidence sequences introduced below in a fully online fashion.

\section{SGLR-like test for \texorpdfstring{sub-$\psi_{\Mep}$}{sub-psi} family distributions} \label{sec::GLR-like test}

\noindent We now describe sequential tests based on the GLR-like statistics for one-sided hypotheses
\begin{equation} \label{eq::GLR_hypo-sub-psi}
    H_0: \mep \leq \mep_0 ~~\text{vs}~~H_1: \mep > \mep_1,
\end{equation}
for some fixed $\mep_1 \geq \mep_0 \in \Mep$. 

Consider the settings described in the previous section and let $\alpha \in (0, 1)$ be fixed. A level $\alpha$ SGLR-like test is defined by a stopping time  of the form
\begin{equation} \label{eq::GLR_one_sided_g}
    \st_{\GLR} := \inf\left\{\n \geq 1:  \log\GLR_\n(\mep_1, \mep_0) \geq g_\alpha(\n)\right\},
\end{equation}
where $g_\alpha$ is a given, positive function  on $[1,\infty)$. In particular, if $\mep_1 = \mep_0$ then 
the SGLR-like test can be simplified to
\begin{equation}\label{eq::GLR_one_sided_indiff}
    \st_{\GLR} = \inf\left\{\n \geq 1:  \bar{X}_{\n} \geq \mep_0, n \D(\bar{X}_{\n}, \mep_0) \geq g_\alpha(\n)\right\}.
\end{equation}

At each time point, we check if the stopping criteria are met and, if so, we reject the null hypothesis. In general, by the law of large numbers, if $g_\alpha(n)/n \to 0$ as $n \to \infty$, then the power one guarantee  $\mathbb{P}_{\mep}(\st_{\GLR} <\infty) = 1$ can be easily satisfied for each $\mep >\mep_1$. 
However, it is a nontrivial task to design a proper boundary function $g_\alpha$ satisfying the  type-1 error control
 \begin{equation} \label{eq::type1err_control}
    \sup_{\mep \leq \mep_0}\mathbb{P}_{\mep} \left(\st_{\GLR} < \infty \right)
  \leq \alpha.
 \end{equation}
To tackle this challenge, we develop a general methodology to  bound the boundary crossing probability of the event in \eqref{eq::GLR_one_sided_g}. This result plays a key role in building the SGLR-like test. 

\begin{theorem}\label{thm::boundary_crossing_GLR}
  Let the boundary function $g: [1,\infty) \to [0,\infty)$ satisfy the following conditions:
    \begin{enumerate}
        \item $g$ is nonnegative and nondecreasing;
        \item the mapping $t \mapsto g(t) / t$ is nonincreasing on $[1,\infty)$ and $\lim_{t\to \infty} g(t) /t = 0$. 
    \end{enumerate}
    Then, for any sub-$\psi_{\Mep}$ family of distributions, the crossing probability under the null is such that
    \begin{align} \label{eq::GLR_bound_thm1}
    &\sup_{\mep \leq \mep_0}\mathbb{P}_{\mep} \left(\exists\n \geq 1:   \log\GLR_\n(\mep_1, \mep_0) \geq g(\n)\right) \\
    \label{eq::GLR_bound_thm1.2}
    &\quad \quad \quad \quad \quad \quad \leq \begin{cases} e^{-g(1)} & \mbox{if } \D(\mep_1,\mep_0) \geq g(1) ~, \\
    	\inf_{\eta >1} \sum_{k=1}^{K_\eta}\exp\left\{-g(\eta^k) / \eta\right\} & \mbox{otherwise } ~,
    \end{cases} 
\end{align}
where, for any $\eta > 1$,  $K_{\eta} \in \mathbb{N}\cup \{0, \infty\}$ is defined  by
\begin{equation} \label{eq::K_number}
K_{\eta}:= \inf\left\{k \in \{0\}\cup \mathbb{N}: \D(\mep_1,\mep_0) \geq \frac{g(\eta^k)}{\eta^{k}}\right\}.
\end{equation}
 \end{theorem}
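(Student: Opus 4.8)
The plan is to reduce the boundary-crossing event for the GLR-like statistic to the crossing of a finite collection of nonnegative supermartingales indexed by a geometric grid of times, and then to control each crossing by Ville's maximal inequality.

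\textbf{Step 1 (reduction to the sample mean and the driving supermartingales).} Since $\frac1\n \log\GLR_\n(\mep_1,\mep_0) = f(\bar{X}_\n;\mep_1,\mep_0)$ by \eqref{eq::GLR-like_fn}, the event in \eqref{eq::GLR_bound_thm1} equals $\{\exists\, \n \ge 1 : f(\bar{X}_\n;\mep_1,\mep_0) \ge g(\n)/\n\}$. For each $z \ge \mep_0$ I put $\lambda_z := \nabla\psi^*_{\mep_0}(z) \ge 0$, so that $\LR_\n(z,\mep_0) = \exp\{\n[\lambda_z \bar{X}_\n - \psi_{\mep_0}(\lambda_z)]\}$; using $\psi_{\mep}(\lambda) \le \psi_{\mep_0}(\lambda)$ for $\lambda \ge 0$ whenever $\mep \le \mep_0$ (a consequence of the order-preserving structure, established in Appendix~\ref{appen:facts_on_sub_psi}), $\LR_\n(z,\mep_0)$ is, under any $\mathbb{P}_\mep$ with $\mep\le\mep_0$, a nonnegative supermartingale started at $1$, so Ville's inequality gives $\mathbb{P}_\mep(\exists\, \n: \log\LR_\n(z,\mep_0) \ge a) \le e^{-a}$. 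By \eqref{eq::LR-like_tangent_line_form}, $\frac1\n\log\LR_\n(z,\mep_0)$ is exactly the tangent line to $\D(\cdot,\mep_0)$ at $z$ evaluated at $\bar{X}_\n$, and this geometric fact drives everything.

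\textbf{Step 2 (geometric peeling on the ``hard'' epochs).} Fix $\eta>1$ and treat the ``otherwise'' case $\D(\mep_1,\mep_0)<g(1)$ (equivalently $K_\eta\ge1$). I partition $[1,\infty)$ into epochs $[\eta^{k-1},\eta^k)$ for $k=1,\dots,K_\eta-1$ together with a final tail epoch $[\eta^{K_\eta-1},\infty)$, and union-bound over them. On an epoch with $k<K_\eta$, the definition of $K_\eta$ and the monotonicity of $t\mapsto g(t)/t$ give $g(\n)/\n \ge g(\eta^k)/\eta^k > \D(\mep_1,\mep_0)$ for every $\n$ in the epoch, so a crossing forces $\bar{X}_\n>\mep_1$ and hence $\D(\bar{X}_\n,\mep_0)=f(\bar{X}_\n) \ge g(\eta^k)/\eta^k$. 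Choosing $z_k>\mep_1$ with $\D(z_k,\mep_0)=g(\eta^k)/\eta^k$, monotonicity of $\D(\cdot,\mep_0)$ forces $\bar{X}_\n\ge z_k$; since the tangent to $\D(\cdot,\mep_0)$ at $z_k$ has nonnegative slope $\lambda_{z_k}$ and value $\D(z_k,\mep_0)$ at $z_k$, evaluating it at $\bar{X}_\n\ge z_k$ and multiplying by $\n\ge\eta^{k-1}$ yields $\log\LR_\n(z_k,\mep_0)\ge \eta^{k-1}g(\eta^k)/\eta^k = g(\eta^k)/\eta$. Ville's inequality bounds this epoch's contribution by $e^{-g(\eta^k)/\eta}$.

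\textbf{Step 3 (the tail epoch, the crux).} On the tail $\n\ge\eta^{K_\eta-1}$ the boundary $g(\n)/\n$ crosses the threshold $\D(\mep_1,\mep_0)$, so no single $z_k$-tangent captures all crossings; instead I use the one supermartingale $\LR_\n(\mep_1,\mep_0)$ at $\lambda_1=\nabla\psi^*_{\mep_0}(\mep_1)$. I claim every crossing with $\n\ge\eta^{K_\eta-1}$ gives $\log\LR_\n(\mep_1,\mep_0)=\n\,[\lambda_1\bar{X}_\n-\psi_{\mep_0}(\lambda_1)]\ge g(\eta^{K_\eta})/\eta$. If $\bar{X}_\n\le\mep_1$, then $f(\bar{X}_\n)$ equals this tangent, so a crossing yields $\n[\lambda_1\bar{X}_\n-\psi_{\mep_0}(\lambda_1)]\ge g(\n)\ge g(\eta^{K_\eta-1})\ge g(\eta^{K_\eta})/\eta$ by monotonicity of $g$ and of $g(t)/t$. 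If $\bar{X}_\n>\mep_1$, the tangent at $\bar{X}_\n$ exceeds its value at $\mep_1$, namely $\D(\mep_1,\mep_0)$, so $\n[\lambda_1\bar{X}_\n-\psi_{\mep_0}(\lambda_1)]\ge \eta^{K_\eta-1}\D(\mep_1,\mep_0)\ge \eta^{K_\eta-1}g(\eta^{K_\eta})/\eta^{K_\eta}=g(\eta^{K_\eta})/\eta$, using $\D(\mep_1,\mep_0)\ge g(\eta^{K_\eta})/\eta^{K_\eta}$ from the definition of $K_\eta$. Ville's inequality bounds the tail by $e^{-g(\eta^{K_\eta})/\eta}$; summing over $k=1,\dots,K_\eta$ and taking $\inf_{\eta>1}$ gives \eqref{eq::GLR_bound_thm1.2}. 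The separated case $\D(\mep_1,\mep_0)\ge g(1)$ (where $K_\eta=0$) is simpler: then $g(\n)/\n\le g(1)\le\D(\mep_1,\mep_0)$ for all $\n$, every crossing is of the tangent type, and applying the tail argument with $\mep_1$ over all $\n\ge1$ yields directly the bound $e^{-g(1)}$.

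I expect the tail epoch of Step 3 to be the main obstacle, since there the curved boundary $\n\mapsto g(\n)/\n$ passes from above to below the separation $\D(\mep_1,\mep_0)$, so crossings can occur in both the Bregman ($\bar{X}_\n>\mep_1$) and clipped ($\bar{X}_\n\le\mep_1$) regions of $f$, and one must check that the single tangent direction $\lambda_1$ dominates the boundary in both regions simultaneously at the common level $g(\eta^{K_\eta})/\eta$. The remaining points are routine: the supermartingale property under the composite null (the inequality $\psi_\mep\le\psi_{\mep_0}$ on $\lambda\ge0$) and the bookkeeping that the chosen epochs cover $[1,\infty)$.
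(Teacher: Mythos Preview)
Your argument is correct and follows essentially the same stitching/peeling approach as the paper: geometric epochs $[\eta^{k-1},\eta^k)$, a tangent point $z_k$ on each, and Ville's inequality for the associated $\LR_\n(z_k,\mep_0)$, with the tail epoch handled by the single supermartingale at $\mep_1$. The only packaging difference is that the paper routes the final step through an auxiliary lemma comparing the stopping times $N_{\DM}(\eta)\le N_{\ML}(\eta)\le N_{\GLR}$ and then applies Ville once to the discrete mixture $\sum_k h_k\,\LR_\n(z_k,\mep_0)$, whereas you apply Ville epoch-by-epoch and union-bound; both yield the identical sum $\sum_{k=1}^{K_\eta} e^{-g(\eta^k)/\eta}$.
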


The second boundary value for the probability of the curve-crossing event in the expression \eqref{eq::GLR_bound_thm1.2} is obtained using a technique known as ``stitching" or ``peeling" (see, e.g., \cite{howard2021time})  that is designed to derive uniform probabilistic guarantees over an infinite time horizon. Roughly speaking, the time course is divided into geometrically spaced time epochs, and in each of them, a line crossing inequality is derived.  The final bound is obtained by appropriately stitching together these separate linear boundaries. The parameter $\eta>1$ is the ratio of adjacent ``intrinsic time'' (accumulated variance) epochs used in the stitching process. A smaller value of $\eta$ yields a better approximation of the curve-crossing events but requires controlling a larger number (namely, $K_\eta$) of line-crossing events. The optimal choice of $\eta$ minimizing the bound achieves the optimal balance for this trade-off. The infimum over $\eta > 1$ in the bound explicitly shows how to choose the best  $\eta$.
The stitching construction is only required when $\D(\mep_1,\mep_0) \geq g(1)$. In the other case, it is sufficient to use a single line-crossing inequality. See Appendix~\ref{appen::proofs_of_thms} for a detailed explanation and a formal proof.

  \begin{remark}
In the above theorem, we follow the convention that $\inf \emptyset = \infty$. From the two conditions assumed for the boundary function $g$, we can check that $K_\eta = 0$ if and only if $\D(\mep_1,\mep_0) \geq g(1)$ and $K_\eta = \infty$ if and only if $\mep_1 = \mep_0$ for each $\eta > 1$. 
 \end{remark} 
{
 \begin{remark} \label{remark::thm1_for_process}
 If the mapping $\mep \to \psi_{\mep}(\lambda)$ is concave for each fixed $\lambda$, it is possible to extend \cref{thm::boundary_crossing_GLR} to a broader class of stochastic processes. In detail, let $(X_i)_{i \in \mathbb{N}}$ be a real-valued process adapted to an underlying filtration $(\mathcal{F}_i)_{i \in \{0\}\cup\mathbb{N}}$ such that, conditioned on $\mathcal{F}_{i-1}$, each $X_i$ follows a sub-$\psi_{\mep^i}$ distribution with $\mep^i := \mathbb{E}\left[X_i \mid \mathcal{F}_{i-1}\right]$. That is, for each $i \in \mathbb{N}$ and $\lambda \in \mathbb{R}$, 
\begin{equation} \label{eq::sub-psi_process}
    \log \mathbb{E}\left[ e^{\lambda X_i } \mid \mathcal{F}_{i-1}\right] \leq \psi_{\mep^i}(\lambda)  ~~a.s.
\end{equation}
Let $\mathcal{P}_{\mep_0}$ be the set of probability distributions of $(X_i)_{i \in \mathbb{N}}$ such that $\frac{1}{\n}\sum_{i=1}^\n \mep^i := \bar{\mep}_\n \leq \mep_0$ for all $\n$. Then, it is possible to show that
  \begin{align}
	&\sup_{P \in \mathcal{P}_{\mep_0}} P \left(\exists\n \geq 1:   \log\GLR_\n(\mep_1, \mep_0) \geq g(\n)\right) \\
	&\quad \quad \quad \quad \quad \quad \leq \begin{cases} e^{-g(1)} &\mbox{if } \D(\mep_1,\mep_0) \geq g(1) \\
		\inf_{\eta >1} \sum_{k=1}^{K_\eta}\exp\left\{-g(\eta^k) / \eta\right\} & \mbox{otherwise}.
	\end{cases}
\end{align}
    Note that the concavity condition for sub-$\psi_\mep$ function is satisfied by all additive sub-$\psi$ distributions and many important exponential families with discrete support, including Bernoulli, Poisson, Geometric and Negative binomial with known number of failures (Appendix~\ref{appen::proofs_of_thms}).
 \end{remark}
 }
 
 In the following subsections we will deploy  \cref{thm::boundary_crossing_GLR} to develop SGLR-like tests for the one-sided hypothesis \eqref{eq::GLR_hypo-sub-psi}. We will analyze separately the case in which the null and alternative hypotheses are well-separated ($\mep_1 >\mep_0$), and the case of no separation ($\mep_1 = \mep_0$).

\subsection{SGLR-like tests for well-separated alternatives} \label{subSec::well-separated}
In this subsection, we focus on the scenario where $\mep_1$ is strictly larger than $\mep_0$. In many applications, this separation condition can be derived from prior knowledge of the underlying distribution or from  requirements on the minimal effect sizes one seeks to detect. Furthermore, even if we intend to run an open-ended testing procedure, there might be an upper limit on the sample size due to time and budget constraints on the experiment.  In this case, the separation of null and alternative hypotheses can be imposed indirectly by the upper limit because if the null and alternative hypotheses are too close to each other then no fixed-level test can detect such a small separation given the upper limit on the sample size; see \cite{lai2004power}. In  \cref{rmk::how_to_choose_alternative} below, we will present a natural way of choosing the boundary of the alternative space for $\mep_1$ given an upper limit on sample size $\nmax$ for the sequential testing procedure proposed in this subsection. 

Now, choosing a constant boundary $g \in \mathbb{R}^+$, \cref{thm::boundary_crossing_GLR} immediately  yields that
\begin{equation} \label{eq::const_bound}
\begin{aligned}
        \sup_{\mep \leq \mep_0}\mathbb{P}_{\mep} \left(\exists \n \geq 1:   \log\GLR_\n(\mep_1, \mep_0) \geq g\right) 
 \leq \begin{cases} e^{-g} &\mbox{if } D_1 \geq g, \\
  \inf_{\eta >1} \left\lceil \log_\eta \left(\frac{g}{D_1}\right)\right\rceil e^{-g / \eta} & \mbox{otherwise}, \\
  \end{cases}
 \end{aligned}
\end{equation}
where $D_1 := \D(\mep_1,\mep_0)$.

\begin{remark}
For the constant boundary case, the term in the upper bound that depends on the infimum over $\eta > 1$ can be rewritten as 
\begin{equation} \label{eq::K_eta_computation}
     \inf_{\eta >1} \left\lceil \log_\eta \left(\frac{g}{D_1}\right)\right\rceil e^{-g / \eta} = \inf_{k \in \mathbb{N}} k \exp\left\{-g \left(\frac{D_1} {g}\right)^{1/k}\right\}.
\end{equation}
The right expression can be evaluated efficiently since it optimizes over integers, not reals. 
\end{remark}

For any given level $\alpha \in (0,1]$, let $ g_\alpha(\mep_1,\mep_0) > 0$ be a constant boundary value that  makes the right hand side of \eqref{eq::const_bound} equal to $\alpha$.  The boundary value $g_\alpha(\mep_1,\mep_0)$ can be numerically computed using  equation~\eqref{eq::K_eta_computation}. Alternatively, as shown in Appendix~\ref{appen::proof_of_props},  $g_\alpha(\mep_1, \mep_0)$ can be upper bounded as follows:
\begin{equation} \label{eq::const_g_upper_bound}
\begin{aligned}
    g_\alpha(\mep_1,\mep_0) \leq \inf_{\eta > 1}\left\{\eta \log\left(\frac{1}{\alpha}\left[1+ 2\log_\eta\left(\frac{\eta\sqrt{\eta}}{\alpha D_1 \log \eta}\vee 1\right)\right]\right)\right\}.
\end{aligned}
\end{equation}
This bound is slightly loose but useful for the numerical computation of $g_\alpha (\mep_1, \mep_0)$.

 Based on $g_\alpha(\mep_1,\mep_0)$, let $N_{\GLR}(g_\alpha, \mep_1,\mep_0)$ be the stopping time of the SGLR-like test defined by
\begin{equation} \label{eq::const_GLRT_stopping_time}
\begin{aligned}
    N_{\GLR}(g_\alpha, \mep_1,\mep_0)
    := \inf\left\{n \geq 1: \log \GLR_n(\mep_1, \mep_0) \geq g_\alpha(\mep_1,\mep_0) \right\}.
\end{aligned}
\end{equation}
By \cref{thm::boundary_crossing_GLR}, we can check that $N_{\GLR}(g_\alpha, \mep_1,\mep_0)$ induces a valid level $\alpha$ test.  Furthermore, from  Lorden's inequality \cite{lorden1970excess}, we can derive a nonasymptotic upper bound on the expected sample size under any alternative distribution, as shown next.

\begin{theorem} \label{thm::upper_bound_const}
Let $N_{\GLR}(g_\alpha, \mep_1,\mep_0)$ be the stopping time of the SGLR-like test defined in \eqref{eq::const_GLRT_stopping_time}. Then, $\sup_{\mep \leq \mep_0}\mathbb{P}_{\mep}\left(N_{\GLR}(g_\alpha, \mep_1,\mep_0) < \infty \right) \leq \alpha$. Furthermore, if the observations $X_1, X_2,\dots$ are i.i.d. then, for any $\mep \geq \mep_1 \in \Mep$, we have
\begin{equation}\label{eq::upper_bound_const_boundary}
    \begin{aligned}
        \mathbb{E}_{\mep}\left[N_{\GLR}(g_\alpha, \mep_1,\mep_0)\right]
        \leq \frac{g_\alpha(\mep_1,\mep_0)}{\D(\mep, \mep_0)} + \left[ \frac{\sigma_{\mep}\nabla\psi_{\mep_0}^*(\mep)}{\D(\mep, \mep_0)}\right]^2 + 1,
    \end{aligned}
\end{equation}
    where $\sigma_\mep^2 := \sup_{P \in \psi_\mep}\int (x-\mep)^2 \mathrm{d}P$ is the maximal variances of all the probability distributions in the sub-$\psi_{\mep}$ class which can be upper bounded as $\sigma_\mep^2  \leq \nabla^2 \psi_\mep(0) = 1 /\nabla^2 \psi_\mep^*(\mep) $.
\end{theorem}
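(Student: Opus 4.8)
The plan is to prove the two assertions separately: the type-1 error guarantee follows almost immediately from \cref{thm::boundary_crossing_GLR}, while the expected sample-size bound is the substantive part and rests on dominating $N_{\GLR}$ by a first-passage time of a genuine random walk and then applying Lorden's inequality. For the type-1 error, I would verify that the constant boundary $g(\cdot) \equiv g_\alpha(\mep_1,\mep_0)$ meets the two hypotheses of \cref{thm::boundary_crossing_GLR}: a constant is nonnegative and nondecreasing, and $t \mapsto g_\alpha(\mep_1,\mep_0)/t$ is nonincreasing with limit $0$. Since $g_\alpha(\mep_1,\mep_0)$ was defined precisely so that the right-hand side of \eqref{eq::const_bound} equals $\alpha$, \cref{thm::boundary_crossing_GLR} yields $\sup_{\mep \le \mep_0}\mathbb{P}_\mep(N_{\GLR}(g_\alpha,\mep_1,\mep_0) < \infty) \le \alpha$ directly.

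For the sample-size bound, fix any $\mep \ge \mep_1$. The \emph{key step} is the domination $\GLR_\n(\mep_1,\mep_0) = \sup_{z > \mep_1}\LR_\n(z,\mep_0) \vee 1 \ge \LR_\n(\mep,\mep_0)$, where the boundary case $\mep = \mep_1$ is handled by continuity of $z \mapsto \LR_\n(z,\mep_0)$ so as to take the limit into the open supremum $\{z > \mep_1\}$. This gives $\log\GLR_\n \ge \log\LR_\n(\mep,\mep_0)$ for every $\n$, hence $N_{\GLR}(g_\alpha,\mep_1,\mep_0) \le \tilde{N} := \inf\{\n \ge 1 : \log\LR_\n(\mep,\mep_0) \ge g_\alpha(\mep_1,\mep_0)\}$. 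By the definition \eqref{eq::LR-like_def} together with the tangent-line identity \eqref{eq::LR-like_tangent_line_form}, I would write $\log\LR_\n(\mep,\mep_0) = \sum_{\ii=1}^\n Z_\ii$ with $Z_\ii := \D(\mep,\mep_0) + \nabla\psi_{\mep_0}^*(\mep)(X_\ii - \mep)$. Under the i.i.d.\ assumption these increments are i.i.d.\ with drift $\mathbb{E}_\mep[Z_\ii] = \D(\mep,\mep_0) > 0$, where positivity holds because $\mep \ge \mep_1 > \mep_0$ in the well-separated regime.

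Thus $\tilde{N}$ is exactly the first passage of the random walk $(\sum_{\ii \le \n} Z_\ii)_\n$ above the level $g_\alpha(\mep_1,\mep_0)$, and I would invoke Lorden's inequality \cite{lorden1970excess} in the form $\mathbb{E}_\mep[\tilde{N}] \le g_\alpha(\mep_1,\mep_0)/\delta + \mathbb{E}_\mep[Z_1^2]/\delta^2$ with $\delta = \D(\mep,\mep_0)$. Decomposing $\mathbb{E}_\mep[Z_1^2] = \mathrm{Var}_\mep(Z_1) + \delta^2$ and bounding $\mathrm{Var}_\mep(Z_1) = (\nabla\psi_{\mep_0}^*(\mep))^2 \mathrm{Var}_\mep(X_1) \le (\nabla\psi_{\mep_0}^*(\mep))^2 \sigma_\mep^2$ produces $\mathbb{E}_\mep[\tilde{N}] \le \frac{g_\alpha(\mep_1,\mep_0)}{\D(\mep,\mep_0)} + \left[\frac{\sigma_\mep \nabla\psi_{\mep_0}^*(\mep)}{\D(\mep,\mep_0)}\right]^2 + 1$, the trailing $+1$ being exactly the $\delta^2/\delta^2$ contribution. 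Since $N_{\GLR} \le \tilde{N}$, the same bound transfers to $\mathbb{E}_\mep[N_{\GLR}]$. The finite second moment required by Lorden's inequality is supplied by the sub-$\psi_\mep$ assumption, since $\sigma_\mep^2 \le \nabla^2\psi_\mep(0) < \infty$.

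I expect the main obstacle to be twofold. First, invoking Lorden's inequality with its hypotheses genuinely verified (strictly positive drift and finite second moment) and matching the constants exactly, which requires recognizing that the additive $+1$ comes from the $\delta^2$ term in $\mathbb{E}_\mep[Z_1^2]$ rather than from overshoot control itself. Second, and conceptually the crux, is the reduction $\GLR_\n \ge \LR_\n(\mep,\mep_0)$, in particular justifying it at the boundary $\mep = \mep_1$ where the defining supremum ranges over the open set $\{z > \mep_1\}$; everything downstream of this reduction is a routine random-walk computation.
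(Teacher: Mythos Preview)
Your proposal is correct and takes essentially the same approach as the paper: dominate $N_{\GLR}$ by the LR-like stopping time $\tilde N = N_\LR$, express $\log\LR_\n(\mep,\mep_0)$ as a random walk with drift $\D(\mep,\mep_0)$, and combine Wald's equation with Lorden's overshoot bound (the paper carries out these last two steps separately, whereas you compress them into the single inequality $\mathbb{E}_\mep[\tilde N] \le g_\alpha/\delta + \mathbb{E}_\mep[Z_1^2]/\delta^2$). Your treatment of the boundary case $\mep = \mep_1$ via continuity is, if anything, more explicit than the paper's.
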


In the special but highly relevant case of $\sigma^2$-sub-Gaussian distributions, the upper bound on the expected sample size in \eqref{eq::upper_bound_const_boundary} can be written as
\begin{equation}
        \mathbb{E}_{\mep}\left[N_{\GLR}(g_\alpha, \mep_1,\mep_0)\right] \leq \frac{2\sigma^2\left[ g_\alpha(\mep_1,\mep_0) + 2\right]}{(\mep-\mep_0)^2}  + 1.
\end{equation}
The proof of \cref{thm::upper_bound_const} can be found in Appendix~\ref{appen::proofs_of_thms}.

Under the same setting of well-separated hypotheses, Lorden \cite{lorden1973open} investigated the properties of the SGLR test for exponential family distributions with a constant boundary. For any constant $g > 0$, Lorden proved that
  \begin{equation} \label{eq::Lorden}
  \begin{aligned}
    \sup_{\mep \leq \mep_0}\mathbb{P}_{\mep} \left(\n \geq 1:   \log\GLR_\n(\mep_1, \mep_0) \geq g\right) 
   \leq \begin{cases} e^{-g} &\mbox{if } \KL(\mep_1,\mep_0) \geq g~, \\
\left(1 + \frac{g}{\KL(\mep_1,\mep_0)} \right)e^{-g} & \mbox{otherwise }.
  \end{cases}
 \end{aligned}
\end{equation}
In fact,  the above bound can be immediately extended to hold also over sub-$\psi_{\Mep}$ families by replacing the KL divergence term $\KL(\mep_1,\mep_0)$ with the Bregman divergence $\D(\mep_1, \mep_0)$.

To compare our result with Lorden's bound, let $g_\alpha^L(\mep_1,\mep_0)$ be the smallest boundary value one can obtain from Lorden's bound in \eqref{eq::Lorden} for any given $\alpha$. In general, neither our choice $g_\alpha(\mep_1,\mep_0)$ nor Lorden's choice $g_\alpha^L(\mep_1,\mep_0)$ dominates the other one. However, in the moderate confidence regime where $\alpha$ is fixed and $\mep_1 \to \mep_0$, our bound increases at an exponentially slower rate compared to the one stemming from  Lorden's $g_\alpha^L(\mep_1,\mep_0)$. In detail, we can check that in the moderate confidence regime we have
\begin{align}
    g_\alpha^L(\mep_1,\mep_0) &= O\left(\log\left(1/ \D(\mep_1,\mep_0)\right)\right), \\
    g_\alpha(\mep_1,\mep_0) &= O\left(\log\log\left(1/ \D(\mep_1,\mep_0)\right)\right), \label{eq::asym_g_alpha}
\end{align}
where the log-log dependency makes it possible to apply our testing method to cases in which the separation is exponentially small. This is well illustrated in Figure~\ref{fig::const_boundary} by comparing the curves  $|\mep_1-\mep_0|^{-1} \mapsto g_\alpha^L$ and $|\mep_1-\mep_0|^{-1} \mapsto g_\alpha$ for a normal distributions with $\sigma = 1$ (notice that the separation is shown on a log-scale).  From the plot, we can check that, even for the exponentially small separation (i.e., $|\mep_1-\mep_0| \approx 10^{-10}$), the boundary value remains at a practical level.

\begin{figure}
    \begin{center}
    \includegraphics[scale =  0.75]{./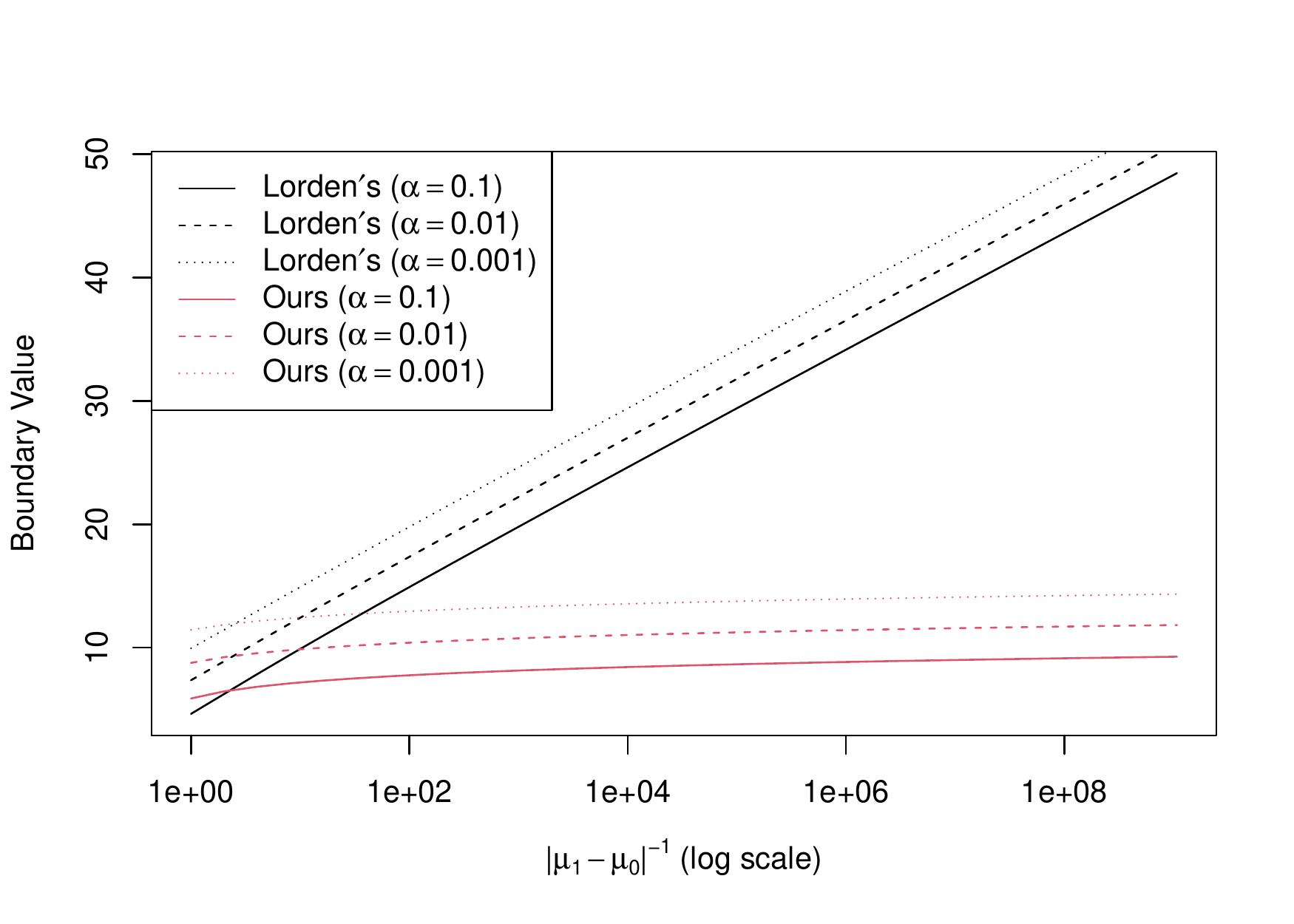}
    \end{center}
    \caption{Boundary values $g_\alpha^L(\mep_1,\mep_0)$ (black lines) and $g_\alpha(\mep_1,\mep_0)$ (red lines) for level $\alpha$ GLR tests based on Lorden's \eqref{eq::Lorden} upper bounds grow significantly faster than our \eqref{eq::const_bound} upper bounds. }
    \label{fig::const_boundary}
\end{figure}

The log-log dependency of the boundary value $g_\alpha(\mep_1,\mep_0)$ and the corresponding upper bound on the expected sample size in \cref{thm::upper_bound_const} are sharp since, in the worst case where $\mep = \mep_1$, the bounds on the expected sample size can be shown to be asymptotically tight in exponential families of distributions. In detail, for an exponential family of distributions parameterized  by the mean parameter $\mep \in \Mep$ and a given level $\alpha \in (0,1]$, let $N$ be a stopping time such that $\mathbb{P}_{\mep_0}(N < \infty) \leq \alpha$  for a fixed $\mep_0 \in \Mep$. Then, by  Farrell's theorem on open-ended tests \cite{farrell1964asymptotic, robbins1974expected}, the expected sample size under alternative sequences can be lower bounded as:
\begin{equation} \label{eq::farrell_org_form}
\begin{aligned}
\limsup_{\mep_1 \to \mep_0}\frac{(|\mep_1 - \mep_0|^2/2\sigma_{\mep_0}^2)\mathbb{E}_{\mep_1} N}{\log\log\left(1/|\mep_1 -\mep_0|\right)}
&\geq  \mathbb{P}_{\mep_0}\left(N = \infty\right) \\
&\geq 1 - \alpha,
\end{aligned}
\end{equation}
where $\sigma_{\mep_0}^2 := \nabla^2 B(\theta_{\mep_0})$ is the variance of the probability distribution at $\mep = \mep_0$. 
Using the Taylor series expansion of $B^*$ and the fact that $\KL(\mep_1,\mep_0) = D_{B^*}(\mep_1,\mep_0)$, we see 
\begin{align*}
\KL(\mep_1,\mep_0) &= D_{B^*}(\mep_1,\mep_0) \\
&= B^*(\mep_1) - B^*(\mep_0) - \nabla B^*(\mep_0) \left(\mep_1 - \mep_0\right) \\
&= \frac{1}{2}\nabla^2B^*(\mep_0)(\mep_1-\mep_0)^2 + o\left(|\mep_1-\mep_0|^2\right) \\
& = \frac{1}{2\sigma_{\mep_0}^2}(\mep_1-\mep_0)^2 + o\left(|\mep_1-\mep_0|^2\right),
\end{align*}
where the last equality is due to the fact that $\nabla^2 B^* (\mep_0) =\left[\nabla^2 B(\theta_{\mep_0})\right]^{-1}$. Farrell's lower bound in \eqref{eq::farrell_org_form} then yields
\begin{equation} \label{eq::farrell_lower}
\begin{aligned}
\limsup_{\mep_1 \to \mep_0}\frac{\KL(\mep_1,\mep_0)\mathbb{E}_{\mep_1} N}{\log\log\left(1/\KL(\mep_1,\mep_0)\right)} 
\geq  \mathbb{P}_{\mep_0}\left(N = \infty\right)
\geq 1 - \alpha.
\end{aligned}
\end{equation}
Therefore, we conclude that the log-log dependence in \eqref{eq::asym_g_alpha} and in the corresponding upper bound on the expected sample size in \eqref{eq::const_bound} cannot be avoided. 

The upper bound on the expected sample size in \cref{thm::upper_bound_const} is not fully adaptive to the underlying true but unknown mean parameter $\mep$ since the boundary value $g_\alpha(\mep_1,\mep_0)$ depends on the boundary of the alternative space $\mep_1$ instead of each alternative mean value $\mep$ itself. Although we can make the separation gap $|\mep_1 -\mep_0|$ between null and alternative spaces double-exponentially small while maintaining the threshold $g_\alpha(\mep_1,\mep_0)$ at a practical level, it is an interesting and practically relevant problem to design a SGLR-like test whose expected sample size is fully adaptive to the unknown alternative distribution. In the next subsection, we address this question using the boundary-crossing probability bound in \cref{thm::boundary_crossing_GLR}. 

\subsection{SGLR-like tests with no separation} \label{subSec::no_separation}
Below we consider the case of no separation (i.e. $\mep_0 = \mep_1$)  between the  null and alternative spaces in the one-sided test:
\begin{equation} \label{eq::one_sided_hypos_no_sep}
    H_0: \mep \leq \mep_0~~\text{vs}~~H_1: \mep > \mep_0.
\end{equation}
From the order-preserving property of the Bregman divergence, the log GLR-like statistic based on the first $\n$ observations can be written as $\n\D(\bar{X}_\n, \mep_0)\mathbbm{1}(\bar{X}_\n \geq \mep_0)$ and the upper bound on the boundary crossing probability from \cref{thm::boundary_crossing_GLR} is of the form
\begin{equation} \label{eq::cross_prob_no_sep}
\begin{aligned}
    \sup_{\mep \leq \mep_0}\mathbb{P}_{\mep} \left(\exists\n \geq 1: \bar{X}_\n \geq \mep_0,   \n\D(\bar{X}_\n, \mep_0) \geq g(\n)\right)
  \leq\inf_{\eta >1} \sum_{k=1}^{\infty}\exp\left\{-g(\eta^k) / \eta\right\}.
\end{aligned}
\end{equation}
Since the right hand side now involves an infinite sum, we cannot use a constant function $g$ for the boundary function. In fact, from the law of iterated logarithm, we know that the boundary function $g$ must increase at least at a log-log scale as the number of samples $n$ goes to infinity to get a nontrivial bound on the crossing probability. 

For simplicity, in order to build SGLR-like tests at level $\alpha \in (0,1]$, we will consider the boundary function 
\begin{equation}
    g_\alpha^c(n):= c\left[\log(1/\alpha) + 2 \log\left(\log_c c n\right)\right],
\end{equation}
where $c> 1$ is a fixed constant. Then, using   \eqref{eq::cross_prob_no_sep},
\begin{align}
        &\sup_{\mep \leq \mep_0}\mathbb{P}_{\mep} \left(\exists\n \geq 1: \bar{X}_\n \geq \mep_0,\n\D(\bar{X}_\n, \mep_0) \geq c\left[\log(1/\alpha) + 2 \log\left(\log_c c n\right)\right]\right) \nonumber\\
  & \quad \quad \quad \quad \quad \quad\leq \inf_{\eta >1} \sum_{k=1}^{\infty}\exp\left\{-\frac{c}{\eta}\log(1/\alpha)\right\} \frac{1}{\left(1 + k \log_c \eta\right)^{2c / \eta}} \nonumber\\
  & \quad \quad \quad \quad \quad \quad\leq \alpha\sum_{k=1}^\infty \frac{1}{(1+k)^2} ~ = \alpha \left(\frac{\pi^2}{6}-1\right) ~ \leq \alpha. \label{eq::GLR_test_ineq_no_sep}
\end{align}
Hence, a level $\alpha$ open-ended sequential testing procedure for  \eqref{eq::one_sided_hypos_no_sep} can be obtained based on the stopping time
    \begin{equation}
    \begin{aligned}
        N_{\GLR}(g_\alpha^c, \mep_0) := \inf\left\{\n \geq 1: \bar{X}_\n \geq \mep_0 ,\n\D(\bar{X}_\n, \mep_0) \geq c\left[\log(1/\alpha) + 2 \log\left(\log_c c n\right)\right] \right\}.
    \end{aligned}
    \end{equation}

Unlike the scenario analyzed in the previous subsection, however, Lorden's inequality is no longer applicable to the non-constant, non-linear boundary $g_\alpha^c$. Below we present novel, nonasymptotic probabilistic bounds on the stopping time $N_\GLR(g_\alpha^c, \mep_0)$ under the alternative, which we can then use to provide a high-probability bound on the sample size. We remark that the finite-sample feature of our bounds sets it apart from other results in the literature, which for the most part have relied on asymptotic arguments.

To derive the desired probabilistic bounds, we rely on a notion of divergence between two sub-$\psi_\Mep$ distributions having different mean parameters $\mep > \mep_0$ given by
\begin{equation}\label{eq:new.divergence}
    D_{\psi^*}^{*}(\mep, \mep_0) := \max\left\{\D(z^*, \mep_0), D_{\psi^*_{\mep}}(z^*,\mep)\right\},
\end{equation}
 where $z^* = z^*(\mep, \mep_0)$ is the minimizer of the function
\begin{equation}
    z \in M \mapsto f(z):= \max\left\{\D(z, \mep_0), D_{\psi^*_{\mep}}(z,\mep)\right\},
\end{equation}
  which is also equal to the unique solution of the equation $\D(z, \mep_0) = D_{\psi^*_{\mep}}(z,\mep)$ with respect to $z \in \Mep$. For the exponential family case, the divergence \eqref{eq:new.divergence} was introduced by Wong \cite{wong1968asymptotically} to characterize the asymptotic behavior of the SGLR test under the alternative. For a concrete example, if the two distribution belongs to a sub-Gaussian class with common variance parameter $\sigma^2$ then
\[
D_{\psi^*}^*(\mep,\mep_0) = \frac{1}{4}D_{\psi^*_{\mep_0}}(\mep, \mep_0) = \frac{1}{8\sigma^2}(\mep - \mep_0)^2,
\]
while for Bernoulli distributions, we instead have
\begin{align*}
\frac{1}{2}(\mep - \mep_0)^2 \leq D_{\psi^*}^*(\mep,\mep_0)\leq \mep \log\left(\frac{\mep}{\mep_0}\right) + (1-\mep) \log\left(\frac{1-\mep}{1-\mep_0}\right).
\end{align*}
Note that the $D_{\psi^*}^*$ divergence satisfies $D_{\psi^*}^*(\mep_0, \mep_0) = 0$ and $D_{\psi^*}^*(\mep, \mep_0) = D_{\psi^*}^*(\mep_0, \mep)$ for any $\mep, \mep_0 \in \Mep$.
Using $D_{\psi^*}^*$, we can formulate the following high probability bound on the stopping time $N_{\GLR}(g_\alpha^c, \mep_0)$ under the alternative.
\begin{theorem}\label{thm::T_high-sub-psi}
For any fixed  $\mep > \mep_0 \in \Mep$,  $\delta \in (0,1]$ and  $c >1$, define $T_{\high}(\delta)$ by
\begin{equation} \label{eq::T_high-sub-psi}
\begin{aligned}
    T_{\high}(\delta)
    := \inf\left\{t  \geq 1: \frac{c\left[ \log(1/\delta)+ 2\log \left(\log_c ct\right)\right]}{D_{\psi^*}^*(\mep, \mep_0)} \leq t \right\}.
\end{aligned}
\end{equation}
Then, for any $\delta \in (0,\alpha)$, we have
\begin{equation}  \label{eq::T_high_bound-sub-psi}
   \mathbb{P}_{\mep}\left(N_{\GLR}(g_\alpha^c, \mep_0) \leq T_{\high}(\delta) \right)\geq 1-\delta.
\end{equation}
Also, $T_{\high}(\delta)$ can be upper bounded by $\max(1,A)$, where
\begin{equation} \label{eq::T_high_bound-sub-psi_explicit}
\begin{aligned}
   A= \frac{2c}{D_\mep^*} \log(1/\delta)  
   +\frac{2c}{D_\mep^*}\log \left( 2\log_{c}\left(\frac{2c^2}{\log c}\right) + 2 \left[\log_{c}\left(1/D^*_\mep \right)\right]_+ \right),
\end{aligned}
\end{equation}
with $D_\mep^* := D_{\psi^*}^*(\mep, \mep_0)$.
\end{theorem}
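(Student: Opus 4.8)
The plan is to prove the two assertions of \cref{thm::T_high-sub-psi} separately, treating the high-probability bound \eqref{eq::T_high_bound-sub-psi} first and then the explicit estimate \eqref{eq::T_high_bound-sub-psi_explicit}. For the former, the key reduction is that it suffices to control the crossing event at the single deterministic time $t := T_{\high}(\delta)$, since
\[
\left\{\bar{X}_t \geq \mep_0,\ t\,\D(\bar{X}_t,\mep_0) \geq g_\alpha^c(t)\right\} \subseteq \left\{N_{\GLR}(g_\alpha^c,\mep_0) \leq t\right\}.
\]
Because $z \mapsto \D(z,\mep_0)$ is strictly increasing on $[\mep_0,\infty)$ (a Bregman divergence from $\mep_0$, convex and minimized there), crossing at time $t$ is equivalent to $\bar{X}_t$ exceeding the unique threshold $z_\alpha(t) > \mep_0$ solving $t\,\D(z_\alpha(t),\mep_0) = g_\alpha^c(t)$. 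I would thus reduce the problem to lower bounding $\mathbb{P}_\mep(\bar{X}_t \geq z_\alpha(t))$.

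The heart of the argument is to replace $z_\alpha(t)$ by the balance point $z^* = z^*(\mep,\mep_0)$, which by the discussion preceding the theorem satisfies $\D(z^*,\mep_0) = D_{\psi^*_{\mep}}(z^*,\mep) = D^*_{\psi^*}(\mep,\mep_0)$ and lies strictly between $\mep_0$ and $\mep$ (the first divergence increases from $0$ at $\mep_0$, the second decreases to $0$ at $\mep$). At $t = T_{\high}(\delta)$ the defining inequality of $T_{\high}$ gives $t\,D^*_{\psi^*}(\mep,\mep_0) \geq c[\log(1/\delta) + 2\log(\log_c ct)] > g_\alpha^c(t)$, where the strict inequality uses $\delta < \alpha$. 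Hence $t\,\D(z^*,\mep_0) = t\,D^*_{\psi^*}(\mep,\mep_0) > g_\alpha^c(t)$, so $z^* \geq z_\alpha(t)$ and the event $\{\bar{X}_t \geq z^*\}$ already forces a crossing. It then remains to bound $\mathbb{P}_\mep(\bar{X}_t < z^*)$: since each observation is sub-$\psi_\mep$ and $z^* < \mep$, the lower-tail Chernoff bound for the sample mean yields $\mathbb{P}_\mep(\bar{X}_t < z^*) \leq \exp(-t\,D_{\psi^*_{\mep}}(z^*,\mep)) = \exp(-t\,D^*_{\psi^*}(\mep,\mep_0))$, and $t\,D^*_{\psi^*}(\mep,\mep_0) \geq c\log(1/\delta) \geq \log(1/\delta)$ gives the bound $\delta$, establishing \eqref{eq::T_high_bound-sub-psi}.

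For the explicit upper bound \eqref{eq::T_high_bound-sub-psi_explicit}, observe that $T_{\high}(\delta)$ is the first $t \geq 1$ at which $t \geq \frac{c[\log(1/\delta)+2\log(\log_c ct)]}{D^*_\mep}$, and since the right-hand side grows only like $\log\log t$, any $t \geq 1$ satisfying this inequality is an upper bound on $T_{\high}(\delta)$. It therefore suffices to verify that $A$ (resp.\ $\max(1,A)$, to remain in the domain $[1,\infty)$) satisfies the inequality. Writing $A = \frac{2c}{D^*_\mep}[\log(1/\delta) + L]$ with $L$ the logarithmic correction appearing in \eqref{eq::T_high_bound-sub-psi_explicit}, substituting and cancelling reduces the claim to $\tfrac12\log(1/\delta) + L \geq \log(\log_c cA)$; expanding $\log_c cA = \frac{1}{\log c}\log(cA)$ and controlling $\log(\log(1/\delta)+L)$ and $\log(1/D^*_\mep)$ by elementary estimates ($\log x \leq x$ and $\log(a+b) \leq \log 2 + \max\{\log a,\log b\}$) shows that the factor $2$ in the leading coefficient of $A$ supplies precisely the slack needed to absorb the iterated-logarithm term.

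I expect the main obstacle to be this final verification: the inequality defining $T_{\high}(\delta)$ is self-referential in $t$ through the $\log(\log_c ct)$ term, so extracting an explicit closed-form bound requires a careful, if routine, chain of $\log\log$ estimates, with the constants in \eqref{eq::T_high_bound-sub-psi_explicit} chosen so that the doubled leading coefficient dominates the residual double logarithm. The probabilistic part, by contrast, is clean once the balance-point identity $\D(z^*,\mep_0) = D_{\psi^*_{\mep}}(z^*,\mep) = D^*_{\psi^*}(\mep,\mep_0)$ is invoked, since it simultaneously guarantees a crossing above $z^*$ and an exponentially small lower-tail probability below $z^*$ at the common rate $D^*_{\psi^*}(\mep,\mep_0)$.
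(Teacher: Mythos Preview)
Your proposal is correct and shares the paper's core idea---the balance point $z^*$ satisfying $\D(z^*,\mep_0)=D_{\psi^*_\mep}(z^*,\mep)=D^*_{\psi^*}(\mep,\mep_0)$---but closes the probabilistic argument differently. The paper formalizes the balance-point reduction as a separate ``exchange'' lemma (its Lemma~3), which says that at time $T_{\high}(\delta)$ the event $\{\bar X_t$ fails to cross the $g^c_\delta$-boundary relative to $\mep_0\}$ is \emph{equivalent} to $\{\bar X_t$ is far below $\mep$ in the sense $D_{\psi^*_\mep}(\bar X_t,\mep)\mathbbm{1}(\bar X_t<\mep)>g^c_\delta(t)/t\}$; it then bounds the latter by \emph{re-invoking the time-uniform inequality}~\eqref{eq::GLR_test_ineq_no_sep} with the roles of null and alternative swapped (null at $\mep$, level $\delta$). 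You instead apply the pointwise lower-tail Chernoff bound $\mathbb{P}_\mep(\bar X_t<z^*)\le e^{-tD_{\psi^*_\mep}(z^*,\mep)}=e^{-tD^*_\mep}\le\delta^{c}\le\delta$. Your route is more elementary---it avoids a second appeal to the stitching machinery---while the paper's is more symmetric with its earlier development and directly yields the stronger tail decay needed later for the expectation bound~\eqref{eq::T_ave_bound_no_sep} via integrating $\mathbb{P}_\mep(N>T_{\high}(\delta))\le\delta$ over~$\delta$.

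For the explicit estimate~\eqref{eq::T_high_bound-sub-psi_explicit}, your strategy (any $t\ge1$ satisfying the defining inequality upper-bounds the infimum) is equivalent to the paper's, but the paper executes it through a dedicated bootstrap lemma: from the self-referential inequality it first applies $\log x\le\sqrt{x}$ to extract a crude polynomial bound on~$t$, substitutes this back to linearize the iterated logarithm, and then uses $a+\log x\le a\sqrt{x}$ (for $a\ge2$) to absorb the residual $\log(1/\delta)$ term. Your sketch with $\log x\le x$ and $\log(a+b)\le\log2+\max\{\log a,\log b\}$ points in the right direction but would need tightening to recover the precise constants stated; the paper's $\sqrt{x}$ trick is what makes the factor~$2$ in the leading coefficient come out cleanly.
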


\noindent The proof of \cref{thm::T_high-sub-psi} can be found in Appendix~\ref{appen::proofs_of_thms}.
As a consequence, we have the following upper bound on the expected sample size:
\begin{equation} \label{eq::T_ave_bound_no_sep}
\begin{aligned}
   \mathbb{E}_{\mep}\left[N_{\GLR}(g_\alpha^c, \mep_0)\right] \leq 1+\frac{2c}{D_\mep^*} \log(1/\alpha)  
  +\frac{2c}{D_\mep^*}\log \left( 2\log_{c}\left(\frac{2c^{2.5}}{\log c}\right) + 2 \left[\log_{c}\left(1/D^*_\mep \right)\right]_+ \right),
\end{aligned}
\end{equation}

Note that both bounds,  in probability and in expectation, are fully adaptive to the true alternative parameter $\mep$ via the divergence $D^*_\mep:= \D^*(\mep, \mep_0)$. For exponential family distributions, the above upper bound matches Farrell's optimal rate given in \eqref{eq::farrell_org_form} up to a constant factor under the moderate confidence regime in which $\mep \to \mep_0$ and $\alpha$ is fixed.

\section{Confidence sequences via GLR-like statistics}
\label{sec::CS}

Previously, we discussed how to build open-ended sequential testing procedure for the one-sided testing problem based on the general upper bound on the boundary crossing probability  given in \cref{thm::boundary_crossing_GLR}. Relying on the same bound, we can construct confidence sequences for $\mu$. A level $\alpha$ confidence sequence for $\mu$ is a sequence of sets $\{\CI_\n\}_{\n \in \mathbb{N}}$ such that 
\begin{equation}
    \mathbb{P}_\mep \left(\forall \n \geq 1 : \mep \in \CI_\n \right) \geq 1-\alpha,~~\forall \mep \in \Mep.
\end{equation}
For any chosen boundary $g$ and mapping $\mep_0 \mapsto \mep_1(\mep_0)$ with $\mep_1 > \mep_0$, each $\CI_\n$ is defined by
\begin{equation}
\begin{aligned}
    \CI_\n := \left\{\mep_0  \in \Mep : \log\GLR_\n(\mep_1, \mep_0) < g(\n),~~\text{or}~\inf_{\eta >1} \sum_{k=1}^{1\vee K_\eta}\exp\left\{-g(\eta^k) / \eta\right\} > \alpha \right\},
\end{aligned}
\end{equation}
where $K_\eta$ is given in \eqref{eq::K_number}. Of course, different choices of the boundary function $g$ and of the  mapping $\mep_0 \mapsto \mep_1(\mep_0)$ result in confidence sequences of different shapes. 
For any such choice, the above construction of $\CI_\n$ is guaranteed to yield a valid level $\alpha$ confidence sequence since
\begin{align*}
    &\mathbb{P}_\mep \left(\forall \n \geq 1: \mep \in \CI_\n \right)  
    = 1- \mathbb{P}_\mep \left(\exists \n\geq 1 : \mep \notin \CI_\n \right) \\
    & \geq 1 - \mathbb{P}_\mep \left(\exists \n \geq 1: \log\GLR_\n(\mep_1(\mep),\mep) \geq g(\n)\right)
    \mathbbm{1}\left(\inf_{\eta >1} \sum_{k=1}^{1\vee K_\eta}\exp\left\{-g(\eta^k) / \eta\right\} \leq \alpha\right) \\
    & \geq 1- \left(\inf_{\eta >1} \sum_{k=1}^{1\vee K_\eta}\exp\left\{-g(\eta^k) / \eta\right\} \right) \mathbbm{1}\left(\inf_{\eta >1} \sum_{k=1}^{1\vee K_\eta}\exp\left\{-g(\eta^k) / \eta\right\} \leq \alpha\right) \\
    &\geq 1 - \alpha,
\end{align*}
for each $\mep \in \Mep$ where the second-to-last inequality comes from the bound in \cref{thm::boundary_crossing_GLR}.  In this section, we present a slightly generalized version of \cref{eq::const_bound} to obtain confidence sequences that are valid over $\mathbb{N}$ and uniformly close to the Chernoff bound on chosen finite time intervals.

\subsection{Confidence sequence uniformly close to the Chernoff bound on  finite time intervals}
Recall that for each $\n$ and $\alpha \in (0,1]$, the (pointwise) Chernoff bound for sub-$\psi_\Mep$ distributions takes the form
\begin{equation} \label{eq::chernoff_org}
    \mathbb{P}_{\mep}\left(\bar{X}_\n \geq \mep_\epsilon\right) \leq e^{-\n D_{\psi_{\mep}^*}(\mep_\epsilon, \mep) },
\end{equation}
for any fixed $\mep_\epsilon > \mep \in \Mep$. Since the mapping $z \mapsto D_{\psi_{\mep}^*}(z, \mep)$ is increasing on $[\mep, \infty) \cap \Mep$, by letting $g:= \n D_{\psi_{\mep}^*}(\mep_\epsilon, \mep)$, the Chernoff bound can be written as
\begin{equation} \label{eq::chernoff_deviation}
    \mathbb{P}_{\mep}\left(\bar{X}_\n \geq \mep,~\n D_{\psi_{\mep}^*}(\bar{X}_\n, \mep) \geq g\right) \leq e^{-g},
\end{equation}
for any fixed $\mep \in \Mep$.

As discussed in \cref{sec::GLR-like test}, from the law of iterated logarithm, we know that the above bound cannot be extended to an anytime-valid bound. That is, there is no time-independent boundary  having a nontrivial upper bound on the  boundary-crossing probability
\begin{equation}
   \mathbb{P}_{\mep}\left(\exists \n \geq 1: \bar{X}_\n \geq \mep,~\n D_{\psi_{\mep}^*}(\bar{X}_\n, \mep) \geq g \right),
\end{equation}
for all $\mep \in \Mep$. However, in virtually all practical cases, there exists an effective limit on the duration of the experiment and therefore on the sample size. And conversely, we often impose a minimum sample size requirement to, e.g.,  meet prespecified accuracy requirements, or to stabilize the experiment or to take account seasonality effects.
As a result, in many situations, practitioners may find it desirable to use anytime confidence sequences that are uniformly close to the optimal pointwise Chernoff bound on a prespecified finite time interval $[\nmin, \nmax]$. We next describe such a guarantee.

\begin{theorem} \label{thm::CS_simple}
For any $g > 0$ and $\mep_0 \in \Mep$, define
 \begin{equation}
     \n_0:=  \inf\left\{\n \geq 1:  \sup_{z\in \Mep, z > \mep_0 }\D(z ,\mep_0) \geq g/ \n \right\}.
 \end{equation}
For any $\n_0\leq\nmin<\nmax \in \mathbb{N}$, 
let $\mep_1 < \mep_2$ be  solutions on $(\mep_0, \infty) \cap \Mep$ of the equations
\begin{equation} \label{eq::mep_def}
\frac{g}{\nmax} = \D(\mep_1,\mep_0) \quad \text{and} \quad \frac{g}{\nmin} = \D(\mep_2,\mep_0)
\end{equation}
 respectively.
Then, the boundary crossing probability for the likelihood ratio process is such that
\begin{equation} \label{eq::const_bound_general}
\begin{aligned}
        &\mathbb{P}_{\mep_0} \left(\exists \n \geq 1:   \sup_{z \in (\mep_1, \mep_2)}\log \left(\LR_\n(z,\mep_0)\vee 1\right) \geq g\right)\\
        &\leq  e^{-g}\mathbbm{1}(\nmin > \n_0) + \inf_{\eta >1} \left\lceil \log_\eta \left(\frac{\nmax}{\nmin}\right)\right\rceil e^{-g / \eta}.
  \end{aligned}
\end{equation}
\end{theorem}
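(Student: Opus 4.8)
The plan is to turn the crossing of the restricted GLR-like process $\sup_{z\in(\mep_1,\mep_2)}\log(\LR_\n(z,\mep_0)\vee 1)$ into a boundary crossing for the sample mean, and then to dominate that by a small number of single-supermartingale (Ville) bounds that are stitched together. Two facts would be set up first. Since we work under $\mathbb{P}_{\mep_0}$, for each fixed $z\in\Mep$ the process $\LR_\n(z,\mep_0)=\prod_{\ii\le\n}e^{\lambda_z X_\ii-\psi_{\mep_0}(\lambda_z)}$ with $\lambda_z=\nabla\psi^*_{\mep_0}(z)$ is a nonnegative supermartingale of unit initial value, so Ville's inequality gives $\mathbb{P}_{\mep_0}(\exists\n:\log\LR_\n(z,\mep_0)\ge a)\le e^{-a}$. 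Next, by the tangent-line identity \eqref{eq::LR-like_tangent_line_form}, $\tfrac1\n\log\LR_\n(z,\mep_0)$ is the tangent to the convex map $w\mapsto\D(w,\mep_0)$ at $w=z$; hence $\sup_{z\in(\mep_1,\mep_2)}\tfrac1\n\log\LR_\n(z,\mep_0)\le\D(\bar X_\n,\mep_0)$ (for $\bar X_\n>\mep_0$), and for $\bar X_\n\ge z>\mep_0$ one has the one-sided bound $\log\LR_\n(z,\mep_0)\ge\n\,\D(z,\mep_0)$. Since $g>0$, the event of interest is $\{\exists\n:\sup_{z\in(\mep_1,\mep_2)}\log\LR_\n(z,\mep_0)\ge g\}$, and the first inequality shows a crossing at time $\n$ forces $\bar X_\n>\mep_0$ and $\D(\bar X_\n,\mep_0)\ge g/\n$, i.e. $\bar X_\n\ge b(\n)$ where $b(\n)$ solves $\D(b(\n),\mep_0)=g/\n$; note $b$ is decreasing with $b(\nmin)=\mep_2$ and $b(\nmax)=\mep_1$ by \eqref{eq::mep_def}.

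I would then split the index set into three ranges. In the \emph{early} range $\n<\nmin$, a crossing gives $\D(\bar X_\n,\mep_0)\ge g/\n>g/\nmin=\D(\mep_2,\mep_0)$, hence $\bar X_\n>\mep_2$, at which point the supremum is attained at the right endpoint and equals $\log\LR_\n(\mep_2,\mep_0)$; so the early crossing implies $\log\LR_\n(\mep_2,\mep_0)\ge g$, which Ville bounds by $e^{-g}$. The threshold $\n_0$ enters precisely here: for $\n<\n_0$ one has $g/\n>\sup_{z>\mep_0}\D(z,\mep_0)\ge\D(\bar X_\n,\mep_0)$, so the statistic is deterministically below $g$ and no early crossing is possible unless the range $[\n_0,\nmin)$ is nonempty, i.e. unless $\nmin>\n_0$. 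This is what produces the indicator $\mathbbm 1(\nmin>\n_0)$ in \eqref{eq::const_bound_general}.

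For the \emph{middle} range I would stitch with geometric epochs $[m_{j-1},m_j]$, $m_j:=\nmin\,\eta^{\,j}$, for $j=1,\dots,K$ with $K:=\lceil\log_\eta(\nmax/\nmin)\rceil$, so that $m_K\ge\nmax$ and $m_{K-1}<\nmax$. For $j\le K-1$ take $z_j:=b(m_j)\in(\mep_1,\mep_2)$: a crossing at $\n\in[m_{j-1},m_j]$ forces $\bar X_\n\ge b(\n)\ge b(m_j)=z_j$ (as $b$ is decreasing), so the one-sided bound yields $\log\LR_\n(z_j,\mep_0)\ge\n\,\D(z_j,\mep_0)=\n\,g/m_j\ge g/\eta$ using $\n\ge m_{j-1}=m_j/\eta$; Ville bounds each such epoch by $e^{-g/\eta}$.

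The step I expect to be the main obstacle---and the reason only one $e^{-g}$ appears rather than two---is the \emph{late} range $\n>\nmax$, where the boundary is the clipped tangent at $\mep_1$ and a naive argument would contribute a spurious $e^{-g}$. I would instead absorb it into the final epoch by taking $z_K:=\mep_1$ and letting its supermartingale run over all $\n\ge m_{K-1}$. For any such $\n$, a crossing implies $\log\LR_\n(\mep_1,\mep_0)\ge g/\eta$: if $\bar X_\n\ge\mep_1$ the one-sided bound gives $\log\LR_\n(\mep_1,\mep_0)\ge\n\,\D(\mep_1,\mep_0)=\n\,g/\nmax\ge g/\eta$ (since $m_{K-1}=m_K/\eta\ge\nmax/\eta$), while if $\bar X_\n<\mep_1$ the supremum is attained at the left endpoint and the crossing reads $\log\LR_\n(\mep_1,\mep_0)\ge g\ge g/\eta$ directly. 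Thus Ville charges this whole tail, together with the last curve piece $[m_{K-1},\nmax]$, to a single $e^{-g/\eta}$ counted as the $K$-th epoch. Summing the $K$ epochs yields $K\,e^{-g/\eta}$; adding the early term $e^{-g}\mathbbm 1(\nmin>\n_0)$ and taking the infimum over $\eta>1$ of the $K$-epoch contribution gives exactly \eqref{eq::const_bound_general}.
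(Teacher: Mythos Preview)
Your proposal is correct and follows essentially the same approach as the paper: split into an early range $[\n_0,\nmin)$ handled by a single Ville bound at $z=\mep_2$ (producing $e^{-g}\mathbbm{1}(\nmin>\n_0)$), and a middle-plus-late range $[\nmin,\infty)$ handled by stitching with $K=\lceil\log_\eta(\nmax/\nmin)\rceil$ epochs, the last of which uses $z_K=\mep_1$ to absorb the entire tail $[\,m_{K-1},\infty)$ into a single $e^{-g/\eta}$ contribution. The paper phrases the same argument in the geometric language of half-spaces $H(\mep_i,\mep_0)$ and regions $R(\mep_1,\mep_0)$ and delegates the stitching to Lemma~\ref{lem::relationships}, whereas you carry out the stitching explicitly in analytic form; these are presentational differences only.
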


\begin{remark} \label{rmk::how_to_choose_alternative}
If $\n_0 = \nmin < \nmax$, we can get rid of $\mep_2$ in the supremum in the boundary crossing probability in \eqref{eq::const_bound_general}. In this case, the above inequality is reduced to the usual constant boundary crossing inequality from \cref{subSec::well-separated} with a specifically chosen alternative $\mep_1$ given by $\frac{g}{\nmax} = \D(\mep_1,\mep_0)$. From the perspective of the duality between sequential tests and confidence sequences, this observation tells us that if we have a practical upper limit of the sequential testing procedure described in \cref{subSec::well-separated}, it is natural to use the value $\mep_1$ given by $\frac{g_\alpha}{\nmax} = \D(\mep_1,\mep_0)$ as the boundary of the alternative space where $\alpha \in (0,1]$ is the target level of the test and $g_\alpha$ is the constant boundary which makes the right hand side of \eqref{eq::const_bound_general} equal to $\alpha$. 
\end{remark}

\begin{figure}
    \begin{center}
    \includegraphics[scale =  0.75]{./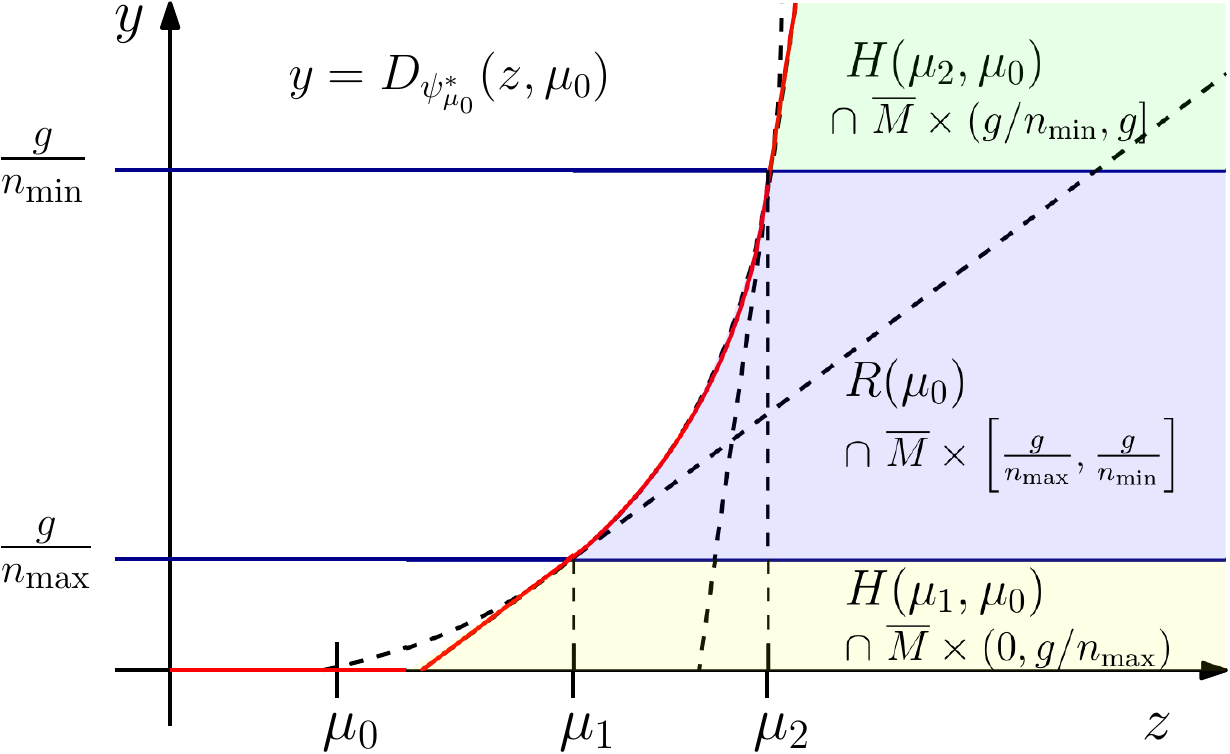}
    \end{center}
    \caption{Illustration of the boundary-crossing events and related regions $H(\mep_1, \mep_0), H(\mep_2, \mep_0)$ and $R(\mep_0)$  in \eqref{eq::GLR_like_CS_explicit}. } 
    \label{fig::GLR-CS}
\end{figure}
Note that the event on the left hand side can be written as 
\begin{equation} \label{eq::GLR_like_CS_explicit}
\begin{aligned}
&\left\{\exists n \geq 1 : \left(\bar{X}_\n, \frac{g}{\n}\right) \in H(\mep_2, \mep_0)\cap R(\mep_0)\cap H(\mep_1, \mep_0) \right\} \\
&= \left\{\exists  n \in [1, \nmin) : \left(\bar{X}_\n, \frac{g}{\n}\right) \in H(\mep_2, \mep_0) \right\}\\
    &~~~~\cup \left\{\exists n \in [\nmin, \nmax]: \left(\bar{X}_\n, \frac{g}{\n}\right) \in R(\mep_0) \right\} \\
    &~~~~~~~~~~~~\cup \left\{\exists n \in (\nmax, \infty) :   \left(\bar{X}_\n, \frac{g}{\n}\right) \in H(\mep_1, \mep_0)\right\},
\end{aligned}
\end{equation}
where the set  $R(\mep_0)$ is defined by
\begin{equation}
\begin{aligned}
    R(\mep_0) 
    := \left\{(z,y) \in \overline{\Mep} \times [0,\infty) : y \leq D_{\psi^*_{\mep_0}}(z,\mep_0), z \geq \mep_0\right\},
\end{aligned}
\end{equation}
and $H(\mep_1,\mep_0)$ and $H(\mep_2,\mep_0)$ are halfspaces contained in and tangent to $ R(\mep_0)$ at $(\mep_1, g / \nmax)$ and $(\mep_2, g / \nmin)$, respectively. See \cref{fig::GLR-CS} for an illustration of $H(\mep_1, \mep_0), H(\mep_2,\mep_0)$ and $R(\mep_0)$.

In particular, on the time interval $[\nmin, \nmax]$, the boundary-crossing event can be written as
\begin{equation}
\left\{\exists n \in [\nmin, \nmax]:\n \bar{X}_\n \geq \mep_0, \D(\bar{X}_\n, \mep_0) \geq g \right\}.
\end{equation}
Therefore, we can check that the confidence sequence based on \cref{thm::CS_simple} is anytime-valid and uniformly close to the pointwise Chernoff bound on the time interval $[\nmin, \nmax]$.

More generally, we may have a sequence of time intervals $\{[\nmin^{(k)}, \nmax^{(k)}]\}_{k=1}^K$ on which we want the confidence sequence to be uniformly closer to the pointwise Chernoff bound. In this case, we can extend the bound in \cref{thm::CS_simple} as follows.

\begin{corollary} \label{cor::CS_multi}
Choose a sequence of boundary values $\{g^{(k)}\}_{k=1}^K$, $\mep_0 \in \Mep$ and sequence of  time intervals $\{[\nmin^{(k)}, \nmax^{(k)}]\}_{k=1}^K$ with $\n_0:= \nmax^{(0)} \leq \nmin^{(1)}< \nmax^{(1)} \leq \cdots \leq \nmin^{(K)}< \nmax^{(K)}$. For each $k \in [K]$, let $\mep_1^{(k)} < \mep_2^{(k)}$ be solutions  on $(\mep_0, \infty) \cap \Mep$ to the equations:
\begin{equation}
    \frac{g^{(k)}}{\nmax^{(k)}} = \D(\mep_1^{(k)},\mep_0),~\text{~ and ~} ~\frac{g^{(k)}}{\nmin^{(k)}} = \D(\mep_2^{(k)},\mep_0).
\end{equation}
 Then, the boundary crossing probability can be bounded as 
\begin{equation} \label{eq::const_bound_general_multi}
\begin{aligned}
&\mathbb{P}_{\mep_0} \left(\exists \n \geq 1, \exists k \in [K]:  \sup_{z \in (\mep_1^{(k)}, \mep_2^{(k)})}\log \left(\LR_\n(z,\mep_0)\vee 1\right) \geq g^{(k)} \right)\\
&\leq  \sum_{k=1}^K \left[e^{-g^{(k)}}\left[\mathbbm{1}(\nmin^{(k)} >\nmax^{(k-1)})\right]+  \inf_{\eta >1} \left\lceil \log_\eta \left(\nmax^{(k)} / \nmin^{(k)}\right)\right\rceil e^{-g^{(k)} / \eta}\right].
\end{aligned}
\end{equation}
\end{corollary}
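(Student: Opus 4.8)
The plan is to prove \cref{cor::CS_multi} by a union bound over $k \in [K]$ combined with the time-region decomposition already used for \cref{thm::CS_simple}, but carried out jointly across the $K$ intervals so that the tangent-line crossings in overlapping time ranges are not double counted. First I would observe that, by the union bound, it suffices to control, for each fixed $k$, the probability that the $k$-th crossing event
\[
\exists \n \geq 1 : \sup_{z \in (\mep_1^{(k)}, \mep_2^{(k)})}\log\left(\LR_\n(z,\mep_0)\vee 1\right) \geq g^{(k)}
\]
occurs, and then sum the resulting bounds. For each $k$ I would invoke the geometric identity in \eqref{eq::GLR_like_CS_explicit} to split this event, according to the value of $\n$, into a left-tail crossing of the steep tangent halfspace $H(\mep_2^{(k)}, \mep_0)$ on $[1, \nmin^{(k)})$, a curve crossing of $R(\mep_0)$ on $[\nmin^{(k)}, \nmax^{(k)}]$, and a right-tail crossing of the flat tangent halfspace $H(\mep_1^{(k)}, \mep_0)$ on $(\nmax^{(k)}, \infty)$.

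For the middle (curve) piece on $[\nmin^{(k)}, \nmax^{(k)}]$ I would apply exactly the stitching/peeling argument of \cref{thm::boundary_crossing_GLR,thm::CS_simple}: partitioning $[\nmin^{(k)}, \nmax^{(k)}]$ into $\lceil \log_\eta(\nmax^{(k)}/\nmin^{(k)})\rceil$ geometric epochs and bounding one line-crossing per epoch via Ville's inequality yields the term $\inf_{\eta>1}\lceil \log_\eta(\nmax^{(k)}/\nmin^{(k)})\rceil e^{-g^{(k)}/\eta}$, which is the $k$-th stitching term in \eqref{eq::const_bound_general_multi}. The two tail pieces are tangent-line crossings: since, under any $\mep \leq \mep_0$, both $\LR_\n(\mep_2^{(k)}, \mep_0)$ and $\LR_\n(\mep_1^{(k)}, \mep_0)$ are nonnegative supermartingales with initial value one, Ville's inequality bounds each single-line crossing by $e^{-g^{(k)}}$.

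The crux, and the step I expect to be the main obstacle, is to combine these tangent-line crossings across consecutive intervals so that each interval contributes only the single factor $e^{-g^{(k)}}\mathbbm{1}(\nmin^{(k)} > \nmax^{(k-1)})$ rather than the two tail terms that a naive per-interval application of \cref{thm::CS_simple} would produce. The idea, mirroring how \cref{thm::CS_simple} already merges its left and right tails into the single term $e^{-g}\mathbbm{1}(\nmin > \n_0)$, is to reorganize the union over $(k,\n)$ by time: the only genuinely new tangent-line region contributed by interval $k$ is the gap $(\nmax^{(k-1)}, \nmin^{(k)})$, which is empty precisely when $\nmin^{(k)} = \nmax^{(k-1)}$, while the portion of interval $k$'s left tail over the earlier times $[1, \nmax^{(k-1)}]$ must be shown to be absorbable into the tail budget already spent on intervals $1,\dots,k-1$, and the final right tail $(\nmax^{(K)}, \infty)$ is absorbed into the $K$-th interval's own bound (just as \cref{thm::CS_simple} carries no standalone right-tail term). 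Making this absorption rigorous — exhibiting, for each gap, a single mean-one supermartingale that dominates the relevant likelihood ratios on the overlapping time ranges and verifying that no crossing is counted twice — is the technical heart of the argument; the remaining bookkeeping (the convention $\nmax^{(0)} = \n_0$ and the telescoping of the gap indicators) is routine. Summing the $K$ stitching terms and the $K$ gap terms then yields \eqref{eq::const_bound_general_multi}.
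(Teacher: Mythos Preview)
The paper's own proof is a single sentence: the corollary ``is a direct consequence of \cref{thm::CS_simple} based on the union bound argument.'' In other words, apply \cref{thm::CS_simple} once for each $k\in[K]$ and sum the $K$ bounds. You are working far harder than this.

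Your concern about the indicators is legitimate. Applying \cref{thm::CS_simple} as a black box to interval $k$ yields the term $e^{-g^{(k)}}\mathbbm{1}\bigl(\nmin^{(k)} > \n_0^{(k)}\bigr)$, where $\n_0^{(k)}$ is the threshold defined in \cref{thm::CS_simple} for boundary value $g^{(k)}$, not the stated $e^{-g^{(k)}}\mathbbm{1}\bigl(\nmin^{(k)} > \nmax^{(k-1)}\bigr)$. When the intervals abut ($\nmin^{(k)}=\nmax^{(k-1)}$) the stated indicator vanishes while the one the naive union bound actually delivers need not. The paper does not comment on this; it simply asserts the union bound and moves on.

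Your absorption idea is the natural route to recover the sharper indicator, but you leave the key step as a promise, and it is not automatic in the generality of the corollary. When the $g^{(k)}$ vary, abutting intervals do \emph{not} share a tangent line: $\D(\mep_2^{(k)},\mep_0)=g^{(k)}/\nmin^{(k)}$ while $\D(\mep_1^{(k-1)},\mep_0)=g^{(k-1)}/\nmax^{(k-1)}$, so $\mep_2^{(k)}\neq\mep_1^{(k-1)}$ unless $g^{(k)}=g^{(k-1)}$. Interval $k$'s left-tail crossing is then not a crossing of any line already paid for in interval $k{-}1$'s stitching budget, and the single dominating supermartingale you allude to does not obviously exist. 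You would need to argue instead via the curve-versus-tangent inequality that the left-tail event for $k$ is contained in the \emph{union} of the full events for $1,\dots,k{-}1$, which is a different and more delicate containment than the one you describe. The paper does not attempt any of this; its one-line proof really only supports the looser indicator, and you should either carry the absorption through rigorously or settle for that version.
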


\begin{remark}
If $\nmax^{(k-1)} = \nmin^{(k)}$ for each $k \in [K]$, the inequality \eqref{eq::const_bound_general_multi} can be viewed as a piecewise constant boundary crossing probability of the form 
\begin{equation}
\begin{aligned}
    \mathbb{P}_{\mep_0} \left(\exists \n \geq 1:  \sup_{z \geq \mep_1^{(K)}}\log \left(\LR_\n(z,\mep_0)\vee 1\right) \geq g_c(\n) \right) \leq  \sum_{k=1}^K \left[\inf_{\eta >1} \left\lceil \log_\eta \left(\frac{\nmax^{(k)}}{\nmin^{(k)}}\right)\right\rceil e^{-g^{(k)} / \eta}\right],
\end{aligned}
\end{equation}
where $g_c(\n)$ is a piecewise constant function defined by
\begin{equation} 
\begin{aligned}
g_c(\n):= &\sum_{k=1}^K \min\left\{g^{(k-1)}, g^{(k)}\right\} \mathbbm{1}\left(n = \nmin^{(k)}\right)\\
&\qquad+g^{(k)}\mathbbm{1}\left(n \in (\nmin^{(k)} , \nmax^{(k)})\right)+ \min\left\{g^{(k)}, g^{(k+1)}\right\} \mathbbm{1}\left(n = \nmax^{(k)}\right).
\end{aligned}
\end{equation}
Here, we set $g^{(0)} := g^{(1)}$ and $g^{(K+1)}:= g^{(K)}$. Note that if all boundary values $\{g^{(k)}\}_{k=1}^K$ are equal to each other, then the above inequality recovers the constant boundary-crossing inequality in \cref{subSec::well-separated} with a specifically chosen alternative $\mep_1^{(K)}$. In fact, the right hand side of the above inequality provides a sharper bound, since it allows us to choose different $\eta$ for each $k$.
\end{remark}

In what follows, we focus on the single time interval case in \cref{thm::CS_simple} for simplicity, but the arguments can be straightforwardly extended to the case of multiple time intervals in \cref{cor::CS_multi}.  

To convert the upper bound on the boundary-crossing probability in \eqref{eq::const_bound_general}, for a given $\alpha \in (0,1]$, let $g_\alpha$ be the constant boundary which makes the upper bound in \eqref{eq::const_bound_general} less than or equal to $\alpha$. The boundary value $g_\alpha$ can be efficiently computed  using the fact 
\begin{equation} \label{eq::CS_bound_equiv_expression}
 \inf_{\eta >1} \left\lceil \log_\eta \left(\frac{\nmax}{\nmin}\right)\right\rceil e^{-g / \eta} = \inf_{k\in\mathbb{N}} k \exp\left\{-g\left(\frac{\nmin}{\nmax}\right)^{1/k}\right\}.
\end{equation}
As shown in the previous section, in the moderate confidence regime where $\alpha$ is fixed and $\nmax / \nmin \to \infty$, we can check that $g_\alpha = O\left(\log\log\left(\nmax/\nmin\right)\right)$. Therefore, in practice, we can still compute a boundary value $g_\alpha$ even for an exponential large $\nmax/\nmin$. Also, since  $g_\alpha$ scales with respect to the ratio between two ends points of the time interval instead of its length, the confidence sequence can be designed to be uniformly close to the pointwise Chernoff bound on a time interval of exponentially large length. This property is of course especially useful when designing a large-scale experimentation. 

For a given $g_\alpha$, the corresponding confidence sequence is
\begin{equation} \label{eq::CS_detailed}
    \CI_n := \begin{cases}  \left\{\mep_0 \in \Mep : \begin{array}{l}
         L_2(\bar{X}_\n -\mep_2) 
        < \frac{g_\alpha}{\n} - \frac{g_\alpha}{\nmin}
  \end{array}  
 \right\}  &\mbox{if } \n \in [1,\nmin), \\
 \left\{\mep_0 \in \Mep: 
\begin{array}{l}
 \D(\bar{X}_\n, \mep_0)
 < \frac{g_\alpha}{\n}~~\text{or}~~\bar{X} < \mep_0
 \end{array}
 \right\} &\mbox{if } \n \in [\nmin, \nmax], \\
\left\{\mep_0 \in \Mep:
    \begin{array}{l}
         L_1(\bar{X}_\n -\mep_1) 
        < \frac{g_\alpha}{\n} - \frac{g_\alpha}{\nmax}
  \end{array}  
  \right\} &\mbox{if } \n \in (\nmax, \infty),
  \end{cases}
\end{equation}
where $L_i := \nabla_z \D(z, \mep_0)\mid_{z = \mep_i}$ for each $i = 1, 2$. Recall that both $\mep_1$ and $\mep_2$ depend on $\mep_0$ via \eqref{eq::mep_def}. 

As a sanity check, we can verify that $[\bar{X}_\n, \infty) \subset \CI_\n$ for each $\n \in \mathbb{N}$, since  $\nabla_z \D(z, \mep_0) > 0 $ for any $z > \mep_0$. We can also check that the width of the confidence sequence (the distance from the left end point to the sample mean) does not shrink to zero even if $\n \to \infty$ which implies that the confidence sequence can be loose outside of the target time interval $[\nmin, \nmax]$. Therefore, in practice, we recommend using the time interval of large enough size to cover the entire intended duration of  experimentation.  

Note that, for additive sub-$\psi$ classes, $\mep_1$ and $\mep_2$ have the following simple relationships:
\begin{align}
\mep_1 &= \mep_0 + \psi_+^{*-1}\left(\frac{g_\alpha}{\nmax}\right) := \mep_0+ \Delta_1\\
\mep_2 &= \mep_0 + \psi_+^{*-1}\left(\frac{g_\alpha}{\nmin}\right) := \mep_0+\Delta_2,
\end{align}
where $\psi_+^{*-1}$ is the inverse function of $z \mapsto \psi^*(z)\mathbbm{1}(z \geq 0)$. We can also check $L_i = \nabla \psi^*(\Delta_i)$ for each $i = 1, 2$. 

Using this observation, we can derive confidence sequence for additive sub-$\psi$ families in explicit form:
\begin{equation} \label{eq::CS_additive}
    \CI_n := \begin{cases}  \left(\bar{X}_\n - \Delta_2 -   L_2^{-1}\left(\frac{g_\alpha}{\n} - \frac{g_\alpha}{\nmin}\right), \infty \right) &\mbox{if } \n \in [1,\nmin), \\
 \left( \bar{X}_\n - \psi_+^{*-1}\left(\frac{g_\alpha}{\n}\right) , \infty \right) &\mbox{if }  \n \in [\nmin, \nmax], \\
 \left(\bar{X}_\n   - \Delta_1-  L_1^{-1}\left(\frac{ g_\alpha}{\n} - \frac{ g_\alpha}{\nmax}\right), \infty \right) &\mbox{if } \n \in (\nmax, \infty).
  \end{cases}
  \end{equation} 
 For sub-Gaussian distributions with a parameter $\sigma^2$, this reduces to
\begin{equation} \label{eq::CS_sub_G}
    \CI_n := \begin{cases}  \left(\bar{X}_\n - \sigma\sqrt{\frac{2g_\alpha}{\nmin}}\left[ \frac{1}{2}\left(\frac{\nmin}{\n}+1\right)\right], \infty\right) &\mbox{if } \n \in [1,\nmin), \\
 \left( \bar{X}_\n - \sigma\sqrt{\frac{2g_\alpha}{\n}}, \infty\right) &\mbox{if }  \n \in [\nmin, \nmax], \\
 \left(\bar{X}_\n - \sigma\sqrt{\frac{2g_\alpha}{\nmax}}\left[  \frac{1}{2}\left(\frac{\nmax}{\n}+1\right)\right], \infty\right) &\mbox{if } \n \in (\nmax, \infty).
  \end{cases}
  \end{equation}

\begin{remark} \label{rmk::CS_for_varying_mean} 
For additive sub-$\psi$ distributions, the sequence of intervals defined in \eqref{eq::CS_additive} can be applied to the time-varying mean case described in \cref{remark::thm1_for_process}. That is, for any chosen $\alpha \in (0,1]$, the confidence sequence in \eqref{eq::CS_additive} satisfies 
\begin{equation}
    \mathbb{P}\left(\exists \n \geq 1 : \bar{\mep}_\n \in \CI_\n\right) \geq 1-\alpha,
\end{equation}
 where $(\bar{\mep}_\n)_{n \geq 1}$ is the sequence of running averages of conditional means, defined by 
\begin{equation}
\bar{\mep}_\n := \frac{1}{\n}\sum_{i=1}^\n \mathbb{E}\left[X_i \mid \mathcal{F}_{i-1}\right].
\end{equation}
\end{remark}

In general, there is no closed form expression for the confidence sequence. However, if a sub-$\psi_\Mep$ family of distributions has order-preserving Bregman divergences, then for any given data, the mapping $\mep_0 \mapsto \D(\bar{X}_\n, \mep_0)\mathbbm{1}\left(\bar{X}_\n \geq \mep_0\right)$ is nonincreasing. Therefore, on the target time interval $[\nmin, \nmax]$, the confidence sequence given in \eqref{eq::CS_detailed} is an open-interval  and it can be efficiently computed by binary search. 

Outside of the target time interval, however, the confidence sequence is not necessarily an open interval. To avoid this potentially undesirable feature, below we introduce a sufficient condition under which we can guarantee that an EF-like sub-$B$ family of distributions admits a confidence sequence consisting of open intervals. 
\begin{proposition} \label{prop::ef-like-convex}
For a given EF-like sub-$B$ family of distributions, suppose $\nabla B$ is a convex function. Then, for any data, the mapping $\mep_0 \mapsto \LR_\n(\mep_1, \mep_0)$ is nonincreasing on $(-\infty, \bar{X}_\n] \cap \Mep$ where $\mep_1$ is a function of $\mep_0$, and any $d>0$,  as defined by the solution of the equation
\begin{equation}
    \D(\mep_1,\mep_0)\mathbbm{1}(\mep_1 \geq  \mep_0) = d.
\end{equation}
Consequently, the corresponding confidence sequence in \eqref{eq::CS_detailed}  is an open-interval for each $\n$.
\end{proposition}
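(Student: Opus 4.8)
The plan is to reduce everything to the natural parametrization of the underlying convex function $B$ and then to a single-variable monotonicity computation. First I would record the EF-like identities: since $\psi_{\mep_0}(\lambda) = B(\lambda + \theta_{\mep_0}) - B(\theta_{\mep_0})$ with $\theta_{\mep} := (\nabla B)^{-1}(\mep)$, a short Legendre-transform computation gives $\psi^*_{\mep_0}(y) = B^*(y) - \theta_{\mep_0} y + B(\theta_{\mep_0})$, so that $\lambda_1 = \nabla\psi^*_{\mep_0}(\mep_1) = \theta_{\mep_1} - \theta_{\mep_0}$. Substituting into \eqref{eq::LR-like_def} yields $\frac{1}{\n}\log\LR_\n(\mep_1,\mep_0) = (\theta_{\mep_1} - \theta_{\mep_0})\bar{X}_\n - B(\theta_{\mep_1}) + B(\theta_{\mep_0})$, while the same parametrization gives $\D(\mep_1,\mep_0) = (\theta_{\mep_1} - \theta_{\mep_0})\mep_1 - B(\theta_{\mep_1}) + B(\theta_{\mep_0})$. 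The crucial simplification comes from subtracting these two expressions and invoking the defining constraint $\D(\mep_1,\mep_0) = d$: writing $h(\mep_0) := \frac{1}{\n}\log\LR_\n(\mep_1(\mep_0),\mep_0)$, we obtain the clean form
\[
h(\mep_0) = d + (\theta_{\mep_1} - \theta_{\mep_0})(\bar{X}_\n - \mep_1).
\]

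Second, I would differentiate $h$ in $\mep_0$. Differentiating the constraint $\D(\mep_1,\mep_0) = d$ implicitly (using $\nabla B(\theta_{\mep_i}) = \mep_i$ and $\tfrac{d\theta_{\mep_i}}{d\mep_i} = 1/\nabla^2 B(\theta_{\mep_i})$) gives
\[
\frac{d\mep_1}{d\mep_0} = \frac{(\mep_1 - \mep_0)/\nabla^2 B(\theta_{\mep_0})}{\theta_{\mep_1} - \theta_{\mep_0}} \geq 0,
\]
so $\mep_1$ is nondecreasing in $\mep_0$, as expected. Plugging this back and factoring out the positive quantity $1/\nabla^2 B(\theta_{\mep_0})$, the derivative reduces after cancellation to
\[
\frac{dh}{d\mep_0} = \frac{1}{\nabla^2 B(\theta_{\mep_0})}\bigl[\, c\,(\mep_1 - \bar{X}_\n) - (\mep_1 - \mep_0)\,\bigr], \qquad c := 1 - \frac{\mep_1 - \mep_0}{\nabla^2 B(\theta_{\mep_1})(\theta_{\mep_1} - \theta_{\mep_0})}.
\]

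The heart of the argument, and the step where the hypothesis that $\nabla B$ is convex enters, is bounding $c$. The quantity $\frac{\mep_1 - \mep_0}{\theta_{\mep_1} - \theta_{\mep_0}} = \frac{\nabla B(\theta_{\mep_1}) - \nabla B(\theta_{\mep_0})}{\theta_{\mep_1} - \theta_{\mep_0}}$ is exactly the slope of a chord of $\nabla B$ over $[\theta_{\mep_0}, \theta_{\mep_1}]$ (note $\theta_{\mep_1} > \theta_{\mep_0}$, since $d > 0$ forces $\mep_1 > \mep_0$). Convexity of $\nabla B$ makes this chord slope at most the derivative of $\nabla B$ at the right endpoint, namely $\nabla^2 B(\theta_{\mep_1})$; hence $c \in [0,1]$. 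With this in hand I would finish by a sign analysis of the bracket using $\mep_0 \leq \bar{X}_\n$: if $\mep_1 \geq \bar{X}_\n$ then $c(\mep_1 - \bar{X}_\n) \leq \mep_1 - \bar{X}_\n \leq \mep_1 - \mep_0$, while if $\mep_1 < \bar{X}_\n$ then $c(\mep_1 - \bar{X}_\n) \leq 0 \leq \mep_1 - \mep_0$; in either case the bracket is nonpositive, so $dh/d\mep_0 \leq 0$. Thus $\LR_\n(\mep_1,\mep_0) = e^{\n h(\mep_0)}$ is nonincreasing on $(-\infty, \bar{X}_\n] \cap \Mep$.

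Finally, monotonicity delivers the shape of the confidence set. Since $\mep_0 \mapsto \LR_\n(\mep_1,\mep_0)$ is nonincreasing on $(-\infty, \bar{X}_\n]$, the sublevel set $\{\mep_0 \leq \bar{X}_\n : \log\LR_\n(\mep_1,\mep_0) < g_\alpha\}$ is an interval of the form $(c_\n, \bar{X}_\n]$; combined with the already-noted fact that $[\bar{X}_\n, \infty) \subset \CI_\n$, this shows each $\CI_\n$ in \eqref{eq::CS_detailed} is a single open interval. The main obstacle I anticipate is largely bookkeeping: keeping the implicit-differentiation algebra organized so that the cancellations producing the clean bracket are transparent, and confirming that $\mep_1(\mep_0)$ remains inside $\Mep$ over the relevant range (outside of which the corresponding $\CI_\n$ constraint is vacuous and poses no issue).
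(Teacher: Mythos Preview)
Your proof is correct and follows essentially the same approach as the paper: derive the clean form $h(\mep_0) = d + (\theta_{\mep_1} - \theta_{\mep_0})(\bar{X}_\n - \mep_1)$, implicitly differentiate the Bregman constraint to obtain $d\mep_1/d\mep_0$, and conclude via a chord-slope comparison that uses convexity of $\nabla B$. The only cosmetic difference is that the paper works in the dual parametrization $\nabla B^*$ (which equals your $\theta_\mep$) and organizes the final sign check as two separate rearrangements of $\nabla h$, whereas you package it through the single coefficient $c \in [0,1]$; the underlying case split on $\mep_1$ versus $\bar{X}_\n$ is identical.
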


The proof of \cref{prop::ef-like-convex} can be found in Appendix~\ref{appen::proof_of_props}. For example, Poisson, Exponential, and Negative binomial (with a known number of failures) distributions are sub-classes of EF-like sub-$B$ families satisfying the condition in \cref{prop::ef-like-convex}, and thus we can efficiently compute confidence sequences by using binary search. On the other hand, Bernoulli distribution does not satisfies the condition, and thus we need to use a grid-search to compute the confidence sequence on the outside of the target time interval. 

\subsection{Tighter confidence sequences via discrete mixtures}

Confidence sequences based on nonnegative mixture of martingales have been extensively studied \cite{wald1945sequential,darling1967confidence, lai1976confidence, victor2007pseudo, kaufmann2018mixture, howard2021time}. However, as discussed in \cite{howard2021time}, different choices of mixing methods yield different boundaries, and each confidence sequence is typically tighter and looser on some time intervals than others, so there is no time-uniformly dominating one. 

From this point of view, it is an interesting question  to choose a proper mixing method to obtain a confidence sequence with a desired shape, satisfying application-specific constraints. This subsection explains how we can use confidence sequences based on GLR-like statistics in the previous subsection to design discrete mixture-based confidence sequences that are almost uniformly close to the Chernoff bound on finite target time intervals.

The confidence sequences in the previous subsection are built on the constant boundary-crossing probability in \cref{thm::CS_simple}, which is based on a GLR-like curve-crossing time. 
In this subsection, we will demonstrate that, for any GLR-like curve-crossing time, we can also build a discrete mixture of martingales such that the corresponding crossing time is always smaller than or equal to the GLR-like one.
Therefore, we can always construct a tighter confidence sequence by using the discrete mixture-crossing time compared to the GLR-like one. Consequently, the shape of the obtained discrete mixture martingale based confidence sequence is dominated by the GLR-like based one, whose overall shape can be tailored to the specific application at hand.

To elaborate, for any $\alpha \in (0,1]$ and target time interval $[\nmin, \nmax]$, let $g_\alpha$ be a positive value such that the upper bound in \eqref{eq::const_bound_general} is less than or equal to $\alpha$. If $\n_0 < \nmin = \nmax$ then both GLR-like and discrete mixture-crossing events are equal to the line-crossing event given by
\begin{equation}
    \left\{\exists n \geq 1 : \begin{array}{l}
         \D(\mep_1,\mep_0)
         + \nabla_z \D(z, \mep_0)\mid_{z = \mep_1}(\bar{X}_\n -\mep_1)  
 \geq \frac{g_\alpha}{\n}
  \end{array}   \right\},
\end{equation}
which is equal to the pointwise Chernoff bound at $\n = \nmin = \nmax$, and thus $g_\alpha$ can be chosen as $\log(1/\alpha)$. Therefore, in the rest of this subsection, we only consider the nontrivial case  $\n_0 \leq \nmin < \nmax$, where $g_\alpha$ is a positive value such that 
\begin{equation}
     e^{-g_\alpha}\mathbbm{1}(\nmin > \n_0) + \inf_{\eta >1} \left\lceil \log_\eta \left(\frac{\nmax}{\nmin}\right)\right\rceil e^{-g_\alpha / \eta} ~ \leq ~ \alpha.
\end{equation}
For a fixed $g_\alpha$, let $\eta_\alpha > 1$ be the value attaining the infimum in the LHS of the above inequality. From the equivalent expression of $\inf_{\eta >1} \left\lceil \log_\eta \left(\frac{\nmax}{\nmin}\right)\right\rceil e^{-g_\alpha / \eta}$ in \eqref{eq::CS_bound_equiv_expression}, we have $\eta_\alpha = (\nmax / \nmin)^{1/K_\alpha}$ where 
\begin{equation}
    K_\alpha := \argmin_{k\in\mathbb{N}} k \exp\left\{-g_\alpha\left(\frac{\nmin}{\nmax}\right)^{1/k}\right\}.
\end{equation}
Next, for each $k = 0, 1, \dots, K_\alpha$, let $z_k$ be the function of $\mep_0$ defined as the solution of the following equation:
\begin{equation}
    \D(z_k, \mep_0) = \frac{g_\alpha}{\nmin \eta_\alpha^k},~~z_k > \mep_0.
\end{equation}
Finally, for given $n$ samples, define a nonnegative random variable $M_\n(\mep_0; \alpha)$ by
\begin{equation} 
\begin{aligned}
    M_\n(\mep_0; \alpha) := e^{-g_\alpha}  \mathbbm{1}\left(\nmin >  \n_0\right) \LR_\n\left(z_0(\mep_0), \mep_0\right) +  e^{-g_\alpha / \eta_\alpha}\sum_{k=1}^{K_\alpha}\LR_\n\left(z_k(\mep_0), \mep_0\right).
\end{aligned}
\end{equation}
Above, each $\LR_\n(z_k(\mep_0), \mep_0)$ is the LR-like statistic for $H_0: \mep = \mep_0$ vs $H_1: \mep = z_k(\mep_0)$, given by
\begin{equation}
\begin{aligned}
    \LR_\n(z_k(\mep_0), \mep_0) = \exp\left\{\n \left[ \frac{g_\alpha}{\nmin\eta_\alpha^k}+  L_k\left[\bar{X}_\n - z_k(\mep_0)\right]\right]\right\},
\end{aligned}
\end{equation}
where $L_k := \nabla \D\left(z, \mep_0\right)\mid_{z = z_k(\mep_0)} $ for each $k = 0, 1, \dots, K_\alpha$. In particular, for additive sub-$\psi$ family, each $\LR_\n(z_k(\mep_0), \mep_0)$ can be expressed as
\begin{equation}
\begin{aligned}
     \LR_\n(z_k(\mep_0), \mep_0) = \exp\left\{\n \left[ \frac{g_\alpha}{\nmin\eta_\alpha^k} + L_k \left[\bar{X}_\n - \mep_0- \Delta_k\right]\right]\right\},
\end{aligned}
\end{equation}
where $\Delta_k := \psi_+^{*-1}\left(\frac{g_\alpha}{\nmin\eta_\alpha^k}\right)$ and $L_k := \nabla \psi^*\left(\Delta_k)\right)$.

Now, let $M_0(\mep_0; \alpha) :=  e^{-g_\alpha}\mathbbm{1}(\nmin > \n_0) + K_\alpha e^{-g_\alpha / \eta_\alpha} \in (0, \alpha]$. Then, under any sub-$\psi_{\mep_0}$ distribution, $\{M_\n(\mep_0; \alpha) / M_0(\mep_0 ; \alpha) \}_{\n \geq 0}$ is a nonnegative supermartingale with respect to the natural filtration. Therefore, by letting
\[
 \CI_\n^M := \left\{\mep_0 \in \Mep: M_\n(\mep_0; \alpha) / M_0(\mep_0; \alpha) < 1/\alpha\right\},   
\]
we have a discrete mixture confidence sequence which is uniformly tighter than the GLR-like one:
\begin{corollary} \label{cor::dis_mixture}
For any $\alpha \in (0,1]$, the sequence of intervals $\left\{\CI_\n^M\right\}_{\n \in \mathbb{N}}$ is a valid level $\alpha$ confidence sequence satisfying
\begin{equation} \label{eq::dis_mixture_CI_coverage}
    \mathbb{P}_\mep \left(\forall \n : \mep \in \CI_\n^M \right) \geq 1-\alpha,~~\forall \mep \in \Mep.
\end{equation}
Furthermore,  we have 
$\CI_\n^M \subset \CI_\n$ for each $\n \in \mathbb{N}$
where $\{\CI_\n\}_{\n \in \mathbb{N}}$ is the confidence sequence based on GLR-like statistics in \eqref{eq::CS_detailed} which is uniformly close to the pointwise Chernoff bound on the target time interval $[\nmin, \nmax]$. 
\end{corollary}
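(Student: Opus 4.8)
The plan is to establish the two assertions of \cref{cor::dis_mixture} separately: the coverage bound \eqref{eq::dis_mixture_CI_coverage}, and the pathwise inclusion $\CI_\n^M \subset \CI_\n$. The first is essentially a consequence of the supermartingale claim stated just before the corollary together with Ville's inequality, so the real work lies in the second.

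For coverage I would first record that, for each $k$, the LR-like statistic $\LR_\n(z_k(\mep_0), \mep_0) = \exp\{\n[\lambda_k \bar{X}_\n - \psi_{\mep_0}(\lambda_k)]\}$ with $\lambda_k := \nabla \psi_{\mep_0}^*(z_k)$ is a nonnegative supermartingale started at $1$ under any distribution in $\mathcal{P}_{\psi_{\mep_0}}$: by the sub-$\psi_{\mep_0}$ property \eqref{eq:psi_mu} each increment satisfies $\mathbb{E}[e^{\lambda_k X_i}\mid \mathcal{F}_{i-1}] \leq e^{\psi_{\mep_0}(\lambda_k)}$. Since $M_\n(\mep_0;\alpha)$ is a nonnegative linear combination of these, it is itself a nonnegative supermartingale with initial value $M_0(\mep_0;\alpha)$, and hence $M_\n/M_0$ is a nonnegative supermartingale started at $1$. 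Evaluating at $\mep_0 = \mep$ under $\mathbb{P}_\mep$, Ville's inequality gives $\mathbb{P}_\mep(\exists \n : M_\n(\mep;\alpha)/M_0(\mep;\alpha) \geq 1/\alpha) \leq \alpha$; taking complements and invoking the definition of $\CI_\n^M$ yields \eqref{eq::dis_mixture_CI_coverage}, where I use $M_0(\mep;\alpha) \in (0,\alpha]$.

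For the inclusion I would argue the contrapositive pointwise in $\n$: fixing the data and a time $\n$, I will show that $\mep_0 \notin \CI_\n$ forces $M_\n(\mep_0;\alpha) \geq M_0(\mep_0;\alpha)/\alpha$. Because $M_0 \leq \alpha$, it suffices to exhibit a single summand of $M_\n$ that is at least $1$. Using the decomposition \eqref{eq::GLR_like_CS_explicit}, the event $\mep_0 \notin \CI_\n$ falls into exactly one of three regimes according to whether $\n < \nmin$, $\n \in [\nmin,\nmax]$, or $\n > \nmax$. The two outer regimes are the easy ones: a crossing of the tangent half-space $H(\mep_2,\mep_0)$ (resp.\ $H(\mep_1,\mep_0)$) is precisely the statement $\log\LR_\n(z_0,\mep_0) \geq g_\alpha$ (resp.\ $\log\LR_\n(z_{K_\alpha},\mep_0) \geq g_\alpha$), since these half-spaces are tangent to $R(\mep_0)$ at $(\mep_2, g_\alpha/\nmin)$ and $(\mep_1, g_\alpha/\nmax)$ and $z_0 = \mep_2$, $z_{K_\alpha} = \mep_1$; the corresponding weighted summand $e^{-g_\alpha}\LR_\n(z_0,\mep_0)$ (resp.\ $e^{-g_\alpha/\eta_\alpha}\LR_\n(z_{K_\alpha},\mep_0)$) is then $\geq 1$, and the indicator $\mathbbm{1}(\nmin>\n_0)$ matches the fact that the pre-$\nmin$ regime is vacuous when $\nmin = \n_0$.

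The crux is the middle regime, where $\mep_0 \notin \CI_\n$ reads $\bar{X}_\n \geq \mep_0$ and $\n\D(\bar{X}_\n,\mep_0) \geq g_\alpha$. Here I would use $\eta_\alpha = (\nmax/\nmin)^{1/K_\alpha}$ and the defining relation $\D(z_k,\mep_0) = g_\alpha/(\nmin\eta_\alpha^k)$ to locate the epoch $k \in \{1,\dots,K_\alpha\}$ with $\n \in [\nmin\eta_\alpha^{k-1},\, \nmin\eta_\alpha^k]$. Since $\n \leq \nmin\eta_\alpha^k$ gives $g_\alpha/\n \geq \D(z_k,\mep_0)$, the crossing forces $\D(\bar{X}_\n,\mep_0) \geq \D(z_k,\mep_0)$, hence $\bar{X}_\n \geq z_k$ by monotonicity of $z \mapsto \D(z,\mep_0)$ on $[\mep_0,\infty)$. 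By \eqref{eq::LR-like_tangent_line_form} the map $z \mapsto \frac{1}{\n}\log\LR_\n(z_k,\mep_0)$ evaluated along the tangent to $\D(\cdot,\mep_0)$ at $z_k$ is increasing with value $\D(z_k,\mep_0)$ at $z_k$, so at $\bar{X}_\n \geq z_k$ it is at least $\D(z_k,\mep_0)$; combining with $\n \geq \nmin\eta_\alpha^{k-1}$ gives $\log\LR_\n(z_k,\mep_0) \geq \nmin\eta_\alpha^{k-1}\D(z_k,\mep_0) = g_\alpha/\eta_\alpha$, so the weighted summand $e^{-g_\alpha/\eta_\alpha}\LR_\n(z_k,\mep_0) \geq 1$. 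I expect the main obstacle to be exactly this bookkeeping in the middle regime: aligning the epoch endpoints $\nmin\eta_\alpha^k$ with the definition of $z_k$, checking that a single tangent-line bound $g_\alpha/\eta_\alpha$ holds uniformly over the whole epoch (the step $\bar{X}_\n \geq z_k$ uses $\n \leq \nmin\eta_\alpha^k$ while the final lower bound uses $\n \geq \nmin\eta_\alpha^{k-1}$), and verifying the boundary cases at $k = K_\alpha$ and at $\n \in \{\nmin,\nmax\}$ where the half-space and curve descriptions of the crossing must coincide.
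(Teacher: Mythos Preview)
Your proposal is correct and follows essentially the same route as the paper: coverage via Ville's inequality applied to the supermartingale $M_\n/M_0$, and the inclusion $\CI_\n^M \subset \CI_\n$ by showing that a GLR-like crossing at time $\n$ forces one weighted summand of $M_\n$ to be at least $1$, via the same epoch/peeling argument that underlies \cref{lem::relationships} and the proof of \cref{thm::CS_simple}. The only cosmetic difference is that you carry out the epoch argument explicitly and pointwise in $\n$, whereas the paper phrases the reduction as an inclusion of ``$\exists \n$'' events and then defers to \cref{lem::relationships}; your formulation is in fact slightly cleaner, since the pointwise inclusion $\CI_\n^M \subset \CI_\n$ genuinely requires the same-$\n$ implication you prove, not merely a stopping-time comparison.
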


Since $g_\alpha$ and $K_\alpha$ do not depend on $\mep_0$, using \cref{prop::ef-like-convex} we can check that each $\CI_\n^M$ is an open interval for every additive sub-$\psi$ family and EF-like sub-$B$ family of distributions with convex $\nabla B$. Therefore, we can efficiently compute each confidence interval by binary-search. However, for  general sub-$\psi_\Mep$ family of distributions, we may need to rely on grid-search methods to compute each confidence interval.

\subsection{Examples of sub-Gaussian confidence sequences} \label{subSec::experiment}

To illustrate the practicality of confidence sequences based on GLR-like statistics and corresponding discrete mixtures, in \cref{fig::CI_compare}, we calculate ratios of widths of confidence intervals to the pointwise and asymptotically valid normal confidence intervals based on the central limit theorem.\footnote{The R code to reproduce all the plots and simulation results of the paper is available on the repository \url{https://github.com/shinjaehyeok/SGLRT_paper}.}
 Here, we use the sub-Gaussian with $\sigma = 1$ for simplicity and set $\alpha = 0.025$. 
\begin{figure}
    \begin{center}
    \includegraphics[scale =  0.75]{./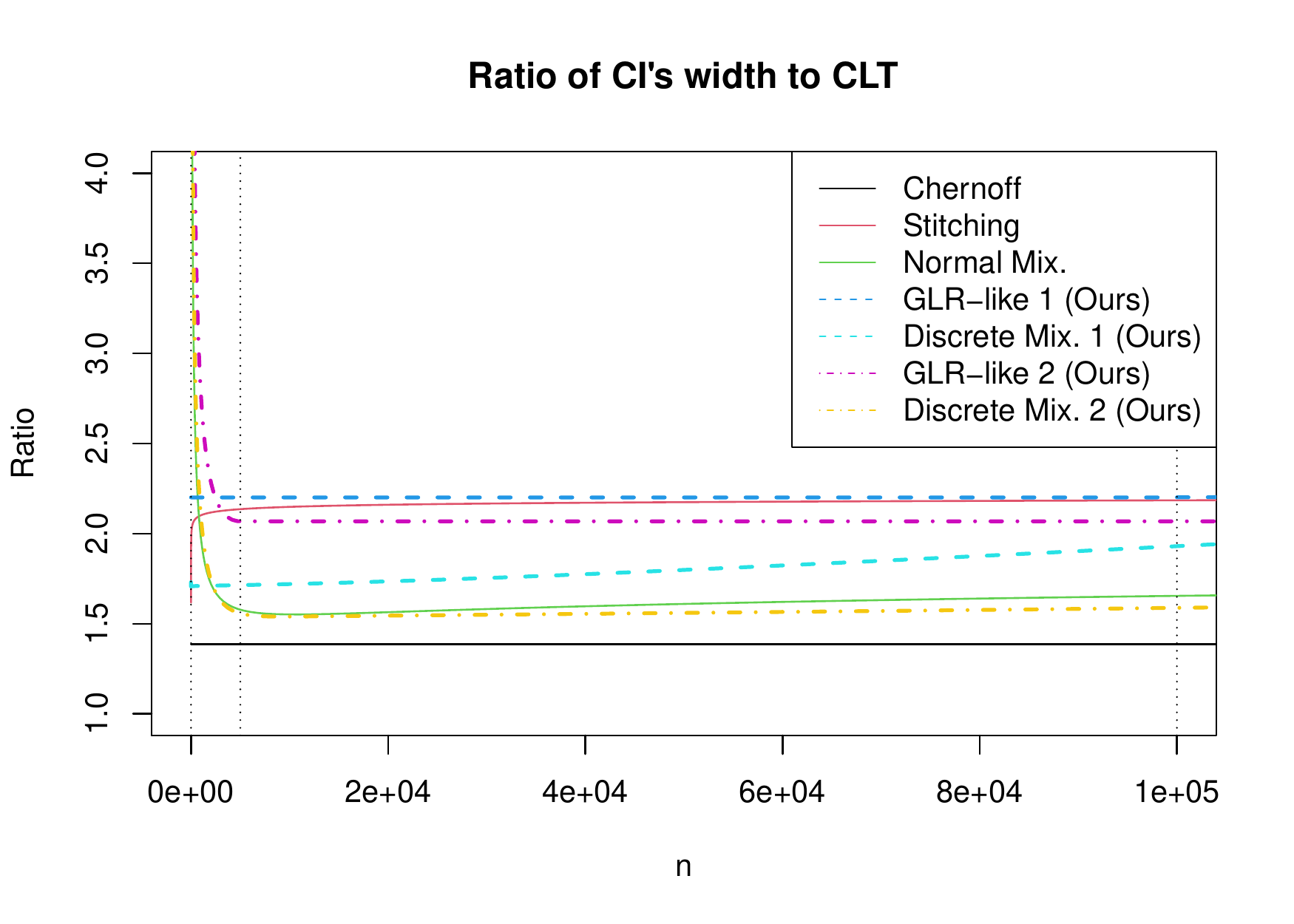}
    \end{center}
    \caption{Ratio of widths of the confidence intervals to the pointwise and asymptotically valid normal confidence intervals based on the central limit theorem. The black solid line corresponds to the pointwise and nonasymptotically valid Chernoff bound. Red and green solid lines come from the stitching and normal mixture method in \cite{robbins1970statistical,howard2021time}.  The rest of lines are based on our GLR-like confidence sequences for sub-Gaussian distributions in \eqref{eq::CS_sub_G} and their discrete mixture counterparts with different choices of target time intervals ($[1, 10^5]$ and $[5 \times 10^3, 4 \times 10^5]$). See \cref{subSec::experiment} for the details of these confidence sequences.}
    \label{fig::CI_compare}
\end{figure}
The black solid line corresponds to the pointwise, nonasymptotically valid Chernoff bound. Red and green solid lines come from the stitching and normal mixture methods in \cite{robbins1970statistical, howard2021time} where each confidence intervals for stitching $\CI_{\n}^{\mathrm{ST}}$ and normal mixture method $\CI_\n^{\mathrm{NM}}$ is given by
\begin{align*}
    \CI_{\n}^{\mathrm{ST}} &:= \left(\bar{X}_\n-  \frac{1.7}{\sqrt{\n}}\sqrt{\log\log(2\n) + 0.72 \log\left(\frac{5.2}{\alpha}\right)}, \infty \right) \\
    \CI_{\n}^{\mathrm{NM}} &:= \left(\bar{X}_\n-  \sqrt{2 \left(\frac{1}{\n} + \frac{\rho}{\n^2}\right) \log\left(\frac{1}{2\alpha} \sqrt{\frac{\n + \rho}{\rho} + 1}\right)}, \infty \right),
\end{align*}
where we set $\rho = 1260$ by following the setting in Figure 9 of \cite{howard2021time}. Note that the original normal mixture confidence interval in \cite{robbins1970statistical} did not have a closed form expression. In this subsection, we use the explicit closed form upper bound in \cite{howard2021time} for simplicity. The rest of lines are based on our GLR-like confidence sequences for sub-Gaussian distributions in \eqref{eq::CS_sub_G} and their discrete mixture counterparts. For  `GLR-like 1' and `Discrete Mixture 1' lines, we set $[\nmin, \nmax] = [1, 10^5]$. For `GLR-like 2' and 'Discrete Mixture 2', we use $[\nmin, \nmax] = [5\times10^3, 4 \times 10^5]$. Vertical dotted lines are corresponding to the lines $\n = 1, 5\times10^3, 10^5$.  We can check both GLR-like confidence sequences are uniformly close to the pointwise Chernoff bound on their target time intervals and, as \cref{cor::dis_mixture} tells, each discrete mixture counterpart has uniformly smaller widths of confidence intervals on its target time interval. 

\section{Discussion}\label{sec:discussion}
We have presented nonasymptotic analyses of sequential tests and confidence sequences based on GLR-like statistics, which can be viewed as a nonparametric generalization of the GLR statistic. Our main contribution is to provide a unified nonasymptotic framework for the sub-$\psi_\Mep$ family of distributions by leveraging a novel geometrical interpretation of GLR-like statistics and of the corresponding time-uniform concentration inequalities. In \cref{tab::summary}, we provide a technical summary of our results by displaying the boundary crossing probabilities for the sequential tests and confidence sequences developed in the paper along with the corresponding expected sample sizes for the moderate confidence regime (fixed $\alpha$, $\mep, \mep_1 \to \mep_0$). In the table, we set $D_1 := \D(\mep_1,\mep_0)$, $D_\mep := \D(\mep, \mep_0)$ and similarly for $D_1^* := \D^*(\mep_1,\mep_0)$ and $D_\mep^*:= \D^*(\mep_1,\mep_0)$.

\begin{table*}[ht]
	\small
\centering
\setlength\tabcolsep{1.5mm}
\def\arraystretch{2}
\caption{Summary of boundary crossing probabilities and expected sample sizes.\\ ($D_1 := \D(\mep_1,\mep_0)$, $D_\mep := \D(\mep, \mep_0)$, $D_1^* := \D^*(\mep_1,\mep_0)$ and $D_\mep^*:= \D^*(\mep_1,\mep_0)$)}
\label{tab::summary}
\begin{tabular}{ccc}
\hline
Task & Boundary crossing probability  & Sample size\\\addlinespace[2mm] \hline\hline 	\addlinespace[2mm]
\makecell{Test ($\mep_1 > \mep_0$) \\ (Sec.~\ref{subSec::well-separated})} & \makecell{$\begin{aligned}
        &\sup_{\mep \leq \mep_0}\mathbb{P}_{\mep} \left(\exists \n \geq 1:   \log\GLR_\n(\mep_1, \mep_0) \geq g\right) \\
  &\leq e^{-g} \mathbbm{1}\left( D_1 \geq g\right) + \inf_{\eta >1} \left\lceil \log_\eta \left(\frac{g}{D_1}\right)\right\rceil e^{-g / \eta} \mathbbm{1}\left( D_1 < g\right)
 \end{aligned}$}    & $\begin{aligned}  O\left(\frac{\log\log\left(1/ D_1\right) }{D_\mep}\right) \end{aligned}$  \\ 	\addlinespace[2mm] \hline 	\addlinespace[2mm]
\makecell{Test ($\mep_1 = \mep_0$) \\ (Sec.~\ref{subSec::no_separation})} & \makecell{$\begin{aligned}
        &    \sup_{\mep \leq \mep_0}\mathbb{P}_{\mep} \left(\exists\n \geq 1: \bar{X}_\n \geq \mep_0,   \n\D(\bar{X}_\n, \mep_0) \geq g(\n)\right)
   \\
  &\leq \inf_{\eta >1} \sum_{k=1}^{\infty}\exp\left\{-g(\eta^k) / \eta\right\} 
 \end{aligned}$ }   & $\begin{aligned} O\left(\frac{\log\log\left(1/ D_\mep^*\right) }{D_\mep^*}\right) \end{aligned}$  \\ 	\addlinespace[2mm] \hline 	\addlinespace[2mm]
 
 \makecell{Confidence sequence \\ (Sec.~\ref{sec::CS})} & \makecell{$\begin{aligned}
     &\mathbb{P}_{\mep_0} \left(\exists \n \geq 1:   \sup_{z \in (\mep_1, \mep_2)}\log \left(\LR_\n(z,\mep_0)\vee 1\right) \geq g\right)\\
        &\leq  e^{-g}\mathbbm{1}(\nmin > \n_0) + \inf_{\eta >1} \left\lceil \log_\eta \left(\frac{\nmax}{\nmin}\right)\right\rceil e^{-g / \eta}. 
 \end{aligned}$ }   & not applicable\\ 	\addlinespace[2mm] \hline 	\addlinespace[2mm]
\end{tabular}
\end{table*}

There remain several important open problems. First, although we have mainly focused on the case where each sample in a data stream $X_1, X_2, \dots,$ is an independent random variable with the same mean, as we discussed in \cref{remark::thm1_for_process}~and~\ref{rmk::CS_for_varying_mean}, some of the presented analyses can be naturally extended to a real-valued process $(X_i)_{i \in \mathbb{N}}$ adapted to a filtration $(\mathcal{F}_i)_{i \in \{0\}\cup\mathbb{N}}$ where the conditional expectation $\mep_i := \mathbb{E}[X_i \mid \mathcal{F}_{i-1}]$ can vary over time. However, except for additive sub-$\psi$ classes, it is unclear how to extend the expected sample size analysis of SGLR-like tests and the construction of the  confidence sequences to the time-varying mean case. For example, the nonasymptotic upper bound on the expected sample size of SGLR-like test in \cref{thm::upper_bound_const} is only applicable to the i.i.d. random variables. Also, except for the additive sub-$\psi$ case, the confidence sequence in \cref{sec::CS} is not applicable in time-varying settings. Generalizing these analyses to a more flexible nonparametric setting is an important open direction.

Second, the SGLR-like tests derived here and the corresponding confidence sequences are applicable only to the univariate case in which the underlying data stream is a real-valued sequence. It is a natural to inquire whether one can generalize the SGLR-like tests and confidence sequences to multiple sources based multivariate data streams. If we have multiple independent univariate data stream then there is a simple method to combine upper bounds on boundary crossing probabilities for univariate data stream in \cref{thm::boundary_crossing_GLR} into a multivariate one. To be specific, let $K$ be the number of independent univariate data streams. For each $a \in [K]$, let $\{X_{N_a(t)}^a\}_{t \geq 0}$ be a sequence of independent observations from a sub-$\psi_{\mep^a}$ distribution where $N_a(t)$ is the number of sample from the $a$-th distribution at time $t \geq 0$. We assume $N_a(t) \geq 1$ for each $a\in [K]$ and $t \geq 0$. Define $\GLR_t^a(\mep_1^a,  \mep_0^a)$ be the GLR-like statistic based on $N_a(t)$ samples from the $a$-th distribution up to time $t$. Then, from \cref{thm::boundary_crossing_GLR}, we can find a boundary function $g_\alpha^a$ such that the following inequality holds for all $\alpha \in (0,1]$ and for each $a \in [K]$:
\begin{equation} \label{eq::bound_for_each_a}
\begin{aligned}
    &\mathbb{P}_0\left(\exists t \geq 0: \log\GLR_t^a(\mep_1^a,  \mep_0^a) \geq g_\alpha^a\left(N_a(t)\right)\right)
    \leq \alpha,
\end{aligned}
\end{equation}
where $\mathbb{P}_0$ is a null distribution of $K$ independent data streams. Suppose each boundary function can be decomposed as $g_\alpha^a(n) = f^a(n) + h^a(\alpha)$ where $f^a$ is a nonnegative function on $\mathbb{N}$ which does not depends on $\alpha$ and $h^a$ is a nonnegative and nonincreasing function on $(0,1]$ such that  $\lim_{\alpha \to 0} h^a(\alpha)= \infty$. Then, for each $\epsilon > 0$, we have the following upper bound on the boundary-crossing probability for the multiple sources of univariate data streams:
\begin{equation} \label{eq::multiple_source_bound}
\begin{aligned}
    \mathbb{P}_0\left(\exists t \geq 0: \sum_{a=1}^K\log\GLR_t^a(\mep_1^a,  \mep_0^a)\geq \sum_{a=1}^K  f^a\left(N_a(t)\right)  + \epsilon\right)
    \leq \mathbb{P}_0\left(\sum_{a=1}^K h^a(U_a) \geq \epsilon\right), 
\end{aligned}
\end{equation}
where each $U_a$ is an independent $\text{Uniform}[0,1]$ random variable. Since the right hand side only depends on $K$ independent uniformly distributed random variables, we can evaluate the probability of right hand side via a simple Monte Carlo simulation. See Appendix~\ref{appen::proof_of_props} for the derivation of the above inequality. It is an interesting open question whether we can derive similar upper bound for general multiple sources of multivariate data streams.

	\bibliographystyle{ieeetr}
	\bibliography{SGLRT_ref}
	
	
	\newpage
	\appendix
	
	\section{Properties of sub-\texorpdfstring{$\psi_\Mep$}{psi} family distributions} \label{appen:facts_on_sub_psi}
Here, we review some properties of sub-$\psi_\Mep$ family distributions used in the main text. 

\subsection{Three different forms of LR-like statistics}
Recall that the LR-like statistic based on first $\n$ samples which is defined by
\begin{equation} \label{eq::LR_like_form1}
       \LR_{\n}(\mep_1, \mep_0) := \exp\left\{\n\left[ \lambda_1\bar{X}_{\n} - \psi_{\mep_0}(\lambda_1)\right]\right\},
\end{equation}
where $\lambda_1 = \nabla \psi_{\mep_0}^*(\mep_1)$. The LR-like statistic can be rewritten as follows:
\begin{align}
       \LR_{\n}(\mep_1, \mep_0) &:= \exp\left\{\n\left[ \lambda_1\bar{X}_{\n} - \psi_{\mep_0}(\lambda_1)\right]\right\} \nonumber\\
       &=\exp\left\{\n\left[ \lambda_1\left(\bar{X}_{\n} - \mep_1\right) + \lambda_1\mep_1 - \psi_{\mep_0}(\lambda_1)\right]\right\} \nonumber\\
       &=\exp\left\{\n\left[ \lambda_1\left(\bar{X}_{\n} - \mep_1\right) +  \psi_{\mep_0}^*(\mep_1)\right]\right\} \nonumber\\
      &= \exp\left\{\n\left[ \D\left(\mep_1, \mep_0\right) + \nabla_z \D\left(z, \mep_0\right)|_{z= \mep_1} \left(\bar{X}_{\n} - \mep_1\right)\right]\right\} \label{eq::LR_like_form2}\\ 
       &=\exp\left\{\n\left[\psi_{\mep_0}^*(\bar{X}_\n) -\left\{\psi_{\mep_0}^*(\bar{X}_\n) - \psi_{\mep_0}^*(\mep_1) - \nabla \psi_{\mep_0}^*(\mep_1)\left(\bar{X}_{\n} - \mep_1\right)\right\} \right]\right\}  \nonumber\\
       &=\exp\left\{\n\left[\D\left(\bar{X}_{\n}, \mep_0 \right) - \D \left(\bar{X}_{\n}, \mep_1 \right) \right]\right\}\label{eq::LR_like_form3}, 
\end{align}
where the third equality comes from the fact $\lambda_1 \mep_1 = \psi_{\mep_0}(\lambda_1) + \psi_{\mep_0}^*(\mep_1)$ which is a consequence of the Fenchel–Young inequality with equality holding from the choice of $\lambda_1$. The fourth and sixth equalities are based on the relationship $\D(z, \mep_0) = \psi_{\mep_0}^*(z)$ for any $z \in \Mep$ (which in turn comes from the assumption $\nabla \psi_{\mep_0}(0) = \mep_0$, or equivalently $\nabla \psi_{\mep_0}^*(\mep_0) = 0$). In sum, we have three different forms of LR-like statistics in \eqref{eq::LR_like_form1}, \eqref{eq::LR_like_form2} and \eqref{eq::LR_like_form3}. From the equivalence between Bregman and KL divergences for exponential family distributions, we can also check the LR statistic for exponential family distributions also has similar three different forms.

\subsection{Bregman divergences of additive sub-\texorpdfstring{$\psi$}{psi} and EF-like sub-\texorpdfstring{$B$}{B} distributions}

Recall that for an additive sub-$\psi$ family, each $\psi_{\mep}$ function is defined for $\lambda \in \mathbb{R}$ as
\begin{equation}
  \psi_0(\lambda) := \psi(\lambda) \text{ and } \psi_{\mep}(\lambda) = \psi_0(\lambda)+ \lambda \mep.
 \end{equation}
 In this case, we have
 \begin{align*}
     \D(\mep_1, \mep_0) &= \psi_{\mep_0}^*(\mep_1) \\
     & = \sup_{\lambda} \lambda \mep_1 - \psi_{\mep_0}(\lambda) \\
     & = \sup_{\lambda} \lambda (\mep_1-\mep_0) - \psi(\lambda) \\
     & = \psi^*(\mep_1 - \mep_0).
 \end{align*}
For an EF-like sub-$B$ family of distributions, $\psi_{\mep}$ is defined by 
\begin{equation}
    \psi_{\mep}(\lambda)  = B(\lambda + \theta_\mep) - B(\theta_\mep),~\forall\lambda \in \mathbb{R}.
\end{equation}
In this case, for each $z \in \Mep$, $\psi_{\mep_0}^*(z)$ can be written as follows:
\begin{align*}
    \psi_{\mep_0}^*(z) & = \sup_{\lambda} \lambda z - \psi_{\mep_0}(\lambda) \\
    & = \sup_{\lambda} \lambda z - B(\lambda + \theta_{\mep_0}) + B(\theta_{\mep_0}) \\
    & = \sup_{\lambda} (\lambda + \theta_{\mep_0}) z - B(\lambda + \theta_{\mep_0}) + B(\theta_{\mep_0}) -\theta_{\mep_0} z\\
    & = B^*(z) + B(\theta_{\mep_0}) - \theta_{\mep_0} z.
\end{align*}
Therefore, for any $z_1, z_0 \in \Mep$, we have
\begin{align*}
    \D(z_1, z_0) &= \psi_{\mep_0}^*(z_1) - \psi_{\mep_0}^*(z_0) - \nabla \psi_{\mep_0}^* (z_0)(z_1- z_0) \\
    & = B^*(z_1) - B^*(z_0) - \theta_{\mep_0}(z_1 - z_0) - \left(\nabla B^*(z_0) - \theta_{\mep_0}\right)(z_1 - z_0) \\
    & = B^*(z_1) - B^*(z_0)- \nabla B^*(z_0)(z_1 - z_0) \\
    & = D_{B^*}(z_1, z_0),
\end{align*}
which proves the first equality in \eqref{eq::LR-like_LR_equiv}. To prove the second equality, note that the KL divergence between $p_{\mep_1}$ and $p_{\mep_0}$ is defined by
\begin{equation}
        \KL(\mep_1, \mep_0) := - \mathbb{E}_{\mep_0} \log \frac{p_{\mep_1}(X)}{p_{\mep_0}(X)},
\end{equation}
which can be written as follows:
\begin{align*}
    \KL(\mep_1, \mep_0) &= - \mathbb{E}_{\mep_0} \log \frac{p_{\mep_1}(X)}{p_{\mep_0}(X)} \\
    & = B(\theta_{\mep_0}) - B(\theta_{\mep_1}) - \mep_0 \left(\theta_{\mep_0} - \theta_{\mep_1}\right) \\
    & = B(\theta_{\mep_0}) - B(\theta_{\mep_1}) - \nabla B(\theta_{\mep_0}) \left(\theta_{\mep_0} - \theta_{\mep_1}\right) \\
    & = D_{B}(\theta_{\mep_0}, \theta_{\mep_1}) \\
    & = D_{B^*}(\mep_1, \mep_0),
\end{align*}
where $\theta_{\mep_i} := \left(\nabla B\right)^{-1}(\mep_i)$ for each $i = 0, 1$.  Therefore, the KL divergence is equal to the Bregman divergence with respect to $B^*$ which proves the second equality in \eqref{eq::LR-like_LR_equiv}.

\section{Proofs of theorems and related statements}
\label{appen::proofs_of_thms}

\subsection{Proof of Theorem~\ref{thm::boundary_crossing_GLR}}
\label{appen::proof_of_thm_1}

We first review linear-boundary crossing probability and its generalization for discrete mixture supermartingales on which the curved boundary crossing probability in \cref{thm::boundary_crossing_GLR} is based. Theses probabilities for sub-$\psi_{\Mep}$ family distributions were studied comprehensively by \cite{howard2020time}. Consider the following simple null and alternative hypotheses:
\[
H_0: \mep = \mep_0~~\text{vs}~~H_1: \mep = \mep_1,
\]
for some fixed $\mep_1> \mep_0 \in \Mep$. 

Let $\LR_n(\mep_1,\mep_0)$ be the corresponding LR-like statistic based on first $n$ samples defined in \eqref{eq::LR-like_def}. From the definition of sub-$\psi_{\Mep}$ distributions, it can be checked that $\{\LR_n(\mep_1,\mep_0)\}_{\n \in \mathbb{N}}$ is a super-martingale with respect to the natural filtration under the null with $\mathbb{E}_{\mep_0}\left[\LR_{1}(\mep_1,\mep_0)\right] = 1$. From the Ville's maximal
inequality for nonnegative supermartingales~\cite{ville_etude_1939}, we have
\begin{equation} \label{eq::line-crossing_bound}
    \mathbb{P}_{\mep_0}\left(\exists \n \geq 1 : \LR_{\n}(\mep_1,\mep_0) \geq \frac{1}{\alpha}\right) \leq \alpha,
\end{equation}
 for any $\alpha \in (0,1]$. From alternative expressions of LR-like statistics in \eqref{eq::LR-like_diff_form} and \eqref{eq::LR-like_tangent_line_form}, the Ville's maximal inequality above implies that
\begin{align}
   \alpha & \geq \mathbb{P}_{\mep_0}\left(\exists \n\geq 1 : \LR_{\n}(\mep_1,\mep_0) \geq \frac{1}{\alpha}\right)\\
     &= \mathbb{P}_{\mep_0}\left(\exists \n\geq 1 : D_{\psi^*_{\mep_0}}\left(\bar{X}_{\n}, \mep_0\right) - D_{\psi^*_{\mep_0}}\left(\bar{X}_{\n}, \mep_1\right) \geq \frac{1}{n}\log(1/\alpha)\right) \\
     & =\mathbb{P}_{\mep_0}\left(\exists \n\geq 1 :  \D\left(\mep_1, \mep_0\right) + \nabla_z \D\left(z, \mep_0\right)|_{z= \mep_1} \left(\bar{X}_{\n} - \mep_1\right) \geq \frac{1}{n}\log(1/\alpha)\right) \\ 
     &= \mathbb{P}_{\mep_0}\left(\exists \n\geq 1 : \left(\bar{X}_{\n}, \frac{\log(1/\alpha)}{\n}\right) \in H(\mep_1,\mep_0)\right), \label{eq::line_crossing}
\end{align}
where $H(\mep_1,\mep_0)$ is the half space contained in and tangent to $ R(\mep_0)$ at $(\mep_1, D_{\psi^*_{\mep_0}}(\mep_1,\mep_0))$, and $R(\mep_0)$ is defined by
\begin{equation}
    R(\mep_0) := \left\{(z,y) \in \overline{\Mep} \times [0,\infty) : y \leq D_{\psi^*_{\mep_0}}(z,\mep_0), z \geq \mep_0\right\}.
\end{equation}
See \cref{fig::GLR_illustration} for an illustration of $H(\mep_1,\mep_0)$ and $R(\mu_0)$. The event in \eqref{eq::line_crossing} happens if there exists a time $\n$ such that the stochastic process $\left(\bar{X}_{\n}, \frac{\log(1/\alpha)}{\n}\right)_{\n \in \mathbb{N}}$ crosses the linear boundary of $H(\mep_1,\mep_0)$. From this observation, we call it a line-crossing event and refer its probability as a line-crossing probability which is less than $1/\alpha$ by construction.  

\begin{figure}
    \begin{center}
    \includegraphics[scale =  0.75]{./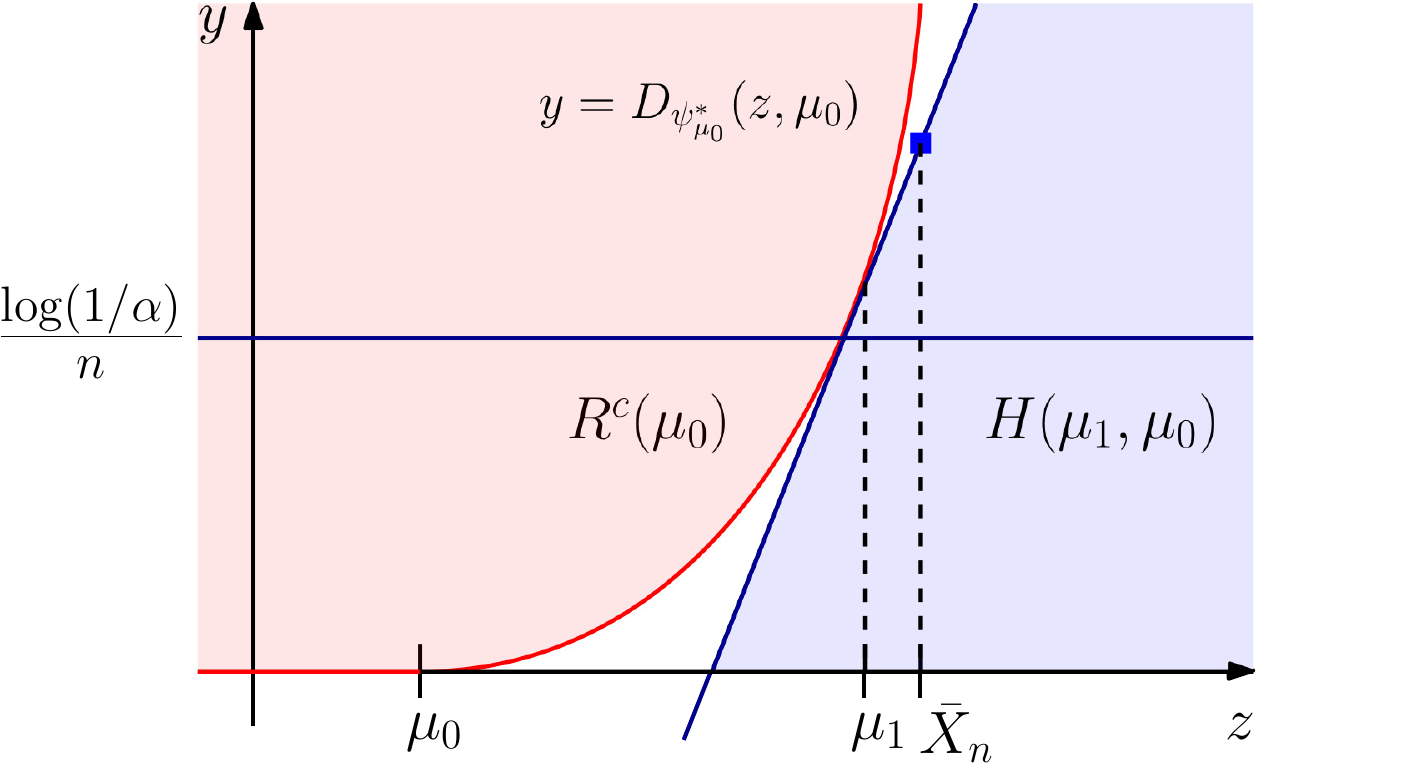}
    \end{center}
    \caption{  Geometric interpretation of the line-crossing event. The red line corresponds to the Bregman divergence from $\mep_0$ and the blue line is tangent to the divergence function at $\mep_1$. The blue square dot on the line corresponds to the LR-like statistics with respect to the observed sample mean $\bar{X}_\n$. In this illustration, the line-crossing event happened since at time $\n$, we have $\left(\bar{X}_\n, \frac{\log(1/\alpha)}{n}\right) \in H(\mep_1,\mep_0)$.} 
    \label{fig::GLR_illustration}
\end{figure}

More generally, let $\{h(j)\}_{j=1}^K$ be a nonnegative sequence such that $\sum_{j=1}^K h^{-1}(j) <\infty$ for some $K \in \mathbb{N} \cup \{\infty\}$. Then, for any given sequence of mean parameters $\{z_k\}_{j=1}^K \in \Mep^K$, $\left\{\sum_{j=1}^K h(j) \LR_n(z_k,\mep_0) \right\}_{\n \geq 1}$ is a supermartingale under $\mathbb{P}_{\mep_0}$ with $\mathbb{E}_{\mep_0}\left[\sum_{j=1}^K h(j) \LR_1(z_k,\mep_0)\right] = \sum_{j=1}^K h^{-1}(j)$. Hence, the Ville's inequality implies the following inequality:
\begin{equation}  \label{eq::discrete_mixture_inequality}
    \mathbb{P}_{\mep_0}\left(\exists \n\geq 1 : \sum_{j=1}^K h(j)^{-1} \LR_n(z_k,\mep_0) \geq \frac{1}{\alpha} \right) \leq \alpha \sum_{j=1}^K h^{-1}(j). 
\end{equation}

Based on these two underlying inequalities, 
we prove the following generalized version of \cref{thm::boundary_crossing_GLR} in which we allow the minimum sample size $\nmin$ of the boundary crossing event to be larger than one, and this can be used to build a SGLR test starting with a pre-collected batch of data. \cref{thm::boundary_crossing_GLR} is directly followed by setting $\nmin = 1$.

\begin{theorem} \label{thm::boundary_crossing_GLR_with_nmin}
  For a given minimum sample size $\nmin \geq 1$, suppose the boundary function $g: [\nmin,\infty) \to [0,\infty)$ satisfies the following two conditions:
    \begin{enumerate}
        \item $g$ is nonnegative and nondecreasing;
        \item the mapping $t \mapsto g(t) / t$ is nonincreasing on $[\nmin,\infty)$ and $\lim_{t\to \infty} g(t) /t = 0$. 
    \end{enumerate}
    Then, for any sub-$\psi_{\Mep}$ family of distributions with  order-preserving  Bregman divergences, the boundary crossing probability under the null can be bounded as follows:
    \begin{align} \label{eq::GLR_bound_thm1_with_nmin}
    &\sup_{\mep \leq \mep_0}\mathbb{P}_{\mep} \left(\exists\n \geq \nmin:   \log\GLR_\n(\mep_1, \mep_0) \geq g(\n)\right) \\
  &\quad \quad \quad \quad \quad \quad \leq \begin{cases} e^{-g(\nminbar)} & \mbox{if } \D(\mep_1,\mep_0) \geq g(\nminbar) / \nminbar \\
\inf_{\eta >1} \sum_{k=1}^{K_\eta}\exp\left\{-g(\nminbar\eta^k) / \eta\right\} & \mbox{otherwise }
  \end{cases} 
  \end{align}
where  $\nminbar:=  \inf\left\{\n \geq \nmin:  \sup_{z\in \Mep, z > \mep_0 }\D(z ,\mep_0) \geq g(\n)/ \n \right\}$ and $K_{\eta} \in \mathbb{N}\cup \{0, \infty\}$ is defined for any $\eta >1$ by
\begin{equation} \label{eq::K_number_with_nmin}
K_{\eta} := K_{\eta}(g;\mep_1,\mep_0):= \inf\left\{k \in \{0\}\cup \mathbb{N}: \D(\mep_1,\mep_0) \geq \frac{g(\nminbar\eta^k)}{\nminbar\eta^{k}}\right\}.
\end{equation}
 \end{theorem}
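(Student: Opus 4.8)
The plan is to combine Ville's maximal inequality for the LR-like supermartingales (already derived in \eqref{eq::line-crossing_bound}) with a geometric ``peeling'' of the time axis into geometrically spaced epochs, exactly as the discussion after the theorem statement suggests.

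First I would reduce the supremum over $\mep \leq \mep_0$ to a statement about the $\mep_0$-supermartingale. For any fixed alternative $z > \mep_0$ the increment of $\LR_\n(z,\mep_0)$ is controlled by $\psi_{\mep_0}$, and since the order-preserving structure forces the maps $\mep \mapsto \psi_{\mep}(\lambda)$ to be nondecreasing for each fixed $\lambda \geq 0$, we get $\log\mathbb{E}_{\mep}[e^{\lambda X}] \leq \psi_{\mep}(\lambda) \leq \psi_{\mep_0}(\lambda)$ whenever $\mep \leq \mep_0$ and $\lambda \geq 0$. Hence $\{\LR_\n(z,\mep_0)\}_\n$ remains a nonnegative supermartingale with initial value at most one under every $\mathbb{P}_\mep$ with $\mep \leq \mep_0$, so each line-crossing inequality below holds uniformly over the null. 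I would also record the elementary fact that crossing is impossible before $\nminbar$: for $\n < \nminbar$ one has $g(\n)/\n > \sup_{z>\mep_0}\D(z,\mep_0) \geq \tfrac1\n\log\GLR_\n(\mep_1,\mep_0)$, so the event is confined to $\n \geq \nminbar$.

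The geometric core is a line-to-curve comparison. Writing $\tfrac1\n\log\GLR_\n(\mep_1,\mep_0) = f(\bar{X}_\n;\mep_1,\mep_0)$ with $f$ the clipped-tangent function of \eqref{eq::GLR-like_fn}, the crossing event at time $\n$ is exactly $\{\bar{X}_\n \geq w_\n\}$, where $w_\n$ is the point with $f(w_\n;\mep_1,\mep_0) = g(\n)/\n$ (well defined and increasing in the boundary height because order-preservation makes $z \mapsto f(z;\mep_1,\mep_0)$ nondecreasing). For $1 \leq k < K_\eta$ I would choose $z_k > \mep_1$ by $\D(z_k,\mep_0) = g(\nminbar\eta^{k})/(\nminbar\eta^{k})$ and use the tangent line $\ell_k(z) := \D(z_k,\mep_0) + L_k (z - z_k)$ with $L_k := \nabla_z\D(z,\mep_0)|_{z=z_k}$, whose statistic is $\LR_\n(z_k,\mep_0) = \exp\{\n\,\ell_k(\bar{X}_\n)\}$. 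On the epoch $\n \in [\nminbar\eta^{k-1},\nminbar\eta^{k})$ the boundary height exceeds $\D(\mep_1,\mep_0)$, so $w_\n > \mep_1$ and $\D(w_\n,\mep_0) = g(\n)/\n > \D(z_k,\mep_0)$, giving $w_\n \geq z_k$; monotonicity of the tangent then yields $\ell_k(\bar{X}_\n) \geq \ell_k(z_k) = \D(z_k,\mep_0)$, whence $\n\,\ell_k(\bar{X}_\n) \geq \nminbar\eta^{k-1}\D(z_k,\mep_0) = g(\nminbar\eta^k)/\eta$. Ville's inequality then bounds the epoch-$k$ crossing probability by $\exp\{-g(\nminbar\eta^k)/\eta\}$. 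The final epoch $k = K_\eta$ I would take to be the whole tail $[\nminbar\eta^{K_\eta-1},\infty)$ and use the tangent at $\mep_1$; here the boundary may fall below $\D(\mep_1,\mep_0)$, so I treat both the $w_\n > \mep_1$ and the $w_\n \leq \mep_1$ regimes, in each verifying via $\ell_{\mep_1}(w_\n) \geq \min\{\D(\mep_1,\mep_0), g(\n)/\n\}$ and the defining inequality $\D(\mep_1,\mep_0) \geq g(\nminbar\eta^{K_\eta})/(\nminbar\eta^{K_\eta})$ that $\n\,\ell_{\mep_1}(\bar{X}_\n) \geq g(\nminbar\eta^{K_\eta})/\eta$. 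A union bound over $k = 1,\dots,K_\eta$ and the infimum over $\eta > 1$ give the second branch. The first branch ($K_\eta = 0$) is the degenerate case using the single tangent at $\mep_1$ over all $\n \geq \nminbar$: there $g(\n)/\n \leq \D(\mep_1,\mep_0)$, so $w_\n \leq \mep_1$, $\ell_{\mep_1}(\bar{X}_\n) \geq g(\n)/\n$, and $\n\,\ell_{\mep_1}(\bar{X}_\n) \geq g(\n) \geq g(\nminbar)$, producing $e^{-g(\nminbar)}$.

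The step I expect to be most delicate is the final, unbounded epoch $k = K_\eta$. There the clipping of the tangent at $\mep_1$ becomes active \emph{within} the epoch, so the crossing threshold $w_\n$ migrates from the Bregman branch ($z > \mep_1$) to the clipped-tangent branch ($z \leq \mep_1$) as $\n$ grows, and one must show that a single tangent line at $\mep_1$ simultaneously dominates the boundary across this transition while still delivering the clean constant $g(\nminbar\eta^{K_\eta})/\eta$. Checking that the defining property of $K_\eta$, together with the monotonicity of $\n \mapsto g(\n)/\n$, is exactly what forces $\n\,\ell_{\mep_1}(\bar{X}_\n) \geq g(\nminbar\eta^{K_\eta})/\eta$ on both sides of the transition is the crux; the earlier epochs and the union bound are then routine.
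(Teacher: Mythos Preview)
Your proposal is correct and follows essentially the same approach as the paper: the paper's Lemma~\ref{lem::relationships} packages exactly your peeling/tangent-line comparison as an ordering of stopping times $N_{\DM}(\eta)\le N_{\ML}(\eta)\le N_{\GLR}$, using the same epochs $[\nminbar\eta^{k-1},\nminbar\eta^{k})$, the same choice of $z_k$ via $\D(z_k,\mep_0)=g(\nminbar\eta^{k})/(\nminbar\eta^{k})$, the same geometric picture of half-spaces $H(z_k,\mep_0)$ tangent to the Bregman region $R(\mep_0)$, and the same separate treatment of the unbounded tail epoch via the tangent at $\mep_1$. The reduction from general $\mep\le\mep_0$ to the $\mep_0$-supermartingale is also handled identically in the paper, via the inequality $\psi_{\mep}(\lambda_1)\le\psi_{\mep_0}(\lambda_1)$ deduced from the order-preserving property.
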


The proof is based on a construction of a sequence of simple stopping times based on linear boundary crossing events. To be specific, for each $\eta > 1$ and $k \in [K_\eta]$, let $h_k = h_k(\eta) := \exp\left\{-g(\nminbar \eta^k) / \eta\right\}$ . Then, set $z_{K_\eta} := \mep_1$ and define $z_k = z_k(\eta)$ as the solution to the  equations
\begin{equation}
    \D(z_k,\mep_0)= \frac{g (\nminbar \eta^{k})}{\nminbar \eta^k},~~z_k > \mep_0,
\end{equation}
for each $k \in \left[K_\eta - 1\right]$. From the definition of $K_\eta$, we can check that $z_1 > z_2 > \cdots > z_{K_\eta} = \mep_1$.

Now, for each $h_k$ and $z_k$, let $N_{\LR}(h_k; z_k, \mep_0)$ be the stopping time based on  the LR-like test for the simple hypothesis testing problem $H_0: \mep =  \mep_0$ versus $H_1 : \mep = z_k$ at level $h_k$, which is given by
\begin{equation}
         N_\LR = N_\LR\left(h_k; z_k, \mep_0\right) := \inf\left\{\n \geq 1: \LR_\n (z_k, \mep_0)\geq h_k^{-1}\right\} \label{eq::lr_st}.
\end{equation}
It can be checked that $N_{\LR}$ induces a valid level $h_k$ test and satisfies
\begin{equation}
 \mathbb{P}_{\mep_0}\left(N_{\LR} <\infty\right) = \mathbb{P}_{\mep_0}\left(\exists\n \geq 1: \LR_\n (z_k, \mep_0)\geq h_k^{-1}\right)\leq h_k.   
\end{equation}

Now, we are ready to present our main technical lemma upon which \cref{thm::boundary_crossing_GLR_with_nmin} is based.
\begin{lemma} \label{lem::relationships}
Under the conditions of \cref{thm::boundary_crossing_GLR}, define the GLR-like curve-crossing time 
    \begin{equation}
        N_\GLR := \inf\left\{\n \geq \nmin: \log \GLR_\n (\mep_1, \mep_0)\geq g(\n)\right\}. \label{eq::glr_st}
    \end{equation}
If $\D(\mep_1, \mep_0) \geq g(\nminbar)/\nminbar$ then we have $N_\LR\left(e^{-g(\nminbar)}; \mep_1, \mep_0\right) \leq N_\GLR$. Otherwise, define the following stopping times corresponding to different boundary crossing events with respect to a fixed $\eta > 1$:
\begin{enumerate} 
    \item Minimum of lines-crossing time:
            \begin{equation}
        \begin{aligned}
        N_{\ML}(\eta) &:= \inf\left\{\n \geq 1: \max_{k\in[K_\eta]} h_k \LR_\n (z_k, \mep_0) \geq 1\right\} \label{eq::max_linear_st} \\
        & = \min_{k\in [K_\eta]} N_{\LR}(h_k; z_k, \mep_0) 
        \end{aligned}
    \end{equation}
    \item Discrete mixture-crossing time:
     \begin{equation} \label{eq::dm_st}
         N_{\DM}(\eta):= \inf\left\{\n \geq 1 : \sum_{k=1}^{K_\eta} h_k \LR_\n (z_k, \mep_0)  \geq 1 \right\}.
     \end{equation}
\end{enumerate}
Then, for each $\eta > 1$ , it holds that
\begin{equation} \label{eq::relation}
    N_{\DM}(\eta) \leq N_{\ML}(\eta)  \leq N_{\GLR}.
\end{equation}
\end{lemma}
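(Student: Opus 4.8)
The plan is to prove the two inequalities separately, working from right to left, since the rightmost inequality $N_{\ML}(\eta) \leq N_{\GLR}$ is the crux and the leftmost $N_{\DM}(\eta) \leq N_{\ML}(\eta)$ is immediate. For the latter, I would note that each summand $h_k \LR_\n(z_k,\mep_0)$ is nonnegative, so $\sum_{k=1}^{K_\eta} h_k \LR_\n(z_k,\mep_0) \geq \max_{k\in[K_\eta]} h_k \LR_\n(z_k,\mep_0)$ for every $\n$. Hence whenever the maximum reaches $1$ (the event defining $N_{\ML}$) the sum has already reached $1$ (the event defining $N_{\DM}$), so the defining event of $N_{\DM}$ contains that of $N_{\ML}$ and $N_{\DM}(\eta) \leq N_{\ML}(\eta)$.

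For $N_{\ML}(\eta) \leq N_{\GLR}$ it suffices to show that at the crossing time $\n := N_{\GLR}$ at least one line is already crossed, i.e. $\n\bigl[\D(z_k,\mep_0) + L_k(\bar{X}_\n - z_k)\bigr] \geq g(\nminbar\eta^k)/\eta$ for some $k \in [K_\eta]$, where $L_k := \nabla_z\D(z,\mep_0)\mid_{z=z_k} > 0$. I would first observe that no crossing is possible before $\nminbar$ because $f(\cdot;\mep_1,\mep_0) \leq \sup_{z>\mep_0}\D(z,\mep_0)$, so $\n \geq \nminbar$, and locate $\n$ in its epoch by choosing $k^* \geq 1$ with $\nminbar\eta^{k^*-1} \leq \n < \nminbar\eta^{k^*}$. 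I then set the index $\tilde k := \min(k^*, K_\eta)$ and split on the sample mean. In the case $\bar{X}_\n > \mep_1$ the crossing reads $\D(\bar{X}_\n,\mep_0) \geq g(\n)/\n$; monotonicity of $t\mapsto g(t)/t$ together with $\n < \nminbar\eta^{k^*}$ gives $g(\n)/\n \geq g(\nminbar\eta^{k^*})/(\nminbar\eta^{k^*})$, which yields $\bar{X}_\n \geq z_{\tilde k}$ (directly when $\tilde k = K_\eta$ since $z_{K_\eta}=\mep_1$, and via the divergence comparison when $\tilde k = k^* < K_\eta$). Since $L_{\tilde k}>0$, the tangent value at $\bar{X}_\n$ is then at least $\D(z_{\tilde k},\mep_0) \geq g(\nminbar\eta^{\tilde k})/(\nminbar\eta^{\tilde k}) \geq g(\nminbar\eta^{\tilde k})/(\eta\n)$, the last step using $\eta\n \geq \nminbar\eta^{\tilde k}$ from the epoch bound. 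In the case $\bar{X}_\n \leq \mep_1$ the bound $g(\n)/\n \leq \D(\mep_1,\mep_0)$ forces $\n > \nminbar\eta^{K_\eta-1}$, hence $k^*\geq K_\eta$ and $\tilde k = K_\eta$ with $z_{K_\eta}=\mep_1$; the positive part in $f$ is active (here $g(\n)>0$), so the tangent value equals $f(\bar{X}_\n) \geq g(\n)/\n$, and the two monotonicity hypotheses give $g(\n) \geq g(\nminbar\eta^{K_\eta-1}) \geq g(\nminbar\eta^{K_\eta})/\eta$, again producing the required crossing. Multiplying through by $\n$ gives $\log\LR_\n(z_{\tilde k},\mep_0) \geq g(\nminbar\eta^{\tilde k})/\eta = \log(1/h_{\tilde k})$, so $N_{\ML}(\eta) \leq \n = N_{\GLR}$.

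The remaining single-line claim, valid when $\D(\mep_1,\mep_0) \geq g(\nminbar)/\nminbar$ (equivalently $K_\eta = 0$), follows from the identical dichotomy applied to the one line at $\mep_1$ with level $e^{-g(\nminbar)}$: if $\bar{X}_\n > \mep_1$ the tangent value is at least $\D(\mep_1,\mep_0) \geq g(\nminbar)/\nminbar \geq g(\nminbar)/\n$, while if $\bar{X}_\n \leq \mep_1$ it equals $f(\bar{X}_\n) \geq g(\n)/\n \geq g(\nminbar)/\n$ by monotonicity of $g$; either way $\LR_\n(\mep_1,\mep_0) \geq e^{g(\nminbar)}$ at $\n = N_{\GLR}$, so $N_{\LR}(e^{-g(\nminbar)};\mep_1,\mep_0) \leq N_{\GLR}$.

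The main obstacle is the case bookkeeping in the $N_{\ML} \leq N_{\GLR}$ step. One must keep $\tilde k$ inside $[K_\eta]$: when $\mep_1 > \mep_0$ the index $K_\eta$ is finite and $\n$ may land in an epoch with $k^* > K_\eta$, which forces the clipped line at $\mep_1$; when $\mep_1 = \mep_0$ we have $K_\eta = \infty$, every crossing satisfies $\bar{X}_\n \geq \mep_0 = \mep_1$, and only the first subcase occurs, so the argument degenerates cleanly. The pathwise comparison $\bar{X}_\n \geq z_{\tilde k}$ rests on $z \mapsto \D(z,\mep_0)$ being strictly increasing on $[\mep_0,\infty)$ (from strict convexity of $\psi^*_{\mep_0}$ and $\nabla\psi^*_{\mep_0}(\mep_0)=0$), which also guarantees each $z_k$ is well-defined and strictly decreasing in $k$; I would flag these monotonicity inputs, together with the two boundary-function hypotheses on $g$ and $t\mapsto g(t)/t$, as the load-bearing facts.
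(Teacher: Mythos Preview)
Your argument is correct and follows essentially the same route as the paper's proof: both locate $n=N_{\GLR}$ in a geometric epoch $[\nminbar\eta^{k-1},\nminbar\eta^{k})$, use the monotonicity of $t\mapsto g(t)/t$ to lower the curved boundary to the epoch endpoint, and then compare against the tangent line at the appropriate $z_k$ (with $z_{K_\eta}=\mep_1$ absorbing the tail). The paper packages this geometrically via the sets $R(\mep_0)$, $S_k$, and $H(z_k,\mep_0)$, introduces an intermediate stopping time $N_{\C}(\eta)$ with the capped boundary $g(n\wedge\nminbar\eta^{K_\eta})$, and treats the cases $\mep_1=\mep_0$ and $\mep_1>\mep_0$ separately; you instead work purely algebraically at the crossing time and split on $\bar{X}_n>\mep_1$ versus $\bar{X}_n\leq\mep_1$, which neatly unifies the two separation regimes and obviates the intermediate $N_{\C}$. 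The load-bearing inequalities (notably $g(\nminbar\eta^{K_\eta-1})\geq g(\nminbar\eta^{K_\eta})/\eta$ from the nonincreasing ratio, and $\eta n\geq \nminbar\eta^{\tilde{k}}$ from the epoch bound) match the paper's exactly.
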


 We provide a constructive proof of \cref{lem::relationships}  based on the geometric interpretation of the relationship between GLR-like curve-crossing and LR-like line-crossing events in the next subsection. As an important remark, unlike the other stopping times considered here, the GLR-like curve-crossing stopping time $N_{\GLR}$ does not depend on the choice of $\eta$. This property makes it possible to use the GLR-like curve-crossing time as a reference to choose an optimal value of $\eta$ to construct sequential tests and confidence sequences based on smaller maximum of lines-crossing and discrete mixture-crossing stopping times, as we do below in Section~\ref{subSec::well-separated}, \ref{subSec::no_separation} and \ref{sec::CS}.

\begin{proof}[Proof of \cref{thm::boundary_crossing_GLR_with_nmin}]

For a fixed $\mep \leq \mep_0$, we first consider the case $\D(\mep_1, \mep_0) \geq g(\nminbar)/\nminbar$. In this case, from \cref{lem::relationships}, we have 
\begin{align*}
    &\mathbb{P}_{\mep} \left(\exists\n \geq \nmin:   \log\GLR_\n(\mep_1,\mep_0) \geq g(\n)\right)\\ 
    &= \mathbb{P}_{\mep} \left(N_\GLR(g; \mep_1, \mep_0) < \infty \right)\\
    &\leq \mathbb{P}_{\mep} \left(N_\LR\left(e^{-g(\nminbar)};\mep_1, \mep_0\right) < \infty \right) \\
    & = \mathbb{P}_{\mep} \left(\exists\n \geq 1:  \log\LR_\n(\mep_1,\mep_0) \geq g(\nminbar)\right).
\end{align*}
Recall that the LR-like statistic based on first $\n$ samples which is defined by
\begin{equation} 
       \LR_{\n}(\mep_1, \mep_0) := \exp\left\{\n\left[ \lambda_1\bar{X}_{\n} - \psi_{\mep_0}(\lambda_1)\right]\right\},
\end{equation}
where $\lambda_1 = \nabla \psi_{\mep_0}^*(\mep_1)$. Since $\psi_{\mep_0}^*(\mep_0) = \nabla \psi_{\mep_0}^*(\mep_0) = 0 $ and $\mep_0 < \mep_1$, we have $\lambda_1 > 0$. Now, from the order-preserving property, it can be checked that $\psi_{\mep} (\lambda_1) \leq \psi_{\mep_0} (\lambda_1)$ which implies $\LR_\n (\mep_1, \mep_0) \leq \LR_\n (\hat{\mep}, \mep)$ where $\hat{\mep}_1 := \nabla \psi_{\mep}(\lambda_1)$. Therefore, we have
\begin{align*}
    &\mathbb{P}_{\mep} \left(\exists\n \geq 1:  \log\LR_\n(\mep_1,\mep_0) \geq g(\nminbar)\right)\\
    &\leq \mathbb{P}_{\mep} \left(\exists\n \geq 1:  \log\LR_\n(\hat{\mep}_1,\mep) \geq g(\nminbar)\right) \\
    & \leq e^{-g(\nminbar)},
\end{align*}
where the last inequality comes from the Ville's inequality in \eqref{eq::line-crossing_bound}. This proves \cref{thm::boundary_crossing_GLR_with_nmin} for the case of $\D(\mep_1, \mep_0) \geq g(\nminbar)/\nminbar$. 

The proof of \cref{thm::boundary_crossing_GLR_with_nmin} for the $\D(\mep_1, \mep_0) < g(\nminbar)/\nminbar$ case follows in a similar way. For any fixed $\mep \leq \mep_0$ and $\eta > 1$, from \cref{lem::relationships}, we have 
\begin{align*}
    &\mathbb{P}_{\mep} \left(\exists\n \geq \nmin:   \log\GLR_\n(\mep_1,\mep_0) \geq g(\n)\right)\\ 
    &= \mathbb{P}_{\mep} \left(N_\GLR(g; \mep_1, \mep_0) < \infty \right)\\
    &\leq \mathbb{P}_{\mep} \left(N_{\DM}(\eta) < \infty \right) \\
    & = \mathbb{P}_{\mep} \left(\exists\n \geq 1:  \sum_{k=1}^{K_\eta} h_k \LR_\n (z_k, \mep_0)  \geq 1\right) \\
    & \leq \mathbb{P}_{\mep} \left(\exists\n \geq 1:  \sum_{k=1}^{K_\eta} h_k \LR_\n (\hat{z}_k, \mep)  \geq 1\right),
\end{align*}
where $\hat{z}_k := \nabla \psi_{\mep}(\lambda_k)$ and $\lambda_k := \nabla \psi_{\mep_0}^*(z_k)$ for each $k \in [K_\eta]$. Therefore, from the consequence of the Ville's inequality in \eqref{eq::discrete_mixture_inequality}, we have
\begin{align*}
     \mathbb{P}_{\mep} \left(\exists\n \geq 1:  \sum_{k=1}^{K_\eta} h_k \LR_\n (\hat{z}_k, \mep)  \geq 1\right)
    \leq \sum_{k=1}^{K_\eta} h_k 
     =  \sum_{k=1}^{K_\eta}\exp\left\{-g(\nminbar\eta^k) / \eta\right\}.
\end{align*}
Since the right hand side of the above inequality does not depend on the choice of $\eta > 1$, it completes the proof of \cref{thm::boundary_crossing_GLR_with_nmin}, as desired.

\end{proof} 

\subsection{Proof of Remark~\ref{remark::thm1_for_process}} \label{appen::proof_of_thm1_remark}

As we did in the proof of \cref{thm::boundary_crossing_GLR}, in this section, we prove the generalized version of Remark~\ref{remark::thm1_for_process} in which we allow the minimum sample size $\nmin$ of the boundary crossing event to be larger than one. The original inequality in \cref{remark::thm1_for_process} can be obtained by setting $\nmin = 1$.

To prove the inequality, we first fix a probability distribution $P \in \mathcal{P}_{\mep_0}$. For any $\mep_0 \leq  z \in \Mep$, set $\lambda(z):= \nabla\psi_{\mep_0}^*(z)$ which is equal to $(\nabla\psi_{\mep_0})^{-1}(z)$. Under the setting in \cref{remark::thm1_for_process}, from the concavity of $\mep \mapsto \psi_\mep (\lambda)$, we have $\sum_{i=1}^\n \psi_{\mep_i} (\lambda) \leq  \n \psi_{\bar{\mep}_\n} (\lambda)$ which implies
\begin{equation}
    \LR_n(\hat{\mep}_{z,\n}, \bar{\mep}_\n):=\exp\left\{\lambda S_\n - \n \psi_{\bar{\mep}_\n} (\lambda) \right\} \leq \exp\left\{\lambda S_\n - \sum_{i=1}^\n \psi_{\mep_i} (\lambda) \right\} := M_\n(\lambda, \{\mep^i\}_{i=1}^\n),
\end{equation}
where $\hat{\mep}_{z,\n}:= \nabla \psi_{\bar{\mep}_\n}(\lambda(z))$. Note that the LR-like statistic in LHS is not necessarily a supermartingale process but it is upper bounded by $(M_\n)_{\n \in \mathbb{N}}$ which is a nonnegative supermartingale process under $P$.  

From the order-preserving property of Bregman divergences, it can be checked that, given the condition $\bar{\mep}_\n \leq \mep_0$, we have $ \LR_n(\hat{\mep}_{z,\n}, \bar{\mep}_\n) \geq  \LR_n(z, \mep_0)$ for any $z \geq \mep_0$. Therefore, if $\D(\mep_1, \mep_0) \geq g(\nminbar)/\nminbar$, as we shown in the proof of \cref{thm::boundary_crossing_GLR}, we have
\begin{align*}
   P \left(\exists\n \geq \nmin:   \log\GLR_\n(\mep_1,\mep_0) \geq g(\n)\right) &\leq  P \left(\exists\n \geq 1: \LR_\n(\mep_1, \mep_0) \geq e^{g(\nminbar)}  \right)\\
    &\leq  P \left(\exists\n \geq 1:\LR_n(\hat{\mep}_{\mep_1,\n}, \bar{\mep}_\n) \geq e^{g(\nminbar)}  \right)\\
     &\leq P\left(\exists \n \geq 1: M_\n\left(\lambda(\mep_1), \{\mep^i\}_{i=1}^\n\right) \geq e^{g(\nminbar)}\right) \\
    & \leq e^{-g(\nminbar)},
\end{align*}
which proves the claimed inequality for the $\D(\mep_1, \mep_0) \geq g(\nminbar)/\nminbar$ case. For the $\D(\mep_1, \mep_0) < g(\nminbar)/\nminbar$ case, by the similar argument, we have 
\begin{align*}
          P \left(\exists\n \geq \nmin:   \log\GLR_\n(\mep_1,\mep_0) \geq g(\n)\right) &\leq P\left(\exists \n \geq 1: \sum_{k=1}^{K_\eta}h_k \LR_\n(z_k, \mep_0) \geq 1\right) \\
        &\leq P\left(\exists \n \geq 1: \sum_{k=1}^{K_\eta}h_k \LR_\n(\hat{\mep}_{z_k,\n}, \bar{\mep}_\n) \geq 1\right) \\
        &\leq P\left(\exists \n \geq 1: \sum_{k=1}^{K_\eta}h_k M_\n\left(\lambda(z_k), \{\mep^i\}_{i=1}^\n\right) \geq 1\right) \\
        & \leq \sum_{k=1}^{K_\eta}\exp\left\{-g(\nminbar\eta^k)/\eta\right\},
\end{align*}
for any fixed $\eta > 1$. Since the right hand side of the last inequality does not depend on the choice $\eta>1$, it proves the claimed inequality, as desired.

\subsection{Proof of Lemma~\ref{lem::relationships}} \label{appen::proof_of_relation_lemma}

Recall that $\nminbar$ is defined by
\begin{equation} 
    \nminbar := \inf\left\{\n \geq \nmin:  \sup_{z\in \Mep, z > \mep_0 }\D(z ,\mep_0) \geq g(\n)/ \n \right\}.
\end{equation}
Therefore, if $\nmin < \nminbar$, we have 
 \begin{equation} \label{eq::nmin_small}
 \sup_{z\in \Mep, z > \mep_0 }\D(z ,\mep_0) < \frac{g(\nmin)}{\nmin}.
 \end{equation}
From the order-preserving property of the Bregman divergence with the fact $\bar{X}_\n \subset \overline{\Mep}$, we have
 \begin{equation}
 \frac{1}{\n}\log\GLR_\n(\mep_1,\mep_0) \leq \sup_{z\in \Mep, z > \mep_0 }\D(z ,\mep_0).
 \end{equation}
The above fact with the inequality~\eqref{eq::nmin_small} implies that
 \begin{equation}
     \left\{\exists \n \in [\nmin, \nminbar): \log\GLR_\n(\mep_1,\mep_0) \geq g(\n)\right\} = \emptyset.
 \end{equation}
Hence, the stopping time $N_\GLR$ can be re-written as follows:
\begin{equation}
N_\GLR =  \inf\left\{\n \geq \nminbar: \log \GLR_\n (\mep_1,\mep_0)\geq g(\n)\right\}.    
\end{equation}

To prove \cref{lem::relationships}, we first consider the case $\D(\mep_1, \mep_0) < g(\nminbar)/\nminbar$. In this case, from the definition of $K_\eta$ for any $\eta > 1$, we have $K_\eta \geq 1$. Now, under the same condition in \cref{lem::relationships}, define a curve-crossing time $N_{\C}$ as follows:
\begin{equation}
    N_C(\eta):= \inf\left\{\n \geq \nminbar: \sup_{z_1 > \mep_1}\log \left(\LR_\n(z_1,\mep_0)\vee 1\right)\geq g\left( n \wedge \nminbar\eta^{K_\eta} \right)\right\}, \label{eq::glr_at_single_null_st}
\end{equation}
for a given $\eta > 1$. Since the function $g$ is nondecreasing, from the definition of the GLR-like statistic, we can check $N_\C(\eta) \leq N_\GLR$ for each $\eta > 1$. Thus, to prove \cref{lem::relationships}, it is enough to show the following inequalities hold:
\begin{equation} \label{eq::lem_target_ineq}
        N_{\DM}(\eta) \leq N_{\ML}(\eta) \leq N_{\C}(\eta),
\end{equation}
for each $\eta > 1$. 

Now, let us first consider the no separation setting $(\mep_1 = \mep_0)$. In this case, we have
\[
\sup_{z_1 > \mep_1}\log \LR_\n(z_1,\mep_0) = \n D_{\psi^*_{\mep_0}}\left(\bar{X}_{\n},\mep_0\right) \mathbbm{1}\left(\bar{X}_{\n} \geq \mep_0\right)~~\text{and}~~K_\eta(\mep_1,\mep_0) = \infty. 
\]
Therefore, the curve-crossing event of the stopping time $N_C(\eta)$ in \eqref{eq::glr_at_single_null_st} can be simplified as
\begin{equation}
    \left\{\exists n \geq \nminbar: D_{\psi^*_{\mep_0}}\left(\bar{X}_{\n},\mep_0\right) \mathbbm{1}\left(\bar{X}_{\n} \geq \mep_0\right)\geq \frac{g(\n)}{\n}\right\}  = \left\{\exists n \geq \nminbar:  \left(\bar{X}_\n, \frac{g(\n)}{\n}\right) \in R(\mep_0) \right\},    
\end{equation}
where the set $R(\mep_0)$ is defined by
\begin{equation}
    R(\mep_0) := \left\{(z,y) \in \overline{\Mep} \times [0,\infty) : y \leq D_{\psi^*_{\mep_0}}(z,\mep_0), z \geq \mep_0\right\}.
\end{equation}

Now, fix a parameter $\eta >1$ and  set $I_k := [\nminbar\eta^{k-1}, \nminbar\eta^k)$ for all $k \geq 1$. Then, since $n \mapsto g(n)/n$ is a nonincreasing mapping, we have 
\begin{align*}
     &\left\{\exists n \geq \nminbar: D_{\psi^*_{\mep_0}}\left(\bar{X}_{\n},\mep_0\right) \mathbbm{1}\left(\bar{X}_{\n} \geq \mep_0\right)\geq \frac{g(\n)}{\n}\right\}\\
     & = \left\{\exists k \geq 1, \exists \n \in I_k: D_{\psi^*_{\mep_0}}\left(\bar{X}_{\n},\mep_0\right) \mathbbm{1}\left(\bar{X}_{\n} \geq \mep_0\right)\geq \frac{g(\n)}{\n}\right) \\
     &\subset\left\{\exists k\geq 1, \exists \n \in I_k: D_{\psi^*_{\mep_0}}\left(\bar{X}_{\n},\mep_0\right) \mathbbm{1}\left(\bar{X}_{\n} \geq \mep_0\right)\geq \frac{g(\nminbar\eta^k)}{\nminbar\eta^k}\right) \\
     &\subset\left\{\exists k\geq 1, \exists \n \in I_k:  \bar{X}_{\n} \geq z_k ,  \frac{g(\nminbar\eta^k)}{\nminbar\eta^k} \geq \frac{g(\nminbar\eta^k)}{\eta \n}\right),
\end{align*}
where $z_k > \mep_0$ is the solution of the following equation: 
\[
D_{\psi^*_{\mep_0}}\left(z_k,\mep_0\right)\mathbbm{1}\left(z_k \geq \mep_0\right) = \frac{g (\nminbar\eta^k)}{\nminbar\eta^k} := d_k.
\]
By setting $S_k:= [z_k,\infty) \times [0,d_k]$ and $h_k := \exp\left\{-g(\nminbar\eta^k) / \eta\right\} \in (0, 1]$, we have
\begin{align*}
    &\left\{\exists n \geq \nminbar: D_{\psi^*_{\mep_0}}\left(\bar{X}_{\n},\mep_0\right)  \mathbbm{1}\left(\bar{X}_{\n} \geq \mep_0\right)\geq \frac{g(\n)}{\n}\right\}\\
    & =\left\{ \exists \n \geq \nminbar: \left(\bar{X}_\n, \frac{g(\n)}{\n}\right) \in R(\mep_0) \right\} \\
    &\subset \left\{\exists k\geq 1, \exists \n \in I_k: \left(\bar{X}_{\n}, \frac{g (\nminbar\eta^k)}{\eta \n} \right)\in S_k \right\} \\
    &\subset \left\{\exists k\geq 1, \exists \n \in I_k: \left(\bar{X}_{\n}, \frac{\log(1/h_k)}{\n} \right)\in H(z_k, \mep_0) \right\},
\end{align*}
where $H(z_k,\mep_0)$ is the half space contained in and tangent to $ R(\mep_0)$ at $(z_k,d_k)$. See \cref{fig::GLR-LR} for illustration of the $S_k, H(z_k, \mep_0)$ and $R(\mep_0)$.

\begin{figure}
    \begin{center}
    \includegraphics[scale =  0.75]{./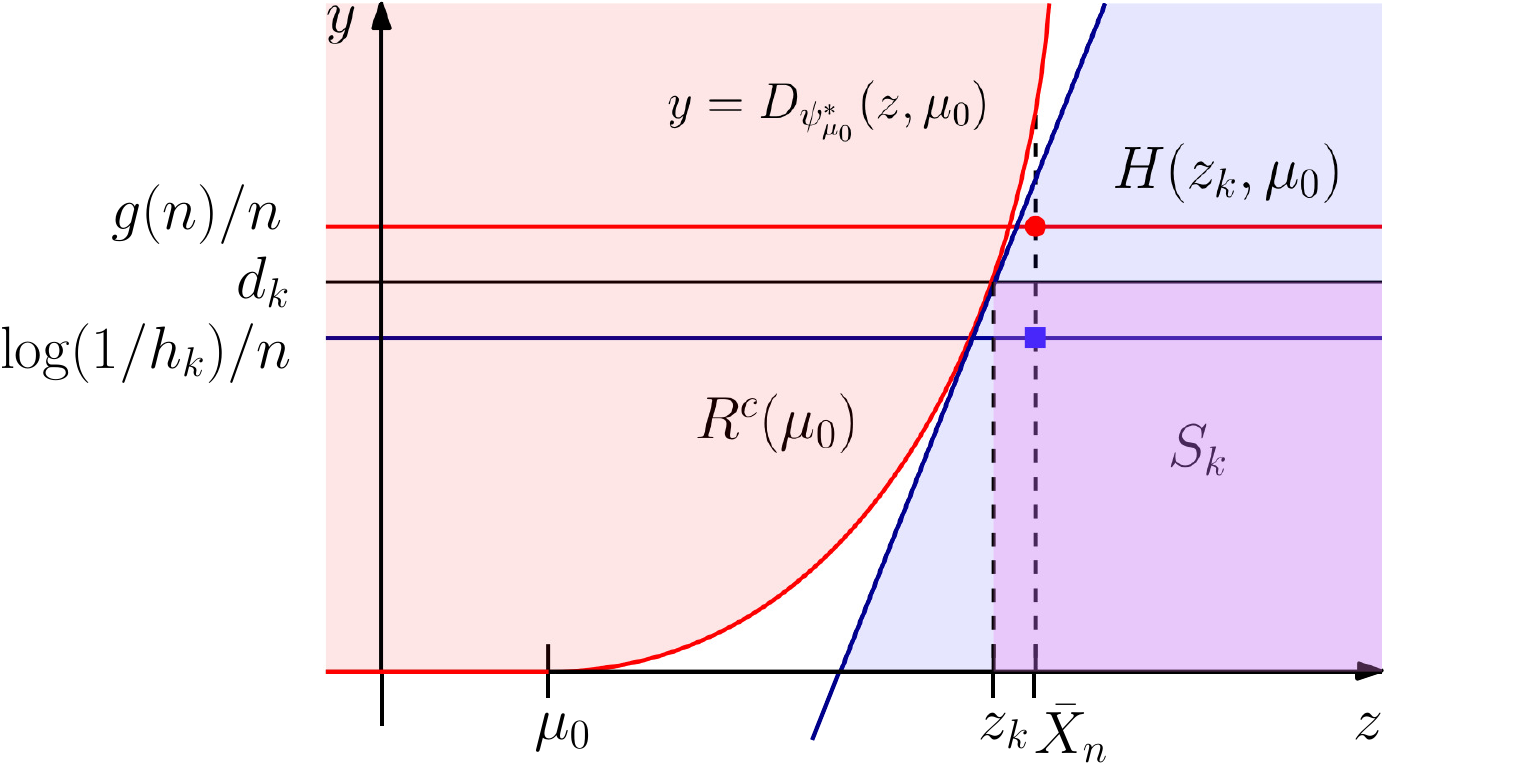}
    \end{center}
    \caption{Illustration of the relationship between the GLR-like curve-crossing and line-crossing times on the time interval $I_k$ when $k < K_\eta$. We can check that, on $I_k$, the GLR-like curve-crossing event $(\bar{X}_\n, g(n) / n) \in R(\mep_0)$ implies the line-crossing event $(\bar{X}_\n, \log(1/h_k) / n) \in H(z_k, \mep_0)$.} 
    \label{fig::GLR-LR}
\end{figure}

As we shown in the proof of \cref{thm::boundary_crossing_GLR}, the half space and LR-like statistic $\LR_\n(z_k, \mep_0)$ has a close relationship as follow
\[
\LR_\n(\mep_1, \mep_0) \geq \frac{1}{\alpha} \Longleftrightarrow \left(\bar{X}_{\n}, \frac{\log(1/\alpha)}{\n} \right)\in H(\mep_1, \mep_0),
\]
for any $\mep_0 \leq \mep_1$ and $\alpha \in (0,1]$. Therefore, we have
\begin{align*}
    &\left\{\exists \n \geq \nminbar: D_{\psi^*_{\mep_0}}\left(\bar{X}_{\n},\mep_0\right) \mathbbm{1}\left(\bar{X}_{\n} \geq \mep_0\right)\geq \frac{g(\n)}{\n}\right\}\\
      & =\left\{ \exists \n \geq \nminbar: \left(\bar{X}_\n, \frac{g(\n)}{\n}\right) \in R(\mep_0) \right\} \\
     &\subset \left\{\exists k\geq 1, \exists \n \in I_k: \left(\bar{X}_{\n}, \frac{\log(1/h_k)}{\n} \right)\in H(z_k, \mep_0) \right\}\\
     & =\left\{\exists k\geq 1, \exists \n \in I_k: \LR_\n (z_k, \mep_0) \geq \frac{1}{h_k} \right\}\\
     & \subset \left\{\exists \n \geq  1: \max_k h_k \LR_\n (z_k, \mep_0)  \geq 1 \right\} \\
     &\subset\left\{\exists \n \geq 1: \sum_{k=1}^\infty h_k \LR_\n (z_k, \mep_0)  \geq 1 \right\}.
\end{align*}
Thus, as desired, the following inequalities hold for all $\eta > 1$:
\begin{equation}
    N_{\DM}(\eta) \leq N_{\ML}(\eta) \leq N_{\C}(\eta).
\end{equation}

Now, let us consider the well-separated alternative case $(\mep_0 < \mep_1)$. As we discussed in \cref{subSec::well-separated}, we can express $\sup_{z_1 > \mep_1}\log \left(\LR_\n(z_1,\mep_0)\vee 1\right)$ as $n f(\bar{X}_\n; \mep_1,\mep_0)$ where the function $f$ is defined by
\begin{equation} \label{eq::GLR-like_fn_in_proof}
f(z ; \mep_1, \mep_0):=
    \begin{cases} 
    \left[\D(\mep_1,\mep_0) + \nabla_z \D(z, \mep_0)\mid_{z = \mep_1}(z -\mep_1)  \right]_+  & \mbox{if } z\leq \mep_1 \\
    \D(z, \mep_0) &\mbox{if } z > \mep_1
    \end{cases}. 
\end{equation}
Then, the curve-crossing event of the stopping time $N_C(\eta)$ in \eqref{eq::glr_at_single_null_st} can be written as
\begin{equation}
    \left\{\exists \n\geq \nminbar: f\left(\bar{X}_{\n};\mep_1,\mep_0\right) \geq \frac{g(\n)}{\n}\right\}  = \left\{\exists \n \geq \nminbar: \left(\bar{X}_\n, \frac{g(\n)}{\n}\right) \in R(\mep_1,\mep_0) \right\},
    \end{equation}
where the set $R(\mep_1,\mep_0)$ is defined by
\begin{equation}
    R(\mep_1,\mep_0) := \left\{(z,y) \in \overline{\Mep} \times [0,\infty) : y \leq f(z; \mep_1,\mep_0), z \geq \mep_0\right\},
\end{equation}
which is equal to $R(\mep_0) \cap H(\mep_1, \mep_0)$.

Similar to the no-separation case above, for each $k \in \mathbb{N}$, let $z_k' > \mep_0$ be the solution of the following equation: 
\[
f(z ; \mep_1, \mep_0) = \frac{g (\nminbar \eta^{k\wedge K_\eta})}{\nminbar\eta^k} := d_k'.
\]
Recall that $K_\eta$ is defined for each $\eta >1$ as
\begin{equation}
K_{\eta} := \inf\left\{k \in \mathbb{N}: \D(\mep_1,\mep_0) \geq \frac{g(\nminbar\eta^k)}{\nminbar\eta^{k}}\right\}.
\end{equation}
Therefore, we have $d_k = d_k' \geq \mep_1$ for each $k = 1, \dots, K_\eta -1$. This observation with the same argument for the well-separation case, we have for each $k = 1,\dots, K_\eta - 1$,
\begin{align*}
 &\left\{\exists \n \in I_k: \sup_{z_1 > \mep_1}\log \left(\LR_\n(z_1,\mep_0)\vee 1\right)\geq g\left(\n \wedge \nminbar\eta^{K_\eta}\right)\right\} \\
&=\left\{ \exists \n \in I_k : \left(\bar{X}_\n, \frac{g(\n)}{\n}\right) \in R(\mep_1,\mep_0) \right\} \\
    &\subset \left\{ \exists \n \in I_k: \left(\bar{X}_{\n}, \frac{g (\nminbar\eta^k)}{\eta \n} \right)\in S_k \right\} \\
    &\subset \left\{ \exists \n \in I_k: \left(\bar{X}_{\n}, \frac{\log(1/h_k)}{\n} \right)\in H(z_k, \mep_0) \right\}.
\end{align*}
For $k = K_{\eta}, K_{\eta}+1, \dots$, we can check that
\begin{align*}
      &\left\{\exists \n  \in I_k: \sup_{z_1 > \mep_1}\log \left(\LR_\n(z_1,\mep_0)\vee 1\right)\geq g\left(\n \wedge \nminbar\eta^{K_\eta}\right)\right\} \\
     &\subset \left\{\exists \n \in I_k: f(\bar{X}_\n; \mep_1,\mep_0)\geq \frac{g(\nminbar\eta^{K_{\eta}})}{\nminbar\eta^k}\right\} \\
     &\subset\left\{ \exists \n \in I_k:  \bar{X}_{\n} \geq z_k' ,  \frac{g(\eta^{K_\eta})}{\eta^k} \geq \frac{g(\nminbar\eta^{K_\eta})}{\eta \n}\right\}.
\end{align*}
From the definition of $K_\eta$ and the condition $k \geq K_\eta$, we have
$z_k' \in (\mep_0, \mep_1]$ which implies that $(z_k', d_k')$ lies on the boundary of   $H(\mep_1,\mep_0)$. Therefore, we have $S_k':= [z_k',\infty) \times [0,d_k'] \subset H(\mep_1,\mep_0)$, which implies
\begin{align*}
 &\left\{\exists \n \in I_k:  \sup_{z_1 > \mep_1}\log \left(\LR_\n(z_1,\mep_0)\vee 1\right) \geq g\left(\n \wedge \nminbar\eta^{K_\eta}\right)\right\} \\
&=\left\{ \exists \n \in I_k : \left(\bar{X}_\n, \frac{g\left(\n \wedge \nminbar\eta^{K_\eta}\right)}{\n}\right) \in R(\mep_1,\mep_0) \right\} \\
    &\subset \left\{ \exists \n \in I_k: \left(\bar{X}_{\n}, \frac{g (\nminbar\eta^{K_\eta})}{\eta \n} \right)\in S_k' \right\} \\
    &\subset \left\{ \exists \n \in I_k: \left(\bar{X}_{\n}, \frac{\log(1/h_{K_\eta})}{ \n} \right)\in H(\mep_1, \mep_0) \right\}.
\end{align*}
See \cref{fig::GLR-LR2} for illustration of the $S_k', H(\mep_1, \mep_0)$ and $R(\mep_1, \mep_0)$.
\begin{figure}
    \begin{center}
    \includegraphics[scale =  0.75]{./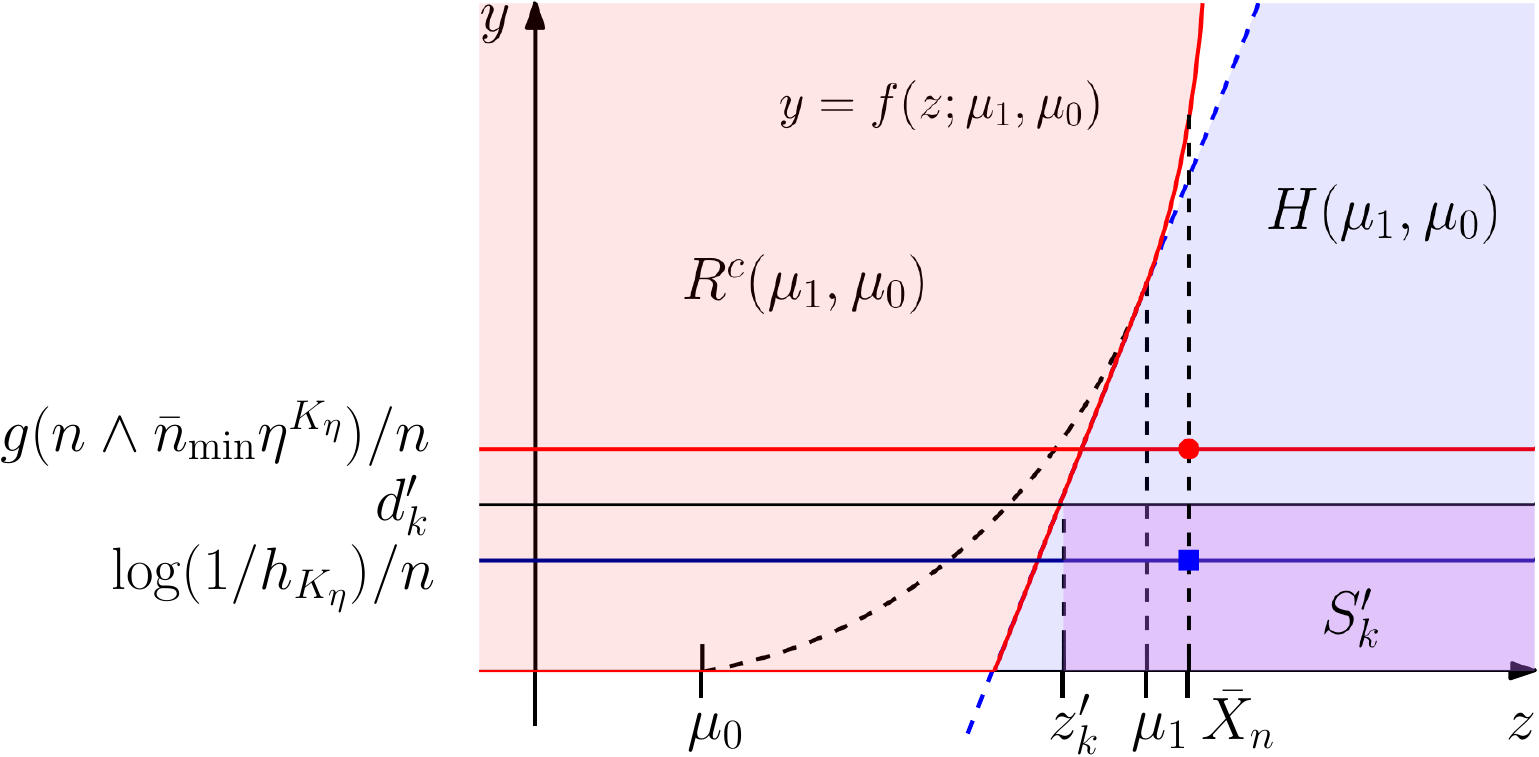}
    \end{center}
    \caption{Illustration of the relationship between the GLR-like curve-crossing and line-crossing times on the time interval $I_k$ when $k \geq K_\eta$. We can check that the GLR-like curve-crossing event $(\bar{X}_\n, g(n\wedge\nminbar\eta^{K_\eta} ) / n) \in R(\mep_1,\mep_0)$ implies the line-crossing event $(\bar{X}_\n, \log(1/h_{K_\eta}) / n) \in H(\mep_1, \mep_0)$ on each $I_k$ for all $k \geq K_\eta$.} 
    \label{fig::GLR-LR2}
\end{figure}
It is important to note that the last event in the previous equation depends on $k$ only through the condition $\exists n \in I_k$. Hence, we have that 
\begin{align*}
    &\left\{ \exists k \in \{K_\eta, K_\eta + 1 ,\dots\},\exists \n \in I_k : \left(\bar{X}_\n, \frac{g\left(\n \wedge \nminbar\eta^{K_\eta}\right)}{\n}\right) \in R(\mep_1,\mep_0) \right\} \\
    &\subset \left\{  \exists \n \in \bigcup_{j=K_\eta}^\infty I_k: \left(\bar{X}_{\n}, \frac{\log(1/h_{K_\eta})}{ \n} \right)\in H(\mep_1, \mep_0) \right\} \\
    & = \left\{  \exists \n \geq \eta^{K_\eta - 1} : \left(\bar{X}_{\n}, \frac{\log(1/h_{K_\eta})}{ \n} \right)\in H(\mep_1, \mep_0) \right\}.
\end{align*}
In sum, we have the following relationships between events:
\begin{align*}
      &\left\{\exists n \geq \nminbar: \sup_{z_1 > \mep_1}\log \left(\LR_\n(z_1,\mep_0)\vee 1\right)\geq g\left(\n \wedge \eta^{K_\eta}\right)\right\}\\
      & =\left\{ \exists \n \geq \nminbar : \left(\bar{X}_\n, \frac{g\left(\nminbar \left[\frac{\n}{\nminbar} \wedge \eta^{K_\eta}\right]\right)}{\n}\right) \in R(\mep_1,\mep_0) \right\} \\
     &\subset \left\{\exists k \in \{1,\dots, K_\eta -1\}, \exists \n \in I_k: \left(\bar{X}_{\n}, \frac{\log(1/h_k)}{\n} \right)\in H(z_k, \mep_0) \right\}\\
     &~~~~~~~~\cup \left\{ \exists \n \geq \nminbar\eta^{K_\eta-1}: \left(\bar{X}_{\n}, \frac{\log(1/h_{K_\eta})}{\n} \right)\in H(\mep_1, \mep_0) \right\}\\
     & =\left\{\exists k \in \{1,\dots, K_\eta -1\},  \exists \n \in I_k: \LR_\n (z_k, \mep_0) \geq \frac{1}{h_k} \right\}\\
     &~~~~~~~~\cup\left\{\exists \n \geq \nminbar\eta^{K_\eta -1}: \LR_\n (\mep_1, \mep_0) \geq \frac{1}{h_{K_\eta}} \right\} \\
     & \subset \left\{\exists \n \geq 1: \max_{j\in[K_\eta]} h_k \LR_\n (z_k, \mep_0) \geq 1 \right\}~~(\text{with}~~z_{K_\eta} :=\mep_1), \\
     &\subset\left\{\exists \n \geq 1 : \sum_{j=1}^{K_\eta} h_k \LR_\n (z_k, \mep_0)  \geq 1\right\},
\end{align*}
which implies $N_{\DM}(\eta) \leq N_{\ML}(\eta) \leq N_{\C}(\eta)$
for each $\eta > 1$, as desired. 

Finally, for the case of $\D(\mep_1, \mep_0) \geq g(\nminbar)/\nminbar$, by the definition of $K_\eta$ in \eqref{eq::K_number_with_nmin}, we have $K_\eta = 0$. From the fact $\frac{g(n)}{n} \leq \frac{g(\nminbar)}{\nminbar}$ for all $n \geq \nminbar$ and the following relationship between $R(\mep_1,\mep_0)$ and $H(\mep_1, \mep_0)$:
\[
R(\mep_1,\mep_0) \cap \overline{\Mep} \times \left[0, \D(\mep_1, \mep_0)\right] = H(\mep_1, \mep_0) \cap \overline{\Mep} \times \left[0, \D(\mep_1, \mep_0)\right],
\]
we know that $\left(\bar{X}_\n, \frac{g(n)}{n}\right) \in R(\mep_1,\mep_0)$ is equivalent to $\left(\bar{X}_\n, \frac{g(n)}{n}\right) \in H(\mep_1,\mep_0)$. Therefore, we have the following relationships between events:
\begin{align*}
    &\left\{\exists \n \geq \nminbar: \log \GLR_\n (\mep_1,\mep_0)\geq g(\n) \right\} \\
    & \subset \left\{\exists \n \geq \nminbar:  \sup_{z_1 > \mep_1}\log \left(\LR_\n(z_1,\mep_0)\vee 1\right)\geq g(\n) \right\} \\
    & = \left\{\exists \n \geq \nminbar: \left(\bar{X}_\n, \frac{g(n)}{\n}\right) \in R(\mep_1,\mep_0)\right\}\\
    & = \left\{\exists \n \geq \nminbar: \left(\bar{X}_\n, \frac{g(n)}{\n}\right) \in H(\mep_1, \mep_0)\right\}\\
    & = \left\{\exists \n \geq \nminbar: \LR_\n (\mep_1, \mep_0) \geq e^{g(n)}\right\}\\
    & \subset \left\{\exists \n \geq 1: \LR_\n (\mep_1, \mep_0) \geq e^{g(\nminbar)}\right\}~~\text{(since $g$ is non-decreasing)},
\end{align*}
which implies  $N_\LR (e^{-g(\nminbar)}; \mep_1, \mep_0) \leq N_\GLR$ as desired.

\subsection{Proof of Theorem~\ref{thm::upper_bound_const}}
\label{append::proof_of_upper_bound_const}
In this proof, we use the term $g_\alpha$ instead of $g_\alpha(\mep_1,\mep_0)$ for simplicity.  The statement of the type-1 error control is a direct consequence of \cref{thm::boundary_crossing_GLR} with the definition of $g_\alpha$. 

The proof of the upper bound on the expected sample size mainly follows the proof of  \cite[Theorem 1]{lorden1973open} in which a  similar bound  for exponential family distributions was presented. Here, we generalize the previous bound in \cite{lorden1973open} to the sub-$\psi_\Mep$ family of distributions. In this case, the stopping time of the SGLR-like test is given by
\begin{equation}
    N_{\GLR}(g_\alpha) = \inf\left\{\n \geq 1 : \sup_{z > \mep_1} \log \LR_\n(z, \mep_0) \geq g_\alpha \right\}
\end{equation}
For any $g > 0$, let $N_\LR(g) := \inf\left\{\n \geq 1: \log \LR_\n (\mep, \mep_0) \geq g \right\}$. Since $N_{\GLR}(g_\alpha) \leq N_\LR(g_\alpha)$, to prove \cref{thm::upper_bound_const}, it is enough to show the following inequality holds:
\begin{equation}
   \mathbb{E}_{\mep}\left[N_\LR(g_\alpha)\right] \leq \frac{g_\alpha}{\D(\mep, \mep_0)} + \left[ \frac{\sigma_{\mep}\nabla\psi_{\mep_0}^*(\mep)}{\D(\mep, \mep_0)}\right]^2 + 1.
\end{equation}
Recall that the log LR-like statistic based on first $n$ samples can be written as
\begin{equation}
    \log\LR_\n(\mep,\mep_0) = \sum_{i=1}^\n \left[\D(\mep,\mep_0) + \lambda \left(X_i -\mep\right)\right\},
\end{equation}
where $\lambda := \nabla \psi_{\mep_0}^*(\mep) = \nabla \D(z, \mep_0)\mid_{z = \mep}$. From the renewal theory, it is well-known (e.g., see Section 2.6 in  \cite{durrett2019probability})  that $\mathbb{E}_\mep N_\LR(g) < \infty$ for each $g >0$. Therefore, by Wald's equation, 
\begin{equation}
    \D(\mep, \mep_0) \mathbb{E}_{\mep}\left[N_\LR(g_\alpha)\right] 
    = \mathbb{E}_{\mep} \left[\log\LR_{N_\LR(g_\alpha)}(\mep,\mep_0)\right].
\end{equation}
Now, for each $g > 0$, let $R_g := \log\LR_{N(g)}(\mep, \mep_0) -g$. Then, we have
\begin{align}
    \D(\mep, \mep_0) \mathbb{E}_{\mep}\left[N_\LR(g_\alpha)\right]
    &= \mathbb{E}_{\mep} \left[\log\LR_{N_\LR(g_\alpha)}(\mep,\mep_0)\right]\nonumber\\
    & \leq g_\alpha + \sup_{g > 0}\mathbb{E}_\mep \left[ R_g\right]\nonumber \\
    & \leq g_\alpha + \frac{\lambda^2 \sigma_\mep^2}{\D(\mep,\mep_0)} + \D(\mep,\mep_0), \label{eq::sample_complex_inner}
\end{align}
where the first inequality comes from the definition of $N_\LR(g_\alpha)$ and the second inequality is based on the Lorden's inequality \cite{lorden1970excess}. For the completeness, we state the inequality below: 
\begin{fact}[Lorden's inequality \cite{lorden1970excess}]
Suppose $X_1, X_2, \dots$ are i.i.d. samples with $\mathbb{E}X_1 = \mep > 0$ and $\mathbb{E}X_1^2 = \sigma^2 <\infty$. For each $g >0$, set $N(g) := \inf\left\{\n: S_n := \sum_{i=1}^\n X_i \geq g \right\}$ and $R_g := S_{N(g)} - g$. Then, the following inequality holds:
\begin{equation}
\sup_{g > 0}\mathbb{E} \left[ R_g\right] \leq \frac{\mathbb{E}X_1^2}{\mep} = \mep + \frac{\sigma^2}{\mep}.
\end{equation}
\end{fact}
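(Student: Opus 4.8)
The plan is to prove Lorden's inequality by a renewal-theoretic first-passage analysis. Writing $S_n = \sum_{i=1}^n X_i$ and $N = N(g) = \inf\{n : S_n \ge g\}$ (finite a.s. since $\mu > 0$), the first and simplest observation is that the overshoot never exceeds the increment that carries the walk across the boundary: since $S_{N-1} < g \le S_N$ we have $R_g = S_N - g = X_N - (g - S_{N-1}) \le X_N$, because the undershoot $g - S_{N-1}$ is nonnegative. Hence it suffices to bound $\mathbb{E}[X_N]$, the expected value of the straddling increment, uniformly in $g$. This domination is exactly the step that produces Lorden's constant rather than the sharper limiting constant, and it is tight: for deterministic increments $X \equiv c$ one has $X_N \equiv c$ and $\sup_g \mathbb{E}[R_g] = c = \mathbb{E}[X_1^2]/\mu$.

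To bound $\mathbb{E}[X_N]$ I would pass to nonnegative increments. For a general walk with positive drift, first passage over $g>0$ occurs at an ascending ladder epoch, and the overshoot of $(S_n)$ over $g$ equals the overshoot of the associated ladder-height renewal process, whose increments are i.i.d.\ and nonnegative; recovering the bound in terms of the original moments $\mathbb{E}[X_1^2]$ and $\mu$ then proceeds through Wald-type ladder identities. For nonnegative increments, decomposing on the index $n$ at which the crossing occurs and using independence of $X_n$ from $S_{n-1}$ gives the renewal-measure representation
\begin{equation}
\mathbb{E}[X_N] = \int_{[0,g)} \phi(g - s)\, dU(s), \qquad \phi(t) := \mathbb{E}\!\left[X_1\, \mathbbm{1}(X_1 \ge t)\right],
\end{equation}
where $U = \sum_{n \ge 0} F^{*n}$ is the renewal measure. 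Here $\phi$ is nonnegative and nonincreasing with $\int_0^\infty \phi(t)\,dt = \mathbb{E}[X_1^2]$, and the key renewal theorem identifies the limit $\lim_{g\to\infty} \mathbb{E}[X_N] = \mu^{-1}\int_0^\infty \phi = \mathbb{E}[X_1^2]/\mu$, precisely the target constant, arising as the size-biased mean of the straddling increment.

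The hard part is upgrading this limit to a bound that is uniform in $g$. A generic key-renewal estimate does not suffice: bounding $\int_{[0,g)}\phi(g-s)\,dU(s)$ by $\mu^{-1}\int_0^\infty\phi$ for an arbitrary nonincreasing $\phi$ is in fact false, since it would force $U(I) \le |I|/\mu$ for every interval $I$, which already fails for lattice walks. The argument must therefore exploit the specific coupling between the increment law $F$ and its own renewal measure $U$, and this is the genuine technical core of Lorden's theorem. I would either follow his original comparison argument or set up the renewal equation $r = h + F * r$ for $r(g) = \mathbb{E}[R_g]$, with $h(t) = \mathbb{E}[(X_1 - t)^+]$, and bound its solution directly, checking that the worst case over $g$ is controlled by $\mathbb{E}[X_1^2]/\mu$ while the typical large-$g$ value is only half as large, namely $\mathbb{E}[X_1^2]/(2\mu)$. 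Once $\sup_g\mathbb{E}[X_N] \le \mathbb{E}[X_1^2]/\mu$ is established, the inequality $R_g \le X_N$ yields $\sup_g \mathbb{E}[R_g] \le \mathbb{E}[X_1^2]/\mu$, and rewriting $\mathbb{E}[X_1^2] = \mu^2 + \sigma^2$ with $\sigma^2 = \mathrm{Var}(X_1)$ gives the stated form $\mu + \sigma^2/\mu$.
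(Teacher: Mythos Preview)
The paper does not prove this statement: it is quoted verbatim as a classical fact with a citation to Lorden's 1970 paper and invoked as a black box in the proof of Theorem~\ref{thm::upper_bound_const}. There is therefore no in-paper argument to compare against.

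Your sketch follows the standard renewal-theoretic route and correctly isolates the crux. The domination $R_g \le X_N$, the renewal-measure representation of $\mathbb{E}[X_N]$ for nonnegative increments, and the identification of the key-renewal limit as $\mathbb{E}[X_1^2]/\mu$ are all sound. You are also right that upgrading this limit to a bound uniform in $g$ is the genuine technical content, and that a generic monotone-$\phi$ estimate fails (it would force $U(I)\le |I|/\mu$ for all intervals, which is false); at that point you defer to ``Lorden's original comparison argument,'' which is effectively the same move the paper makes by citing the result outright. One loose end worth flagging: the ladder-height reduction you invoke to pass from general increments to nonnegative ones is only sketched, and recovering the constant $\mathbb{E}[X_1^2]/\mu$ in terms of the \emph{original} moments (rather than ladder-height moments $\mathbb{E}[H^2]/\mathbb{E}[H]$) is not automatic from Wald-type identities alone. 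Since the paper's own application is to increments $Y_i = \D(\mu,\mu_0) + \lambda(X_i-\mu)$ that need not be nonnegative, this extension is being implicitly relied upon there as well.
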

By multiplying $1  / \D(\mep,\mep_0)$ on both sides of the inequality~\eqref{eq::sample_complex_inner}, we have the claimed upper bound on $ \mathbb{E}_{\mep}\left[N_{\LR}(g_\alpha)\right]$ as desired. 

\subsection{Proof of Theorem~\ref{thm::T_high-sub-psi}}
\label{append::proof_of_high_prob}

We first prove the following lemma.
\begin{lemma} \label{lem::upper_bound_cal}
 For fixed $t >1, \eta > 1, m \geq 1, d >0$ and $w \in (0,1)$, suppose the following holds:
 \begin{equation} \label{eq::T_ave_condition}
     d \leq \frac{\eta \log(1/w)}{t} + \frac{2 \eta \log\left(m + \log_\eta t \right)}{t}.
 \end{equation}
 Then, we have
 \begin{equation} \label{eq::T_ave_result}
     t \leq \frac{2\eta}{d} \log\left(2m+2 + 2 \log_\eta \frac{2}{\log\eta} + 2 \left[\log_\eta (1/d)\right]_+\right) + \frac{2\eta}{d}\log(1/w),
 \end{equation}
 where $[x]_+ := \max\{x, 0\}$.
\end{lemma}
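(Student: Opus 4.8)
The plan is to read the hypothesis \eqref{eq::T_ave_condition} as a single, \emph{self-referential} upper bound on $t$---its right-hand side contains $\log(m+\log_\eta t)$, i.e.\ a $\log\log t$ term---and to invert it into the explicit bound \eqref{eq::T_ave_result}. Write $a := \log(1/w) > 0$, $L := \log\eta > 0$, and introduce $v := \frac{td}{2\eta}$ and $u := m + \log_\eta t$; note $u > 1$ since $t>1$ and $\eta > 1$. With $B := 2m + 2 + 2\log_\eta\frac{2}{L} + 2[\log_\eta(1/d)]_+$, the target inequality \eqref{eq::T_ave_result} is exactly $v \le a + \log B$ after multiplying through by $\frac{d}{2\eta}$. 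Dividing the hypothesis \eqref{eq::T_ave_condition} by $2$ gives the first of two inequalities I would use,
\begin{equation}
v \le \tfrac{a}{2} + \log u.
\end{equation}

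The key step is a matching \emph{lower} bound on $v$ in terms of $u$, the linearization
\begin{equation}
u = m + \log_\eta t \le \tfrac{v}{2} + \tfrac{B}{2}, \qquad\text{equivalently}\qquad v \ge 2u - B.
\end{equation}
To prove it I would substitute $t = \frac{2\eta v}{d}$ and expand $\log_\eta t = 1 + \log_\eta 2 + \log_\eta v - \log_\eta d$, then control the only genuinely $v$-dependent term by the tangent-line bound $\log_\eta v = \frac{\log v}{L} \le \frac{v}{2} + \log_\eta\frac{2}{L} - \frac1L$ (the tangent to $\log$ at the point $v = 2/L$). Bounding $-\log_\eta d \le [\log_\eta(1/d)]_+$ and absorbing the leftover constant via $\log_\eta 2 - \frac1L = \frac{\log 2 - 1}{L} < 0$---here the numerical fact $\log 2 < 1$ is precisely what makes the constants collapse into exactly $B/2$---yields the displayed inequality.

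Finally I would pinch $v$ between the two bounds. From $2u - B \le v \le \frac a2 + \log u$ I obtain $2u - \log u \le B + \frac a2$, and since $u - \log u \ge 1$ for every $u > 0$ (minimum at $u=1$) this forces $u \le B + \frac a2 - 1$. Substituting back into $v \le \frac a2 + \log u$ and using the convexity estimate $e^{a/2} \ge 1 + \frac a2$ together with $B \ge 1$ (in fact $B > 3$, since $\log_\eta\frac{2}{L} \ge -\frac{1}{2e}$) gives $B + \frac a2 - 1 \le B + \frac{Ba}{2} \le B\,e^{a/2}$, hence $\log u \le \log B + \frac a2$ and therefore $v \le a + \log B$; multiplying by $\frac{2\eta}{d}$ recovers \eqref{eq::T_ave_result}. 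The one real obstacle is the self-referential, transcendental character of \eqref{eq::T_ave_condition}; the device that defeats it is the two-sided pinching of $v$ (one bound straight from the hypothesis, the other from the tangent-line linearization), which eliminates the $\log\log t$ term and reduces everything to the elementary facts $u - \log u \ge 1$ and $e^{x} \ge 1 + x$.
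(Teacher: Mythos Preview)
Your proof is correct and takes a genuinely different route from the paper's. The paper proceeds by a two-stage bootstrap: it first rewrites the hypothesis as $t \le \frac{2\eta}{d}\log\bigl(\frac{\log(\eta^m t)}{\sqrt{w}\log\eta}\bigr)$, then applies the crude bounds $\log x \le x$ and $\log x \le \sqrt{x}$ in succession to extract an explicit bound $\sqrt{\eta^m t} \le \frac{2\eta^{m+1}}{d\sqrt{w}\log\eta}$, substitutes this back into the rewritten hypothesis, and closes with the ad~hoc inequality $a + \log x \le a\sqrt{x}$ (valid for $x\ge 1$, $a\ge 2$). Your approach instead introduces the natural variables $v = td/(2\eta)$ and $u = m+\log_\eta t$, obtains $v \le \tfrac{a}{2}+\log u$ directly from the hypothesis, derives the complementary linear bound $u \le \tfrac{v}{2}+\tfrac{B}{2}$ from a tangent-line inequality for $\log$, and then pinches $v$ between the two to eliminate the self-reference via $u-\log u \ge 1$ and $e^x \ge 1+x$. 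Your argument is more systematic and transparent---each elementary inequality is sharp somewhere, and the constants assemble into exactly $B$ without slack---whereas the paper's square-root bounds are looser but perhaps quicker to write down. Both arrive at the same constant, so neither buys a tighter conclusion; yours is the cleaner derivation.
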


\begin{proof}[Proof of \cref{lem::upper_bound_cal}]
This proof is based on arguments in \cite{jamieson_lil_2014}. From \cref{eq::T_ave_condition}, we have
\begin{align} \label{eq::simple_form1}
    t \leq \frac{2\eta}{d}\log\left(\frac{\log \eta^m t}{\sqrt{w} \log \eta}\right).
\end{align}
Since $\log x \leq \min\left\{\sqrt{x}, x\right\},~~\forall x >0$, we can further simplify the previous inequality as
\begin{align} \label{eq::bound_on_eta_t}
    d \leq \frac{2 \eta}{t} \frac{\sqrt{\eta^m t}}{\sqrt {w} \log \eta}
    \Leftrightarrow \sqrt{\eta^m t} \leq \frac{2\eta^{m +1}}{d\sqrt{w}\log \eta} .
\end{align}
By plugging in \cref{eq::bound_on_eta_t} to the RHS of \cref{eq::simple_form1}, we have  
\begin{align}
    t &\leq \frac{2\eta}{d}\log\left(\frac{2\log \frac{2\eta^{m/2 +3/2}}{d\sqrt{w}\log\eta}}{\sqrt{w}\log\eta}\right) \nonumber\\
    & = \frac{2\eta}{d} \log \left(2\log_\eta \frac{2\eta^{m+1}}{\log \eta} + 2\log_\eta (1/d) + \log_\eta(1/w)\right)  + \frac{\eta}{d}\log(1/w)  \nonumber\\
    &\leq \frac{2\eta}{d} \log \left(2m+2 + 2\log_\eta \frac{2}{\log \eta} + 2\left[\log_\eta (1/d)\right]_+ + \log_\eta(1/w)\right)  + \frac{\eta}{d}\log(1/w). \label{eq::t_bound_further}
\end{align}
Note that $a+ \log x \leq a \sqrt{x},~~\forall x \geq 1$ if $a \geq 2$. Hence, \cref{eq::t_bound_further} can be further upper bounded by
\begin{align*}
  t \leq \frac{2\eta}{d} \log\left(2m+2 + 2 \log_\eta \frac{2}{\log\eta} + 2 \left[\log_\eta (1/d)\right]_+ \right) + \frac{2\eta}{d}\log(1/w),   
\end{align*}
as claimed. 
\end{proof}

Now, we are ready to prove \cref{thm::T_high-sub-psi} as follows:
\begin{proof}[Proof of \cref{thm::T_high-sub-psi}]
Recall that 
\begin{equation}
    D_{\psi^*}^{*}(\mep, \mep_0) := D_{\psi^*_{\mep_0}}(x^*, \mep_0) = D_{\psi^*_{\mep}}(x^*, \mep),
\end{equation}
where $x^* = x^*(\mep, \mep_0)$ is the unique solution of the following equation:
\begin{equation}
    D_{\psi^*_{\mep_0}}(x, \mep_0) = D_{\psi^*_{\mep}}(x, \mep). 
\end{equation}
Note that for any $\delta \in (0,1)$, $T_{\high}(\delta) \geq 1$ can be re-written as
\begin{equation}
    T_{\high}(\delta) := \inf\left\{t  \geq 1: \frac{c \log(1/\delta)}{t} + \frac{2c \log \left(1+\log_{c}t\right)}{t} \leq D_{\psi^*}^*(\mep, \mep_0) \right\}.
\end{equation}
Then, by the strict convexity of $x \mapsto D_{\psi^*_{\mep}}(x, \mep)$ and $x \mapsto D_{\psi^*_{\mep_0}}(x, \mep_0)$ with
\[
D_{\psi^*_{\mep}}(\mep, \mep) = 0 = D_{\psi^*_{\mep_0}}(\mep_0, \mep_0),
\]
we have the following lemma.
\begin{lemma} \label{lemma::exchange_mu}
	Suppose $\mep > \mep_0 \in \Mep$. Then, for any $z \in \Mep$, we have
	\[
	D_{\psi^*_{\mep_0}} (z, \mep_0)\mathbbm{1}(z > \mep_0) ~ < ~ \frac{c \log(1/\delta)}{T_{\high}(\delta)} + \frac{2c  \log\left(1 + \log_c T_{\high}(\delta) \right)}{T_{\high}(\delta)},
	\]
	if and only if
	\[
	D_{\psi^*_{\mep}} (z, \mep)\mathbbm{1}(z < \mep)  ~ > ~ \frac{c \log(1/\delta)}{T_{\high}(\delta)} + \frac{2c \log\left(1 + \log_c T_{\high}(\delta) \right)}{T_{\high}(\delta)}.
	\]
\end{lemma}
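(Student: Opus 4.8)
The plan is to reduce the claimed equivalence to a single statement about the location of the crossing point $z^*$. First I would fix notation, writing $\tau := T_{\high}(\delta)$, $a(z) := \D(z,\mep_0)$, $b(z) := D_{\psi^*_{\mep}}(z,\mep)$, and denoting by $R$ the common right-hand side
\[
R := \frac{c\log(1/\delta)}{\tau} + \frac{2c\log(1+\log_c\tau)}{\tau}
\]
appearing in both displays. The crucial first observation is that $R$ equals the divergence $D_{\psi^*}^{*}(\mep,\mep_0)$. Indeed, using $\log_c(c\tau) = 1 + \log_c\tau$, the map $t\mapsto L(t) := \frac{c\log(1/\delta)}{t} + \frac{2c\log(1+\log_c t)}{t}$ is continuous on $[1,\infty)$ with $L(t)\to 0$ as $t\to\infty$, so whenever $\tau>1$ the definition of $T_{\high}(\delta)$ as an infimum forces $L(\tau) = D_{\psi^*}^{*}(\mep,\mep_0)$ by continuity: one has $L(t)>D_{\psi^*}^{*}(\mep,\mep_0)$ for $t\in[1,\tau)$ and $L(\tau)\le D_{\psi^*}^{*}(\mep,\mep_0)$, and the left limit then pins down the equality. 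Hence $R = D_{\psi^*}^{*}(\mep,\mep_0)$.

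Next I would invoke the definition of $D_{\psi^*}^{*}(\mep,\mep_0)$ recalled at the start of the proof: the point $z^*$ is the unique solution of $a(z)=b(z)$ and $D_{\psi^*}^{*}(\mep,\mep_0) = a(z^*) = b(z^*)$. Combined with the previous step this gives the single identity $a(z^*)=b(z^*)=R$. I would then show $\mep_0<z^*<\mep$. This follows from strict convexity of $a$ and $b$ (guaranteed by the standing assumptions on $\psi^*_{\mep}$): $a$ attains its minimum value $0$ only at $\mep_0$ and $b$ attains its minimum value $0$ only at $\mep$, so $a(\mep_0)=0<b(\mep_0)$ while $a(\mep)>0=b(\mep)$; the intermediate value theorem places the crossing inside $(\mep_0,\mep)$, and on that interval $a$ is strictly increasing while $b$ is strictly decreasing, which simultaneously re-derives uniqueness of $z^*$. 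In particular $R=a(z^*)=b(z^*)>0$.

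The conclusion is then a monotonicity bookkeeping with the two indicator-truncated divergences. Writing $\phi(z):= a(z)\mathbbm{1}(z>\mep_0)$, I note that $\phi\equiv 0$ on $(-\infty,\mep_0]$ and $\phi=a$ is strictly increasing on $(\mep_0,\infty)$; since $R=a(z^*)>0$ and $z^*>\mep_0$, this yields $\phi(z)<R \iff z<z^*$. Symmetrically, with $\chi(z):= b(z)\mathbbm{1}(z<\mep)$, I have $\chi\equiv 0$ on $[\mep,\infty)$ and $\chi=b$ strictly decreasing on $(-\infty,\mep)$; since $R=b(z^*)>0$ and $z^*<\mep$, this yields $\chi(z)>R \iff z<z^*$. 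The two conditions are thus each equivalent to $z<z^*$, hence to one another, which is exactly the claimed equivalence.

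The step I expect to be most delicate is the identity $R = D_{\psi^*}^{*}(\mep,\mep_0)$: it is the hinge that makes both level sets collapse onto $\{z<z^*\}$, and it relies on the equality case of the infimum defining $T_{\high}(\delta)$. If $\tau=1$ (the uninteresting regime where $c\log(1/\delta)\le D_{\psi^*}^{*}(\mep,\mep_0)$) this may hold only as $R\le D_{\psi^*}^{*}(\mep,\mep_0)$, in which case a nonempty region $(z_a,z_b)$ would violate the equivalence; I would dispatch that boundary case separately, noting it forces $T_{\high}(\delta)=1$ and makes the surrounding high-probability bound trivial, so the equivalence is genuinely needed only for $\tau>1$.
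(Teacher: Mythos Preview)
Your proof is correct and follows essentially the same route the paper sketches: the paper's one-line justification (``by the strict convexity of $x\mapsto D_{\psi^*_\mep}(x,\mep)$ and $x\mapsto D_{\psi^*_{\mep_0}}(x,\mep_0)$ with both vanishing at their respective centers'') is precisely your monotonicity reduction to the crossing point $z^*$. Your explicit derivation of the identity $R = D_{\psi^*}^{*}(\mep,\mep_0)$ via continuity at the infimum defining $T_{\high}(\delta)$, and your flagging of the degenerate $\tau=1$ boundary case, are more careful than what the paper makes explicit.
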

Based on \cref{lemma::exchange_mu} with the definition of $\st$, we have
\begin{align*}
    &\mathbb{P}_{\mep}\left(N  > T_{\high}(\delta)\right) \\
    & = \mathbb{P}_{\mep}\left(	D_{\psi^*_{\mep_0}} (\bar{X}_{T_{\high}(\delta)}, \mep_0) <\frac{c \log(1/\alpha)}{T_{\high}(\delta)} + \frac{2c  \log\left(1 + \log_c T_{\high}(\delta) \right)}{T_{\high}(\delta)}~~\text{or}~~\bar{X}_{T_{\high}(\delta)} \leq \mep_0\right) \\
    & = \mathbb{P}_{\mep}\left(	D_{\psi^*_{\mep_0}} (\bar{X}_{T_{\high}(\delta)}, \mep_0) \mathbbm{1}\left(\bar{X}_{T_{\high}(\delta)} > \mep_0\right) <\frac{c \log(1/\alpha)}{T_{\high}(\delta)} + \frac{2c  \log\left(1 + \log_c T_{\high}(\delta) \right)}{T_{\high}(\delta)}\right) \\
    & \overset{(i)}{\leq} \mathbb{P}_{\mep}\left(	D_{\psi^*_{\mep_0}} (\bar{X}_{T_{\high}(\delta)}, \mep_0) \mathbbm{1}\left(\bar{X}_{T_{\high}(\delta)} > \mep_0\right) <\frac{c \log(1/\delta)}{T_{\high}(\delta)} + \frac{2c  \log\left(1 + \log_c T_{\high}(\delta) \right)}{T_{\high}(\delta)}\right) \\
    & \overset{(ii)}{=} \mathbb{P}_{\mep}\left(	D_{\psi^*_{\mep}} (\bar{X}_{T_{\high}(\delta)}, \mep) \mathbbm{1}\left(\bar{X}_{T_{\high}(\delta)} < \mep\right) >\frac{c \log(1/\delta)}{T_{\high}(\delta)} + \frac{2c  \log\left(1 + \log_c T_{\high}(\delta) \right)}{T_{\high}(\delta)}\right) \\
    & \leq \mathbb{P}_{\mep}\left(\exists \n \geq 1: \n D_{\psi^*_{\mep}}\left(\bar{X}_\n, \mep \right)\mathbbm{1}\left(\bar{X}_\n < \mep\right) \geq c \left[\log(1/\delta) + 2\log(1+ \log_c n)\right]\right)\\
    & = \mathbb{P}_{\mep}\left(\exists \n \geq 1: \bar{X}_\n < \mep,~~ \n D_{\psi^*_{\mep}}\left(\bar{X}_\n, \mep\right) \geq c \left[\log(1/\delta) + 2\log(1+ \log_c \n)\right]\right) \\
    & \overset{(iii)}{\leq} \delta,
\end{align*}
where the inequality~$(i)$ comes from the condition $\delta \in (0, \alpha)$, the equality~$(ii)$ results from \cref{lemma::exchange_mu} and the inequality~$(iii)$ is a consequence of \cref{eq::GLR_test_ineq_no_sep}. This proves the first claimed inequality~\eqref{eq::T_high_bound-sub-psi}.

To derive the explicit upper bound in \cref{eq::T_high_bound-sub-psi_explicit}, first note that, by continuity, $T_{\high}(\delta)$ can be re-defined as
\begin{equation}
    T_{\high}(\delta) = 1 \vee \sup\left\{t \geq 1: D_{\psi^*}^*(\mep, \mep_0) \leq \frac{c \log(1/\delta)}{t} + \frac{2c \log \left(1+\log_{c}t\right)}{t} \right\}.
\end{equation}
Hence, from \cref{lem::upper_bound_cal} with $d = D_{\psi^*}(\mep, \mep_0), w = \delta$ and $m = 1$, we have
\begin{equation}
    T_{\high}(\delta) \leq 1\vee \frac{2c}{D_{\psi^*}^*(\mep,\mep_0)} \left[\log \left( 4 + 2\log_{c}\left(2 / \log c\right) + 2 \left[\log_{c}\left(1/D_{\psi^*}^*(\mep,\mep_0) \right)\right]_+ \right) +  \log(1/\delta)\right],
\end{equation}
which implies the claimed inequality~\eqref{eq::T_high_bound-sub-psi_explicit}. 

Finally, to prove the upper bound on the expected sample size in \eqref{eq::T_ave_bound_no_sep}, let $M$ be a random variable defined by
\begin{equation} \label{eq::def_of_M}
M := \frac{D_{\psi^*}^*(\mep,\mep_0)}{2c}[N_\GLR(g_\alpha^c,\mep_0) - 1]  +\log \left( 4 + 2\log_{c}\left(2 / \log c\right) + 2 \left[\log_{c}\left(1/D_{\psi^*}^*(\mep,\mep_0) \right)\right]_+ \right).
\end{equation}
Then, from the high probability bound in \eqref{eq::T_high_bound-sub-psi_explicit}, we have
\begin{equation}
    \mathbb{P}_\mep\left(M > \log(1/\delta)\right) \leq \delta,
\end{equation}
for any $\delta \in (0,\alpha)$ which implies
\begin{align*}
    \mathbb{E}_\mep M &= \int_{0}^\infty \mathbb{P}_\mep\left(M > \epsilon\right) \mathrm{d}\epsilon \\
    & \leq \log(1/\alpha) + \int_{\log(1/\alpha)}^\infty e^{-\epsilon} \mathrm{d}\epsilon \\
    & = \log(1/\alpha) + \alpha.
\end{align*}
From the definition of the random variable $M$ in \eqref{eq::def_of_M}, we have the claimed upper bound on the expected sample size $\mathbb{E}_\mep [N_\GLR(g_\alpha^c,\mep_0)]$ in \eqref{eq::T_ave_bound_no_sep}, as desired. 
\end{proof}

\subsection{Proof of Theorem~\ref{thm::CS_simple}}
\label{append::proof_of_CS_simple}

In the setting of \cref{thm::CS_simple}, we have $\nminbar = \nmin$ since $\n_0 \leq \nmin$. Also, by the definition of $\mep_1$, we have $\D(\mep_1, \mep_0) < g / \nminbar$. Recall that, in the proof of \cref{lem::relationships}, we proved that if $\D(\mep_1, \mep_0) < g / \nminbar$ then the inequality $ N_{\DM}(\eta) \leq N_{\C}(\eta)$ holds for each $\eta > 1$ where $N_{\DM}$ is the discrete mixture-crossing time defined in \cref{lem::relationships} and $N_{\C}(\eta)$ is given by
\begin{equation}
    N_C(\eta):= \inf\left\{\n \geq \nmin: \sup_{z_1 > \mep_1}\log \left(\LR_\n(z_1,\mep_0)\vee 1\right)\geq g\left( n \wedge \nminbar\eta^{K_\eta} \right)\right\},
\end{equation}
for each $\eta > 1$. Therefore, by applying the Ville's maximal inequality to the above inequality $ N_{\DM}(\eta) \leq N_{\C}(\eta)$ for the constant boundary case, we have the following upper bound on the curve-crossing probability:
\begin{equation} \label{eq::const_bound_nmin}
\mathbb{P}_{\mep_0} \left(\exists \n \geq \nmin:   \sup_{z_1 > \mep_1}\log \left(\LR_\n(z_1,\mep_0)\vee 1\right) \geq g\right) \\
  \leq   \inf_{\eta >1} \left\lceil \log_\eta \left(\frac{g/\nmin}{\D(\mep_1,\mep_0)}\right)\right\rceil e^{-g / \eta},
\end{equation}
for any given $\mep_1 \geq \mep_0$ and $\nmin \geq \n_0$.  

Now, to prove the inequality in \cref{thm::CS_simple}, let us first consider the case $\nmin = \n_0$. if $\nmin = \n_0$ then the boundary crossing event in \eqref{eq::const_bound_general} is a subset of the GLR-like curve-crossing event as following shows: 
\begin{align*}
  \left\{\exists \n \geq 1:   \sup_{z \in (\mep_1, \mep_2)}\log \left(\LR_\n(z,\mep_0)\vee 1\right) \geq g\right\}
    &\subset\left\{\exists n \geq \n_0 : \left(\bar{X}_\n, \frac{g}{\n}\right) \in R(\mep_0)\cap H(\mep_1, \mep_0) \right\}\\
    &=  \left\{\exists n \geq \n_0 : \left(\bar{X}_\n, \frac{g}{\n}\right) \in R(\mep_1,\mep_0)  \right\} \\
    & =  \left\{\exists n \geq \n_0 :   \sup_{z > \mep_1}\log \left(\LR_\n(z,\mep_0)\vee 1\right) \geq g\right\},
\end{align*}
where the first inclusion comes from the following fact:
 \begin{equation}
     \left\{\exists \n \in [1, \n_0): \sup_{z \in (\mep_1, \mep_2)}\log \left(\LR_\n(z,\mep_0)\vee 1\right) \geq g\right\} = \emptyset,
 \end{equation}
 which is based on the definition of $\n_0 := \inf\left\{\n \in \mathbb{N}:  \sup_{z\in \Mep, z > \mep_0 }\D(z ,\mep_0) \geq g/ \n \right\}$ with the fact $\bar{X}_\n \in \overline{\Mep}$ and
 \begin{equation}
 \frac{1}{\n}\sup_{z \in (\mep_1, \mep_2)}\log \left(\LR_\n(z,\mep_0)\vee 1\right) \leq \sup_{z\in \Mep, z > \mep_0 }\D(z ,\mep_0).    
 \end{equation}
 
Therefore, from the inequality~\eqref{eq::const_bound_nmin}, with the setting $\nmax = g / \D(\mep_1, \mep_0)$, we have the claimed upper bound in \eqref{eq::const_bound_general}. 

To prove the theorem for the $\nmin  > \n_0$ case, note that the boundary crossing probability in \eqref{eq::const_bound_general} can be upper bounded as follows:
\begin{align*}  &\mathbb{P}_{\mep_0} \left(\exists \n \geq 1:   \sup_{z \in (\mep_1, \mep_2)}\log \left(\LR_\n(z,\mep_0)\vee 1\right) \geq g\right)\\
    & \leq \mathbb{P}_{\mep_0}\left(\exists n \geq \n_0 : \left(\bar{X}_\n, \frac{g}{\n}\right) \in  H(\mep_2, \mep_0)\cap R(\mep_0)\cap H(\mep_1, \mep_0) \right)\\
    & \leq   \mathbb{P}_{\mep_0}\left(\exists n \in [\n_0,  \nmin) : \left(\bar{X}_\n, \frac{g}{\n}\right) \in H(\mep_2,\mep_0)  \right) + \mathbb{P}_{\mep_0}\left(\exists n \geq \nmin : \left(\bar{X}_\n, \frac{g}{\n}\right) \in R(\mep_1,\mep_0)  \right) \\
    &\leq   \mathbb{P}_{\mep_0}\left(\exists n \geq \n_0 : \left(\bar{X}_\n, \frac{g}{\n}\right) \in H(\mep_2,\mep_0)  \right) + \mathbb{P}_{\mep_0}\left(\exists n \geq \nmin : \left(\bar{X}_\n, \frac{g}{\n}\right) \in R(\mep_1,\mep_0)  \right) \\
    &\leq   e^{-g} + \mathbb{P}_{\mep_0}\left(\exists n \geq \nmin :  \sup_{z > \mep_1}\log \left(\LR_\n(z,\mep_0)\vee 1\right)  \geq g  \right) \\
    & \leq e^{-g} + \inf_{\eta >1} \left\lceil \log_\eta \left(\frac{\nmax}{\nmin}\right)\right\rceil e^{-g / \eta},
\end{align*}
which proves the claimed bound in \cref{thm::CS_simple}. The \cref{cor::CS_multi} is a direct consequence of \cref{thm::CS_simple} based on the union bound argument.

\subsection{Proof of Corollary~\ref{cor::dis_mixture} }
The coverage guarantee in \cref{eq::dis_mixture_CI_coverage} is directly comes from the consequence of the Ville's inequality in \eqref{eq::discrete_mixture_inequality}. To prove $\CI_\n^M \subset \CI_\n$ for each $\n \in \mathbb{N}$, it is enough to show the following inclusion holds:
\begin{equation}
\begin{aligned}
    &\left\{\exists \n \geq 1:   \sup_{z \in (\mep_1, \mep_2)}\log \left(\LR_\n(z,\mep_0)\vee 1\right) \geq g_\alpha \right\} \\
    &\subset \left\{\exists \n \geq 1:   e^{-g_\alpha}  \mathbbm{1}\left(\nmin >  \n_0\right) \LR_\n\left(z_0(\mep_0), \mep_0\right) 
+  e^{-g_\alpha / \eta_\alpha}\sum_{k=1}^{K_\alpha}\LR_\n\left(z_k(\mep_0), \mep_0\right) \geq 1\right\},
\end{aligned}
\end{equation}
for each $\mep_0 \in \Mep$. If $\nmin = n_0$ then this inclusion is a direct consequence of \cref{lem::relationships}. For the $\nmin > \n_0$ case, as shown in the proof of \cref{thm::CS_simple}, we have 
\begin{align*}
    &\left\{\exists \n \geq 1:   \sup_{z \in (\mep_1, \mep_2)}\log \left(\LR_\n(z,\mep_0)\vee 1\right) \geq g_\alpha \right\} \\
    & \subset \left\{\exists n \geq \n_0 : \left(\bar{X}_\n, \frac{g_\alpha}{\n}\right) \in H(\mep_2,\mep_0)  \right\} \cup \left\{ n \geq \nmin : \left(\bar{X}_\n, \frac{g_\alpha}{\n}\right) \in R(\mep_1,\mep_0)  \right\} \\
    & \subset \left\{\exists n \in [n_0, \nmin) : \log \LR_\n(\mep_2,\mep_0)  \geq g_\alpha   \right\} \cup \left\{ n \geq \nmin : \sup_{z > \mep_1}\log \left(\LR_\n(z,\mep_0)\vee 1\right)  \geq g_\alpha \right\} \\
    & \subset \left\{\exists n \in [n_0, \nmin) : e^{-g_\alpha} \LR_\n(z_0(\mep_0),\mep_0)  \geq 1   \right\} \cup \left\{ n \geq \nmin : e^{-g_\alpha / \eta_\alpha}\sum_{k=1}^{K_\alpha}\LR_\n\left(z_k(\mep_0), \mep_0\right) \geq 1 \right\} \\
       &\subset \left\{\exists \n \geq 1:   e^{-g_\alpha}\LR_\n\left(z_0(\mep_0), \mep_0\right) 
+  e^{-g_\alpha / \eta_\alpha}\sum_{k=1}^{K_\alpha}\LR_\n\left(z_k(\mep_0), \mep_0\right) \geq 1\right\},
\end{align*}
where the second last inclusion comes from \cref{lem::relationships} with the fact $\mep_2 = z_0(\mep_0)$ by the definition.

\section{Proofs of propositions and miscellaneous statements}
\label{appen::proof_of_props}
\subsection{Proof of Proposition~\ref{prop::ef-like-convex}}
For simplicity, we only prove the case $\Mep = \mathbb{R}$.  However, the proof can be straightforwardly extended to general open convex subset $\Mep \subset \mathbb{R}$. First note that since $\nabla B$ is an increasing function with the fact $\nabla B^* = \left(\nabla B\right)^{-1}$, we know that $\nabla B^*$ is a concave function from the convexity assumption on $\nabla B$.

For a given EF-like sub-$B$ family of distributions, the normalized log LR-like statistic 
is
\begin{equation}
\begin{aligned}
\frac{1}{\n} \log\LR_\n(\mep_1,\mep_0) 
&=  D_{B^*}(\mep_1, \mep_0) + \nabla_z D_{B^*}(z, \mep_0)\mid_{z = \mep_1} (\bar{X}_\n  -\mep_1) \\
&= d + \left[\nabla B^* (\mep_1) - \nabla B^*(\mep_0)\right] (\bar{X}_\n  -\mep_1),
\end{aligned}
\end{equation}
where the first equality comes from the fact $\D(\mep_1,\mep_0) = D_{B^*}(\mep_1, \mep_0)$ for  EF-like sub-$B$ distributions and the second equality comes from the definition of $d$ in \cref{prop::ef-like-convex} and $D_{B^*}(z, \mep_0) = B^*(z) - B^*(\mep_0) - \nabla B^*(\mep_0)(z-\mep_0)$. 

Now, for a given data, let $f(\mep_0):=  d + \left[\nabla B^* (\mep_1) - \nabla B^*(\mep_0)\right] (\bar{X}_\n  -\mep_1)$. To show the mapping $\mep_0 \mapsto \LR_\n (\mep_1,\mep_0)$ is non-increasing on $(-\infty, \bar{X}_\n]$, it is enough to prove $\nabla f(\mep_0) \leq 0$ for all $\mep_0 \leq \bar{X}_\n$. Since $\mep_1$ is a function of $\mep_0$ via the equation $d =  B^*(\mep_1) - B^*(\mep_0) - \nabla B^*(\mep_0)(\mep_1-\mep_0)$, by taking derivative with respect to $\mep_0$ on the both sides, we have
\begin{align*}
    &0 = \nabla B^*(\mep_1) \frac{\partial \mep_1}{\partial \mep_0} - B^*(\mep_0) - \nabla B^*(\mep_0)\left(\frac{\partial \mep_1}{\partial \mep_0} - 1\right) - \nabla^2B^*(\mep_0)(\mep_1 - \mep_0) \\
    &\Longleftrightarrow \nabla^2B^*(\mep_0) = \frac{\partial \mep_1}{\partial \mep_0}\left[\frac{\nabla B^* (\mep_1) - \nabla B^*(\mep_0)}{\mep_1 - \mep_0}     \right].
\end{align*}
From the above observation, we can express $\nabla f(\mep_0)$ as follows:
\begin{align}
    \nabla f(\mep_0) &= \left[\nabla^2 B^*(\mep_1)\frac{\partial \mep_1}{\partial \mep_0} -  \nabla^2 B^*(\mep_0)\right]\left(\bar{X}_\n - \mep_1\right) - \left[\nabla B^* (\mep_1) - \nabla B^*(\mep_0)\right] \frac{\partial \mep_1}{\partial \mep_0} \nonumber \\
    &=  \nabla^2 B^*(\mep_1) \left(\bar{X}_\n - \mep_1\right)\frac{\partial \mep_1}{\partial \mep_0} - \nabla^2 B^* (\mep_0) \left(\bar{X}_\n - \mep_0\right) \label{eq::nabla_f_1} \\
    &= \left[\nabla^2 B^*(\mep_1) \frac{\partial \mep_1}{\partial \mep_0} - \nabla^2 B^2(\mep_0)\right]\left(\bar{X}_\n - \mep_1\right) - \nabla^2 B^*(\mep_0)\left(\mep_1 - \mep_0\right) \nonumber \\
    &=  \left[\nabla^2 B^*(\mep_1) - \frac{\nabla B^* (\mep_1) - \nabla B^*(\mep_0)}{\mep_1 - \mep_0} \right] \left(\bar{X}_\n - \mep_1\right)\frac{\partial \mep_1}{\partial \mep_0} - \nabla^2 B^*(\mep_0)\left(\mep_1 - \mep_0\right).\label{eq::nabla_f_2}
\end{align}
 For the $\mep_0 \leq \bar{X}_\n < \mep_1 $ case, since $\nabla^2B^* \geq 0$ and $ \frac{\partial \mep_1}{\partial \mep_0} \geq 0$, we can check $\nabla f(\mep_0) \leq 0$ from the expression~\eqref{eq::nabla_f_1}. Similarly, for the $\bar{X}_\n \geq \mep_1$ case, from the expression~\eqref{eq::nabla_f_2}, we have $\nabla f(\mep_0) \leq 0$ since $\nabla^2 B^*(\mep_1) - \frac{\nabla B^* (\mep_1) - \nabla B^*(\mep_0)}{\mep_1 - \mep_0} \leq 0$ from the concavity of $\nabla B^*$. Therefore, we proved the mapping  $\mep_0 \mapsto \LR_\n (\mep_1,\mep_0)$ is non-increasing on $(-\infty, \bar{X}_\n]$.


\subsection{Proof of the inequality~(\ref{eq::const_g_upper_bound})}
For a simple notation, we denote $\D(\mep_1,\mep_0)$ by $D$ throughout this proof. For any fixed $\eta > 1$, the right hand side of \eqref{eq::const_bound} is upper bounded by $
\left[ 1+ \log_\eta \left(\frac{g}{D} \vee 1\right)\right] e^{-g / \eta} 
$. Therefore, $g_\alpha(\mep_1,\mep_0)$ is upper bounded by any constant $g > 0$ such that
\begin{equation} \label{eq::const_g_upper_inner_bound}
    \left[ 1+ \log_\eta \left(\frac{g}{D} \vee 1\right)\right] e^{-g / \eta}  \leq \alpha ~~\Longleftrightarrow~~ g \leq \eta \log\left(\frac{\log\left(\eta (\frac{g}{D} \vee 1)\right)}{\alpha \log \eta} \right).
\end{equation}
From the fact $\log x \leq \min\left\{\sqrt{x},  x \right\},~~\forall x >0$, the above inequality further implies
\begin{align*}
   &g \leq \eta \log\left(\frac{\log\left(\eta(\frac{g}{D} \vee 1
   )\right)}{\alpha \log \eta} \right) \\
   & \Rightarrow  g \leq \frac{\eta\sqrt{\eta}\sqrt{\frac{g}{D} \vee 1}}{\alpha\log\eta}\\
   & \Rightarrow  \sqrt{\frac{g}{D}} \leq \frac{\eta\sqrt{\eta}}{\alpha D\log\eta} \vee 1.
\end{align*}
By plugging-in the last inequality into \eqref{eq::const_g_upper_inner_bound}, we have
\begin{equation}
    g \leq \eta\log(1/\alpha) + \eta\log\left(1 + 2\log_\eta \left(\frac{\eta\sqrt{\eta}}{\alpha D\log\eta}\vee 1\right)\right).
\end{equation}
Since this inequality holds for any $\eta > 1$, it proves the claimed inequality \eqref{eq::const_g_upper_bound}, as desired.

\subsection{Proof of the inequality~(\ref{eq::multiple_source_bound})}

This proof argument mainly follows the proof of Proposition 33.9 in \cite{lattimore2020bandit} in which a similar inequality was proved for the case $h^a(u) = \log(1/u)$ for each $a \in [K]$. Here, we show how to prove a similar bound for general $h^a$ functions.   

For each $a \in [K]$, define a random variable $W_a$ as follows:
\begin{equation}
    W_a := \sup\left\{\alpha \in (0,1] : \log\GLR_t^a(\mep_1^a, \mep_0^a) < f^a\left(N_a(t)\right)  + h^a(\alpha),~~\forall t \geq 0  \right\}.
\end{equation}
Then, from the inequality~\eqref{eq::bound_for_each_a},  we have 
\begin{align*}
    &\mathbb{P}_0\left(W_a < x \right) \\
    & =    \mathbb{P}_0\left(\exists t \geq 0: \log\GLR_t^a(\mep_1^a, \mep_0^a) \geq f^a\left(N_a(t)\right) + h^a(x)\right) \\
    &\leq x = \mathbb{P}_0 \left(U_a < x\right),~~\forall x \in (0,1], 
\end{align*}
which implies that each $W_a$ is first-order stochastically dominant over $U_a$. Since each $h^a$ is non-increasing and $W_a$ and $U_a$ are independent random variables, $\sum_{a=1}^K h^a(U_a)$ is first-order stochastically dominant over $\sum_{a=1}^K h^a(W_a)$. Therefor, for any $\epsilon > 0$, we have 
\begin{align*}
    &\mathbb{P}_0\left(\exists t \geq 0: \sum_{a=1}^K\log\GLR_t^a(\mep_1^a, \mep_0^a)\geq \sum_{a=1}^K  f^a\left(N_a(t)\right)  + \epsilon\right)\\
    &\leq \mathbb{P}_0\left(\sum_{a=1}^K h^a(W_a) \geq \epsilon\right)\\
    & \leq \mathbb{P}_0\left(\sum_{a=1}^K h^a(U_a) \geq \epsilon\right),
\end{align*} 
 which proves the claimed inequality in \eqref{eq::multiple_source_bound}. 
 
 Note that if each $h^a(u) = \log(1/u)$ then we can also use the following explicit upper bound as Proposition 33.9 in \cite{lattimore2020bandit} shown:
 \begin{equation}
  \mathbb{P}_0\left(\sum_{a=1}^K \log(1/W_a) \geq \epsilon\right) \leq  \left(\frac{\epsilon}{K}\right)^K \exp(K-\epsilon),
 \end{equation}
 which is based on the fact $\mathbb{P}_0\left(\log(1/W_a) \geq \epsilon\right) \leq e^{-\epsilon}$ for each $a \in [K]$ and $\epsilon > 0$.

\section{A practical method to design a test for a given power constraint.} \label{appen::simulation_study}
Open-ended tests such GLR-like and discrete mixture ones are often used in the cases in which we wish to keep monitoring the data stream if it appears to follow the null (or safe zone) and want to stop the data stream if it instead the alternative hypothesis fit the data better (the dangerous zone). For example, suppose each sample in the data stream represents each user's dissatisfaction after implementing a new feature into an online service. If it looks like there are only few dissatisfied users ($\mep \sim 0$) then we can keep running the service with the new feature. Otherwise, if a significant fraction of users are not satisfied by the new feature ($\mep \gg 0$), we need to stop to deploy the new feature and consider some alternative approaches.
Because of the possibility of indefinite sampling, open-ended tests have power of 1 against alternatives. However, in practice, we often use a testing procedure which has a power less than 1 to reduce the sample size. Also, in most case, there exists a termination time for experimentation. Therefore, even if we are running an open-ended test, it has a practical benefit to consider the upper bound on the maximum sample size and to design the open-ended test accordingly.   

With this motivation in mind, in this appendix, we describe a heuristic but practical method to  design SGLR-like and discrete mixture test which has an upper bound on the sample size while maintaining large enough power to detect the alternative for exponential family distributions. For many exponential family distributions, there exist standard tests with fixed sample sizes in which we can compute the minimum fixed sample size to achieve a prespecified power $1-\beta$ while controlling the type-1 error by $\alpha$.

Concretely, let $n^*$ be the minimum sample size for a  fixed sample size test to satisfy the type I and type II error constraints with values $\alpha, \beta \in (0,1)$ respectively, for the following one-sided testing problem:
\begin{equation}
    H_0 : \mep \leq \mep_0 ~~\text{vs}~~H_1 : \mep > \mep_1,
\end{equation}
for some $\mep_0 \leq \mep_1$. For example, in the Gaussian with known variance setting, the minimum sample size of the Z-test is given by 
\begin{equation}
    n^* = \inf\left\{n \in \mathbb{N}: \mathbb{P}_{\mep_1}\left(\bar{X}_n \geq \mep_0 + z_\alpha \frac{\sigma}{\sqrt{n}}\right) \geq 1-\beta\right\}.
\end{equation}
Similarly, for the Bernoulli sequence, the minimum sample size of the exact binomial test is given by
\begin{equation}
    n^* = \inf\left\{n \in \mathbb{N}:  \mathbb{P}_{\mep_1}\left(\sum_{i=1}^n X_i \geq k_\alpha(n)\geq 1-\beta \right)\right\},
\end{equation}
where $k_\alpha(n) \in [0, n]$ is the smallest integer such that  $\mathbb{P}_{\mep_0}\left(\sum_{i=1}^n X_i \geq k_\alpha(n)\right) \leq \alpha$.

Now, by using the minimum sample size of the standard test as a reference, we can design an efficient sequential testing procedure which controls the type-1 error at the same level but can detect, on average, the alternative even faster than the standard test.
To construct SGLR-like and discrete mixture tests, instead of using the boundary of alternative $\mep_1$ as we did in \cref{sec::GLR-like test}, based on the minimum sample size $n^*$ of the standard test, we set the target interval as $\nmin := \lceil n^* / 10\rceil$ and $\nmax := 2 n^*$, and use the confidence sequence approach in \cref{sec::CS} to build sequential testing procedures. The intuition behind this design of sequential testing procedure is as follows: first, we may not be able to detect the alternative significantly faster than in the standard test which is often an optimal fixed sample sized test. Therefore, instead of using $\nmin = 1$, we set the lower bound of the target interval as the $10\%$ of the sample size $n^*$ of the standard test to increase the sample efficiency. Also, since we are aiming to achieve a power strictly less than 1, instead of running the open-ended test indefinitely, we can stop the testing procedure at the same order of the minimum sample size of the standard test. Since the near-optimal sequential test will have the average sample size at most as large as the minimum sample size of the standard test, we set the upper bound of the target interval as the double size of $n^*$ to allow some random fluctuation around the minimum sample size of the standard test. 

Although this is a heuristic approach to design SGLR-like and discrete mixture tests, we have observed that these ``upper bounded'' sequential tests outperform even some optimal fixed sample size tests such as Z-test for Gaussian random variables and exact binomial test for Bernoulli sequences empirically. To demonstrate the effectiveness of this approach, below we conduct two simulation studies.

{\bf Simulation 1. Gaussian random variables with $\sigma = 1$}

In this simulation, we test the following one-sided problem:
\begin{equation}
    H_0: \mu \leq 0~~\text{vs}~~H_1: \mu > 0.1.
\end{equation}
The Z-test is to reject the null if $\bar{X}_n \geq z_\alpha / \sqrt{n}$ for a fixed $n$. We set $\alpha  = \beta = 0.1$. In this case, the minimum sample size of the Z-test is given by $n^* := 657$, and we used this sample size to conduct the Z-test. Based on the design method above, we set the target interval for SGLR-like and discrete mixture test as $[\lceil n^* / 10\rceil, 2 n^*] = [66, 1314]$. Each sequential testing procedure runs up to $n = 2 n^*$. We run Z-test and both sequential tests $2000$  times for Gaussian data streams where the underlying mean is equal to one of $(-0.1, -0.05, \dots, 0.2)$. 

\begin{table}[ht]
    \caption{Estimated probabilities of rejecting the null hypothesis. (Gaussian)}
    \label{tab:G_reject}
  \centering
  \small
\begin{tabular}{rrrrrrr}
  \hline
  True hypothesis & True $\mu$ & Repeated Z-test (p-hacking) & SGLR & SGLR (Discrete mix.) & Z-test \\ 
  \hline
  \hline
\multirow{2}{*}{$H_0$ is true} &-0.05 & 0.42 & 0.00 & 0.02 & 0.00 \\ 
 &0.00 & 0.66 & 0.00 & 0.10 & 0.10 \\ 
 \hline
Neither $H_0$ nor $H_1$ &0.05 & 0.94 & 0.11 & 0.40 & 0.52 \\ 
 \hline
\multirow{3}{*}{$H_1$ is true} &0.10 & 1.00 & 0.66 & 0.89 & 0.90 \\ 
 &0.15 & 1.00 & 0.99 & 1.00 & 1.00 \\ 
 &0.20 & 1.00 & 1.00 & 1.00 & 1.00 \\ 
   \hline
\end{tabular}
\end{table}

\begin{table}[ht]
    \caption{Estimated average sample sizes of testing procedures. (Gaussian)}
    \label{tab:G_sample_size}
  \centering
  \small
\begin{tabular}{rrrrrrr}
  \hline
True hypothesis & True $\mu$ & SGLR & SGLR (Discrete mix.) & SPRT (oracle) &Z-test \\ 
   \hline
  \hline
\multirow{2}{*}{$H_0$ is true} & -0.05 & 1312.80 & 1284.72& & 657 \\ 
 &0.00 & 1310.93 & 1201.76 & & 657 \\ 
 \hline
 Neither $H_0$ nor $H_1$ & 0.05 &  1251.36 & 958.40 & & 657 \\
 \hline
 \multirow{3}{*}{$H_1$ is true} & 0.10 &  926.14 & 509.28 &449.33& 657 \\ 
& 0.15 &  488.84 & 222.06 &209.88& 657\\ 
& 0.20 &  280.21 & 127.01 &120.11& 657 \\ 
   \hline
\end{tabular}
\end{table}

\begin{table}[h!]
    \caption{Estimated probabilities of tests being stopped earlier than Z-test.}
    \label{tab:G_early_stop}
  \centering
  \small
\begin{tabular}{rrrrrrr}
  \hline
True hypothesis & True $\mu$ & SGLR & SGLR (Discrete mix.) & SPRT (oracle) \\ 
  \hline
     \hline
\multirow{2}{*}{$H_0$ is true} &  -0.05 & 0.00 & 0.02&  \\ 
  & 0.00 & 0.00 & 0.09& \\ 
   \hline
   Neither $H_0$ nor $H_1$ & 0.05 & 0.04 & 0.30& \\ 
   \hline
 \multirow{3}{*}{$H_1$ is true} &    0.10 & 0.28 & 0.68 &0.78\\ 
  & 0.15 & 0.75 & 0.95 & 0.97\\ 
  & 0.20 & 0.97 & 1.00 & 1.00\\ 
   \hline
\end{tabular}
\end{table}

Table~\ref{tab:G_reject}-\ref{tab:G_early_stop} summarizes the simulation result. In Table~\ref{tab:G_reject}, we show that, for each underlying true mean, how frequently each testing procedure rejects the null hypothesis. Here the column Repeated Z-test (p-hacking) represents the naive usage of Z-test as a sequential procedure in which we stop and reject the null whenever p-value of the Z-test goes below the level $\alpha$. This is an example of p-hacking which inflates the type-1 error significantly larger than the target level $\alpha$. From Table~\ref{tab:G_reject}, we can check that the Z-test with continuous monitoring of p-values yields a large type-1 error ($0.43$) even under a null distribution ($\mep = -0.5)$ which is a way from the boundary of the null ($\mep_0 = 0)$.  In contrast, for all other valid level $\alpha$ testing procedures, type-1 errors are controlled under the null distributions. 

For alternative distributions ($\mep > 0.1$), the Z-test with the minimum sample size $n^*$ has larger powers than the prespecified bound $1-\beta = 0.9$ as expected. Note that the discrete mixture based SGLR test achieves almost the same power compared to the Z-test. However, as we can check from Table~\ref{tab:G_sample_size}~and~\ref{tab:G_early_stop}, under the alternative distributions, the discrete mixture test detects the signal faster than the Z-test both on average and with high probability. As a reference, we also run the same simulation for the (oracle) SPRT where each test is designed for a simple null $H_0 : \mep = 0$ versus each true but unknown alternative hypothesis. This is an oracle test since it requires knowledge of the true underlying parameter which is not available in most practical scenarios. If the underlying true distribution belongs to one of two simple hypotheses, the corresponding SPRT is known to be the optimal test~\cite{wald1948optimum} thus we can treat its average sample size under alternative as the best possible sample size we can achieve. From  Table~\ref{tab:G_sample_size}~and~\ref{tab:G_early_stop}, we can check that the performance of the discrete mixture based SGLR test is comparable to the optimal oracle test although the former does not require the information about the exact true parameter but only requires its boundary.

The SGLR test yields a weaker power at the boundary of the alternative space but it achieves higher powers and smaller sample size as the underlying true means being farther away from the boundary. 
However, SGLR and discrete mixture tests do not always perform better than the Z-test. If the underlying true mean lies between boundaries of null and alternative spaces, both test have weaker power and requires more samples to detect the signal compared to the Z-test. Therefore, we recommend to set the boundary of the alternative conservatively in practice.

{\bf Simulation 2. Bernoulli random variables}

In this simulation, we test the following one-sided problem:
\begin{equation}
    H_0: \mu \leq 0.1~~\text{vs}~~H_1: \mu > 0.12
\end{equation}
For the Bernoulli case, we use the exact binomial test as the standard test based on a fixed sample size in which we reject the null if $\sum_{i=1}^n X_i \geq k_\alpha(n)$ where $k_\alpha(n) \in [0, n]$ is the smallest integer such that  $\mathbb{P}_{\mep_0}\left(\sum_{i=1}^n X_i \geq k_\alpha(n)\right) \leq \alpha$. As we did for the Gaussian case, we set $\alpha  = \beta = 0.1$. In this case, the minimum sample size of the exact binomial test is given by $n^* := 1644$, and we used this sample size to conduct the exact binomial test. Based on the design method above, we set the target interval for SGLR-like and discrete mixture test as $[\lceil n^* / 10\rceil, 2 n^*] = [165, 3290]$. Each sequential testing procedure runs up to $n = 2 n^*$. We run the exact binomial test and both sequential tests $2000$  times for data streams from the Bernoulli distribution where the underlying mean is equal to one of $(0.9, 0.1, \dots, 0.14)$.

\begin{table}[h!]
    \caption{Estimated probabilities of rejecting the null hypothesis. (Bernoulli)}
    \label{tab:ber_reject}
  \centering
  \small
\begin{tabular}{rrrrrrr}
  \hline
 True hypothesis & True $\mu$ & Repeated binom. (p-hacking) & SGLR & SGLR (Discrete mix.) & Exact binom. \\ 
  \hline
    \hline
  \multirow{2}{*}{$H_0$ is true} &0.09 & 0.34 & 0.00 & 0.02 & 0.00 \\ 
   &0.10 & 0.59 & 0.00 & 0.08 & 0.09 \\ 
  \hline
   Neither $H_0$ nor $H_1$&0.11 & 0.93 & 0.12 & 0.41 & 0.51 \\ 
   \hline
  \multirow{3}{*}{$H_1$ is true}   &0.12 & 1.00 & 0.69 & 0.90 & 0.90 \\ 
   &0.13 & 1.00 & 0.99 & 1.00 & 1.00 \\ 
   &0.14 & 1.00 & 1.00 & 1.00 & 1.00 \\ 
   \hline
\end{tabular}
\end{table}

\begin{table}[h!]
    \caption{Estimated average sample sizes of testing procedures. (Bernoulli)}
    \label{tab:ber_sample_size}
  \centering
  \small
\begin{tabular}{rrrrrrr}
  \hline
  True hypothesis &True $\mu$ & SGLR & SGLR (Discrete mix.)& SPRT (oracle) & Exact binom. \\ 
  \hline
    \hline
  \multirow{2}{*}{$H_0$ is true} &0.09 & 3290.00 & 3234.39 && 1645 \\ 
  & 0.10 & 3278.18 & 3069.49 && 1645 \\
   \hline
     Neither $H_0$ nor $H_1$ &0.11 & 3128.98 & 2354.69 && 1645 \\ 
   \hline
  \multirow{3}{*}{$H_1$ is true}   &0.12 & 2259.14 & 1203.76 &1060.18& 1645 \\ 
   &0.13 & 1235.90 & 558.26 &523.05& 1645 \\ 
   &0.14 & 701.95 & 308.51 & 292.87& 1645 \\ 
   \hline
\end{tabular}
\end{table}

\begin{table}[h!]
    \caption{Estimated probabilities of tests being stopped earlier than the exact binomial test.}
    \label{tab:ber_early_stop}
  \centering
  \small
\begin{tabular}{rrrrrrr}
  \hline
  True hypothesis  &True $\mu$ & SGLR & SGLR (Discrete mix.)& SPRT (oracle) \\ 
  \hline
    \hline
   \multirow{2}{*}{$H_0$ is true} &0.09 & 0.00 & 0.02& \\ 
  &0.10 & 0.00 & 0.07& \\ 
  \hline
   Neither $H_0$ nor $H_1$ &0.11 & 0.04 & 0.31& \\ 
  \hline
   \multirow{3}{*}{$H_1$ is true}  &0.12 & 0.30 & 0.71 & 0.81\\ 
  &0.13 & 0.73 & 0.95 &0.96\\ 
  &0.14 & 0.97 & 1.00& 1.00 \\  
   \hline
\end{tabular}
\end{table}

Table~\ref{tab:ber_reject}-\ref{tab:ber_early_stop} summarizes the simulation result. In all three tables, we can check the same pattern we observed from the Gaussian case. In Table~\ref{tab:ber_reject}, we show that, for each underlying true mean, how frequently each testing procedure rejects the null hypothesis. Here the column Repeated exact binomial (p-hacking) represents the naive usage of the exact test as a sequential procedure in which we stop and reject the null whenever p-value of the test goes below the level $\alpha$. As we observed before, the p-hacking inflates the type-1 error significantly larger than the target level $\alpha$. From Table~\ref{tab:ber_reject}, we can check that the exact binomial test with continuous monitoring of p-values yields a large type-1 error ($0.23$) even under a null distribution ($\mep = 0.09)$ which is a way from the boundary of the null ($\mep_0 = 0.1)$.  In contrast, for all other valid level $\alpha$ testing procedures, type-1 errors are controlled under the null distributions. 

For alternative distributions ($\mep > 0.12$), the exact binomial test with the minimum sample size $n^*$ has larger powers than the prespecified bound $1-\beta = 0.9$ as expected. Again, as same as the Gaussian case, the discrete mixture based SGLR test achieves almost the same power compared to the exact binomial test. However, as we can check from Table~\ref{tab:ber_sample_size}~and~\ref{tab:ber_early_stop}, under the alternative distributions, the discrete mixture test uses, on average and with a high probability, smaller numbers of samples to detect the signal than the exact binomial test with a fixed sample size. Also, we can see that the performance of the  SGLR test based on discrete mixtures is comparable to the optimal oracle test (SPRT), although the former does not require any information about the exact true parameter but only requires its boundary. The SGLR test yields a weaker power at the boundary of the alternative space but it achieves higher powers and smaller sample size as the underlying true means being farther away from the boundary. 

However, as same as the Gaussian case, SGLR and discrete mixture tests do not always perform better than the exact binomial test. If the underlying true mean lies between boundaries of null and alternative spaces, both test have weaker power and requires more samples to detect the signal compared to the exact binomial. Therefore, it is recommended to set the boundary of the alternative close to the boundary of the null in practice.

\end{document}